\DeclareMathOperator{\dom}{dom}
\DeclareMathOperator{\id}{id}
\DeclareMathOperator{\dist}{dist}
\DeclareMathOperator{\opt}{opt}
\DeclareMathOperator{\conv}{conv}
\DeclareMathOperator{\Lip}{Lip}
\DeclareMathOperator{\Tr}{Tr}
\newcommand{\fr}{\penalty-20\null\hfill$\blacksquare$}
\theoremstyle{plain}
\newtheorem{thm}{Theorem}[section]
\newtheorem*{thm*}{Theorem}
\newtheorem{lem}[thm]{Lemma}
\newtheorem{prop}[thm]{Proposition}
\newtheorem{cor}[thm]{Corollary}
\theoremstyle{remark}
\newtheorem{oss}[thm]{Remark}
\newtheorem{ex}[thm]{Example}
\numberwithin{equation}{section}
\begin{document}
	
	\title{\LARGE{\textbf{Partial regularity for minimizers of a class \\of discontinuous Lagrangians}}}
	\author{\textbf{Roberto Colombo}}
	\affil{Scuola Normale Superiore \\Piazza dei Cavalieri, 7, 56126 Pisa, Italy\\ e-mail: roberto.colombo@sns.it}
	\date{}
	\maketitle

	\abstract
	{\noindent We study a one dimensional Lagrangian problem including the variational reformulation, derived in a recent work of Ambrosio-Baradat-Brenier, of the discrete Monge-Ampère gravitational model, which describes the motion of interacting particles whose dynamics is ruled by the optimal transport problem. The more general action type functional we consider contains a discontinuous potential term related to the descending slope of the opposite squared distance function from a generic discrete set in $\mathbb{R}^{d}$. We exploit the underlying geometrical structure provided by the associated Voronoi decomposition of the space to obtain $C^{1,1}$ regularity for local minimizers out of a finite number of shock times.}
	
	\tableofcontents
	
	\section{Introduction}  
	In recent years, action functionals of the form
	\begin{equation}\label{primafunc}
		I_{f}(\gamma)=\int_{0}^{1}|\dot{\gamma}|^{2}+|\nabla f(\gamma)|^2
	\end{equation}
	have received the attention of many authors, due to their appearence in several areas of Mathematics. In the theory of gradient flows, for instance, they correspond to the integral form of the energy dissipation (see \cite{AGS}), and they are also related to the so called entropic regularization of the Wasserstein distance, when $f$ is a multiple of the logarithmic entropy, defined on the space of probability measures with finite quadratic moment (see \cite{Clerc2020OnTV}). In all these cases, the main obstruction to the application of standard results of Calculus of Variations stems from the lack of differentiability, even continuity, of the Lagrangian with respect to $\gamma$.
	
	When $f:X\rightarrow (-\infty,+\infty]$ is a $\lambda$-convex function defined on a metric space $(X,d)$, the term $|\nabla f(x)|$ has to be interpreted as the descending slope of $f$ at $x$, namely
	\begin{equation}\label{descendingslope}
		|\nabla^{-} f|(x):=\underset{y\rightarrow x}{\limsup}\,\frac{[f(x)-f(y)]^{+}}{d(x,y)},
	\end{equation}
	and, if $X$ is a Hilbert space, it also coincides with the norm of the minimal selection in the subdifferential $\partial f(x)$, known as the extended gradient of $f$ at $x$ (see \cite{AGS}). In this general framework, stability of the functionals $I_{f}$ with respect to $\Gamma$-convergence of the functions $f$ was investigated in \cite{AmbBrMAG}, \cite{AmbBrGAMMACONV} and \cite{AmbBre21}. In particular, \cite{AmbBrMAG} addressed a rigorous derivation, along the lines of \cite{Brenier2015ADL}, of a dynamical system of interacting particles strictly related to the optimal transport problem, known as the discrete Monge-Ampère gravitational (MAG) model (see subsection \ref{subsectionMAG}). What emerged from this work is that the dynamics of MAG can be conveniently studied as the Euler-Lagrange equation associated to an action functional of type (\ref{primafunc}), where $f=f_{K}$ is the $(-1)$-convex function given by the opposite squared distance from a specific discrete set $K\subset \mathbb{R}^{d}$, namely
	\begin{equation}\label{oppdistintro}
		f_{K}(x):=-\frac{\dist^{2}_{K}(x)}{2}=-\underset{y\in K}{\min}\,\frac{|x-y|^{2}}{2}.
	\end{equation}
	Clearly, $f_{K}$ is not everywhere differentiable in general, so that $\nabla f_{K}$, denoting the extended gradient of $f_K$, is not even continuous, and standard results of Calculus of Variations are not directly applicable in this case.
	
	The present work aims at a systematic analysis of the properties of local minimizers for the functional $I_{f_{K}}$, where $K$ is a generic discrete set $K$ in $\mathbb{R}^{d}$. Our results apply in particular to solutions of the discrete MAG model, thereby addressing the general $n$-dimensional case, left open in \cite{AmbBrMAG}, where the most involved part of the analysis was carried out only in dimension $1$. 
	
	The plan of the paper is the following. In section \ref{secgeneralframe} we present the general framework and the motivations of our work. More precisely, in subsection \ref{subsecactionfunc} we provide a contextualization of the problem in the general Hilbertian setting, with particular emphasis on the variational properties of functionals of type (\ref{primafunc}), when $f$ is a $\lambda$-convex function. The main references for this part are \cite{AmbBrGAMMACONV} and \cite{AGS}. Then, in subsection \ref{subsectionMAG}, we introduce the Monge-Ampère gravitational model in the flat torus $\mathbb{T}^{n}$ as a modification of the classical Newtonian gravitation in which the linear Poisson equation is replaced by the fully nonlinear Monge-Ampère equation (see (\ref{MAG})). Following the ideas of \cite{Brenier2015ADL}, we underline the intriguing link of this dynamical system with the optimal transport problem, whose powerful tools can be used in order to derive a Lagrangian reformulation of MAG particularly meaningful in the discrete setting, where additional foundation to the model is given by the results in \cite{AmbBrMAG}. By means of a least action principle, we then interpret solutions of the discrete MAG model as local minimizers of the functional $I_{f_{K}}$, where $f_{K}$ is the opposite squared distance function from a discrete set $K\subset \mathbb{R}^{d}$, as defined in (\ref{oppdistintro}). 
	Section \ref{secoppsqdistfunc}, being the core of the paper, is then devoted to the analysis of local minimizers for the Lagrangian problem associated to $I_{f_{K}}$, when $K$ is a finite collection of points in $\mathbb{R}^{d}$:
	\[K=\left\{p_{1},\dots,p_{N}\right\}.\]
	We crucially consider the Voronoi partition of the space carried by $K$, which encodes the underlying geometrical structure of the problem, and exploit it in order to obtain, in Proposition \ref{conserv}, the existence of some specific directions along which momentum is locally conserved by the dynamics. As a byproduct, we show in Corollary \ref{reginvoronoicell} that a local minimizer $\gamma$ is regular as long as it stays in a single Voronoi cell, possibly developing singularities only at those times in which the optimality class changes. As it is shown later, the set $K$ also carries a partition of $\mathbb{R}^{d}$ into \lq\lq potential zones'' (see Proposition \ref{vorcellsandpotzones}). This second partition is in general less fine than the Voronoi one, and coincides with it when the set $K$ is \lq\lq balanced'', like for instance a cubic lattice. We then define $S(\gamma)$ to be the set of \lq\lq shock times'', at which the curve $\gamma$ jumps from a Voronoi cell to another, and $NDS(\gamma)\subseteq S(\gamma)$ the set of \lq\lq nondegenerate shock times'', at which $\gamma$ not only changes the Voronoi cell, but also the potential zone. With this in mind, our main regularity results Theorem \ref{teoprinc} and Corollary \ref{improvedreg} can be collected in a single statement as follows:
	\begin{thm*}[Partial regularity]
		Let $\gamma$ be a local minimizer of $I_{f_K}$ with endpoints constraints. Then
		\begin{itemize}
			\item[i)] $\gamma$ has a finite number of nondegenerate shock times out of which it is $C^{1,1}$.
			\item[ii)] Under the additional assumption that $K$ is balanced, $\gamma$ has a finite number of shock times out of which it is $C^{\infty}$.
		\end{itemize}
	\end{thm*}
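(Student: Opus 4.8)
The plan is to use the shock times to split $[0,1]$ into finitely many subintervals and study $\gamma$ on each of them separately, deferring the finiteness of $NDS(\gamma)$ to the end. By Corollary~\ref{reginvoronoicell}, while $\gamma$ stays in a single open Voronoi cell it solves the linear equation $\ddot\gamma=\gamma-p_i$ — the Euler--Lagrange equation of $\int|\dot\gamma|^2+\dist^2_K(\gamma)$, since $|\nabla f_K|^2=\dist^2_K$ on the regular set — hence is real--analytic there, so regularity can only be lost at shock times. I first treat a $t_0\in S(\gamma)\setminus NDS(\gamma)$, where the Voronoi cell jumps, say from that of $p_i$ to that of $p_j$, but the potential zone does not. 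Since a reflection or a sticking episode at $t_0$ would change the potential zone, at $t_0$ the curve crosses the separating face transversally and is therefore a weak solution of the Euler--Lagrange system in a neighbourhood of $t_0$. By Proposition~\ref{conserv} the components of $\gamma$ along the directions orthogonal to $p_i-p_j$ satisfy one and the same linear ODE on both sides of $t_0$, so they stay $C^\infty$ through $t_0$, while in the remaining direction the right--hand side (equal to $\gamma-p_i$ or $\gamma-p_j$, hence bounded as $\gamma$ is bounded on $[0,1]$) prevents $\dot\gamma$ from jumping. Consequently, on any subinterval of $[0,1]$ free of nondegenerate shock times $\dot\gamma$ is continuous with $\ddot\gamma\in L^\infty$, i.e.\ $\gamma\in C^{1,1}$ there, irrespective of how the degenerate shock times are distributed.

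For the finiteness of $NDS(\gamma)$, suppose it fails and let $t^\ast$ be an accumulation point of $NDS(\gamma)$, $x^\ast=\gamma(t^\ast)$. For $t$ close to $t^\ast$ the point $\gamma(t)$ lies in a small ball about $x^\ast$ meeting only finitely many potential zones, so some zone wall $W$ through $x^\ast$ is crossed by $\gamma$ at times $t_k\to t^\ast$, each crossing a genuine change of potential zone. By Proposition~\ref{conserv} the jump of $\dot\gamma$ at each such crossing lies in a fixed one--dimensional direction, and its size is controlled (the Erdmann relation gives conservation of $\dist^2_K(\gamma)-|\dot\gamma|^2$ along minimizers); since moreover $|\ddot\gamma|$ is bounded near $t^\ast$, the curve needs a positive, $k$--independent amount of time to move off one side of $W$ and return, contradicting $t_k\to t^\ast$. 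Should this quantitative estimate prove elusive, the alternative is an excision argument: an arc of $\gamma$ crossing $W$ back and forth over an arbitrarily short time interval can be replaced by a competitor with the same endpoints lying on the cheaper side of $W$ (or running along $W$), strictly lowering the action and contradicting local minimality. Either way $NDS(\gamma)$ is finite, which together with the previous paragraph gives i).

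For ii), if $K$ is balanced then by Proposition~\ref{vorcellsandpotzones} the potential zones coincide with the Voronoi cells, so $S(\gamma)=NDS(\gamma)$, which is finite by i). Between two consecutive shock times $\gamma$ remains in a single stratum of the Voronoi complex: either the interior of a cell, where $\ddot\gamma=\gamma-p_i$, or — during a sticking episode — a single flat face, where the constrained Euler--Lagrange equation is again a constant--coefficient linear ODE (with centre $\tfrac{1}{2}(p_i+p_j)$ for the face between the cells of $p_i$ and $p_j$). In both cases $\gamma$ is real--analytic, in particular $C^\infty$, there, which is ii).

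The step I expect to be the real obstacle is the finiteness of $NDS(\gamma)$: the inter--shock regularity reduces to local ODE theory glued together by the momentum conservation of Proposition~\ref{conserv}, whereas excluding an accumulation of nondegenerate shocks genuinely uses minimality and the rigidity of the zone geometry, and making either the spacing estimate or the competitor construction work uniformly near the accumulation point is the delicate part.
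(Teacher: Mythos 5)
Your first paragraph contains a genuine gap. Between nondegenerate shock times you treat the degenerate shocks as isolated, transversal crossings of a single separating face and glue ODE solutions across them, asserting that ``a reflection or a sticking episode at $t_0$ would change the potential zone.'' That assertion is false: a potential zone $Q_\eta$ is in general a union of several Voronoi cells of different dimensions, and the curve can stick to, or repeatedly touch, a lower-dimensional cell without ever changing $\eta(\gamma)$ (this is exactly what happens in Example \ref{ex2}); moreover nothing rules out, a priori, an accumulation of degenerate shocks, and the paper never proves $S(\gamma)$ finite for general $K$. Inside a potential zone the curve is only a minimizer of a problem \emph{constrained} to the convex polyhedron $P_\eta$ (via inequality (\ref{comparison})), so it need not be a weak solution of the unconstrained Euler--Lagrange equation near such times. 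The paper's route is different: it proves a convex-constraint regularity lemma (Lemma \ref{regvinc}), showing that any local minimizer of $\int |\dot\gamma|^2+\Psi(\gamma)$ among curves confined to a closed convex set is $C^{1,1}$ with $|\ddot\gamma|\le \tfrac12|\nabla\Psi(\gamma)|$, and applies it with $P=P_\eta$; this handles arbitrary boundary contact and is why only $C^{1,1}$ (not $C^2$) is claimed in general. Your argument, as written, does not yield $C^{1,1}$ across a possible accumulation of degenerate shocks or a sticking episode on a lower-dimensional cell.

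The finiteness of $NDS(\gamma)$ --- which you yourself flag as the real obstacle --- is also not established. Your spacing argument rests on two unproven premises: that $\dot\gamma$ jumps by a definite amount at \emph{every} nondegenerate shock (the paper proves this only for \emph{effective} shocks, i.e.\ neat one-sided passages, and near an accumulation point the shocks need not be effective), and that $|\ddot\gamma|$ is bounded near the accumulation time, which is circular since that bound is exactly what the regularity statement would give. Your fallback excision argument is the right spirit but misses the actual mechanism: the paper first reduces (Step 1) to the case where at least two distinct \emph{lowest}-potential zones $Q_\eta,Q_{\bar\eta}$ with $|\eta|=|\bar\eta|=a$ are visited infinitely often (if only one is, projecting onto $P_\eta$ already wins), then shows (Step 2), by comparison with the projection of $\gamma$ onto a segment, that the time spent in higher zones is $O((t-s_\ell)^2)$, and finally (Step 3) uses Lemma \ref{distpol} together with the observation that on the interface $\overline{V_H}\cap\overline{V_{\bar H}}$ the potential drops strictly below $a$ (because $\tfrac{\eta+\bar\eta}{2}\in\partial g$ there, so $|\eta(x_\ell)|\le \bigl(a^2-|\tfrac{\eta-\bar\eta}{2}|^2\bigr)^{1/2}+2|x_\ell|$) to build a short piecewise-linear detour through the interface that strictly lowers the action. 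In your sketch it is not identified which region is ``cheaper'' (both sides of the wall $W$ can sit at the same level $a$; the gain comes from the interface between them), nor is the cost of the detour controlled, which requires both the quadratic time estimate and the polytope distance lemma. So both halves of part i) need the paper's additional ideas; your part ii), conditional on i), is essentially correct.
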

	\noindent This result in particular provides an extention to any space dimension of \cite[Theorem 13]{AmbBrMAG}, where regularity out of a finite number of shock times was proved for minimizers of a one dimentional version of the MAG model.    
	
	\bigskip
	
	\noindent\textbf{Acknowledgements.} I wish to thank Prof.\ Luigi Ambrosio for introducing me to the present research topic and for providing me with very useful suggestions throughout the preparation of this work.
	
	\section{General framework and motivations}\label{secgeneralframe}
	
	\subsection{Action functionals depending on the gradient of convex functions}\label{subsecactionfunc}
	\paragraph{$\lambda$-convex functions.} 
	Given a Hilbert space $H$, we consider a function $f:H\rightarrow (-\infty,+\infty]$, and denote by $\dom(f)$ its finiteness domain. We say that $f$ is \textit{$\lambda$-convex} if $x\mapsto f(x)-\frac{\lambda}{2}|x|^2$ is convex. It is easily seen that $\lambda$-convex functions are precisely those functions that satisfy the perturbed convexity inequality
	\[f((1-t)x+ty)\le (1-t)f(x)+tf(y)-\frac{\lambda}{2}t(1-t)|x-y|^{2},\quad t\in[0,1].\]
	
	\noindent By $\partial f(x)$ we denote the \textit{Gateaux subdifferential} of $f$ at $x\in \dom(f)$, namely the (possibly empty) closed convex set
	\[\partial f(x):=\left\{\xi \in H: \underset{t\rightarrow 0^{+}}{\liminf}\,\frac{f(x+tv)-f(x)}{t}\ge \xi\cdot v, \, \forall v \in H\right\}.\]
	We denote by $\dom(\partial f)$ the domain of the subdifferential. For a $\lambda$-convex function, we can exploit the monotonicity of difference quotients to derive the equivalent non asymptotic definition of the subdifferential
	\[\partial f(x):=\left\{\xi\in H: f(y)\ge f(x)+\langle \xi,y-x \rangle + \frac{\lambda}{2}|y-x|^{2}, \, \forall y \in H\right\}.\]
	Whenever $x\in \dom(\partial f)$, there exists a unique element $\xi$ with minimal norm in $\partial f(x)$, obtained by projecting $0$ on $\partial f(x)$. This element is called the \textit{extended gradient} of $f$ at $x$, and is denoted by $\nabla f(x)$. The concept of extended gradient is strictly related to the one of \textit{descending slope} of $f$ at $x\in \dom(f)$, namely
	\[|\nabla^{-}f|(x):=\underset{y\rightarrow x}{\limsup}\,\frac{[f(x)-f(y)]^{+}}{|x-y|}.\]
	In fact, for $\lambda$-convex functions, it can be proved that $\partial f(x)$ is not empty if and only if $|\nabla^{-}f|(x)<+\infty$, and that, in this case, the following equalities hold (see \cite{AGS}):
	\begin{equation}\label{extgrad}
		|\nabla f(x)|=|\nabla^{-}f|(x)=\underset{y\neq x}{\sup}\,\frac{[f(x)-f(y)+\frac{\lambda}{2}|x-y|^{2}]^{+}}{|x-y|}.
	\end{equation}
	By setting $|\nabla f|$ equal to $+\infty$ out of $\dom(\partial f)$, we easily deduce from (\ref{extgrad}) that the function $H\ni x \mapsto |\nabla f(x)|\in [0,+\infty]$ is lower semicontinuous, being the supremum of a collection of continuous functions. 
	
	In this paper we deal with the following specialization of the above setting. Given a closed set $K\subseteq H$, we consider the \textit{opposite squared distance function} from $K$, namely
	\begin{equation}\label{oppdistfunc}
		f_{K}(x):=-\frac{\dist^{2}_{K}(x)}{2}=-\underset{y\in K}{\inf}\frac{|x-y|^{2}}{2}.
	\end{equation}
	The infimum is not attained in general, unless $K$ is either convex or compact, or $H$ is finite dimensional.
	By defining the convex function 
	\[g_{K}(x):=\underset{y\in K}{\sup}\left(\langle x, y\rangle-\frac{|y|^2}{2}\right),\]
	we derive from the equality $g_{K}(x)=f_{K}(x)+\frac{|x|^{2}}{2}$ that $f_{K}$ is $(-1)$-convex.
	\paragraph{A class of action functionals.}
	We now introduce, in the general Hilbertian setting, the class of action functionals that we are going to study throughout the paper. We fix a function $h:[0,+\infty]\rightarrow [0,+\infty]
	$ representing a \lq\lq potential shape''. Then, for $\delta>0$ and $f:H\rightarrow (-\infty,+\infty]$ proper, $\lambda$-convex and lower semicontinuous, we consider the functional $I^{\delta}_{f}:C([0,\delta], H)\rightarrow [0,+\infty]$ defined by
	\begin{equation}\label{funzionale}
		I^{\delta}_{f}(\gamma):=
		\begin{cases}
			\displaystyle\int_{0}^{\delta}|\dot{\gamma}|^2+h(|\nabla f|^{2}(\gamma))\quad &\text{if $\gamma \in AC([0,\delta], H)$},\\
			+\infty  &\text{otherwise}.	
		\end{cases}
	\end{equation}
	Compared to the one of type (\ref{primafunc}) studied so far in the literature, we consider here the enriched class of functionals in which the potential shape $h$ is allowed to be different from the identity. In the sequel we assume $h$ to be continuous, and $C^{1}$ when restricted to $[0,+\infty)$. 
	
	As we have in mind to study this type of functionals from the variational point of view, it is crucial to realize that (\ref{funzionale}) is lower semicontinuous with respect to the $C([0,\delta],H)$ topology. This in fact easily follows from the lower semicontinuity of the classical action and the above characterization of the extended gradient (\ref{extgrad}). Then, for $x_{0}, x_{\delta} \in H$, the infimum
	\begin{equation*}\label{infimum}
		\Gamma^{\delta}_{f}(x_0,x_{\delta}):=\inf \left\{I^{\delta}_{f}(\gamma):\gamma(0)=x_{0}, \gamma(\delta)=x_{\delta}\right\}
	\end{equation*}
	is attained under suitable coercivity conditions. Note in particular that this is the case if $H$ is finite dimensional. 
	
	Due to the lack of continuity of the potential term, however, very little is known about the regularity for minimizers of this type of functionals, even in the finite dimensional case. We could ask for instance whether some higher regularity or at least a sort of \textit{Euler-Lagrange equation} like formally
	\begin{equation}\label{eulerolagrange}
		\ddot{\gamma}=h'(|\nabla f(\gamma)|^2)\nabla^{2}f(\gamma)\nabla f(\gamma)
	\end{equation}
	could be derived for local minimizers. In the very specific case in which $f$ is the opposite squared distance function from a discrete set in $\mathbb{R}^{d}$, we will prove in the sequel that local minimizers are piecewise $C^{1,1}$, and that, out of a finite number of singularities, (\ref{eulerolagrange}) holds taking the modulus on both sides and replacing the equality with a $\le$ sign (see Theorem \ref{teoprinc}). Nevertheless, in the general setting, one can exploit the fact that the functional $I^{\delta}_{f}$ is autonomous in order to perform \lq\lq horizontal'' variations of the independent variable, and eventually derive the \textit{Dubois-Reymond equation} for a local minimizer $\gamma$ (see \cite{AMBROSIO1989301}):
	\begin{equation}\label{DBR}
		\frac{d}{dt}\left\{|\dot{\gamma}|^2-h(|\nabla f|^{2}(\gamma))\right\}=0
	\end{equation}
	in the sense of distributions in $(0,\delta)$. Equivalently, there exists a constant $c\in \mathbb{R}$ such that 
	\[|\dot{\gamma}|^2=h(|\nabla f|^{2}(\gamma))+c\]
	a.e.\ in $(0,\delta)$. This implies in particular that every local minimizer of $I^{\delta}_{f}$ is Lipschitz continuous, provided that $|\nabla f|$ is bounded on bounded sets.
	
	We end this part by quoting a result from \cite{AmbBrGAMMACONV} addressing the matter of stability for the class of functionals considered so far. By adding endpoints constraints $x_{0},x_{\delta}\in H$, we define the functional $I^{\delta}_{f,x_{0},x_{\delta}}:C([0,\delta], H)\rightarrow [0,+\infty]$ such that
	\begin{equation*}\label{funzionaleconendpoits}
		I^{\delta}_{f,x_{0},x_{\delta}}(\gamma):=
		\begin{cases}
			\displaystyle\int_{0}^{\delta}|\dot{\gamma}|^2+h(|\nabla f|^{2}(\gamma))\quad &\text{if $\gamma \in AC([0,\delta], H)$ and $\gamma(0)=x_{0}, \gamma(\delta)=x_{\delta}$},\\
			+\infty  &\text{otherwise}.  	
		\end{cases}
	\end{equation*} 
	\begin{thm}[Stability, \cite{AmbBrGAMMACONV}]
		\label{stab}
		Let $f_{j}, f$ be uniformly $\lambda$-convex functions, and let $x_{j,0}, x_{j,\delta}, x_{0}, x_{\delta}\in H$. Suppose that
		\begin{itemize}
			\item[i)]$f_{j}\rightarrow f$ w.r.t. Mosco convergence.
			\item[ii)]$\underset{j\rightarrow \infty}{\lim}x_{j,i}=x_{i}$, for $i=0, \delta$.
			\item[iii)]$\underset{j}{\sup}|\nabla f_{j}|(x_{j,i})<\infty$, for $i= 0, \delta$.
		\end{itemize}
		Then $I^{\delta}_{f_{j},x_{j,0},x_{j,\delta}}$ $\Gamma$-converge to $I^{\delta}_{f,x_{0},x_{\delta}}$ in the $C([0,\delta], H)$ topology. 
	\end{thm}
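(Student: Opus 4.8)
The statement is an instance of $\Gamma$-convergence, so the proof splits into the two standard inequalities, and throughout one may restrict attention to curves with finite energy and invoke ii) to deduce that limit curves satisfy the limit endpoint constraints. For the \emph{liminf inequality}, given $\gamma_j\to\gamma$ in $C([0,\delta],H)$ with $\liminf_j I^{\delta}_{f_j,x_{j,0},x_{j,\delta}}(\gamma_j)<\infty$, pass to a subsequence realizing the liminf; the kinetic part $\int_0^\delta|\dot\gamma|^2$ is lower semicontinuous by convexity (weak lower semicontinuity in $W^{1,2}$). For the potential part I would use the supremum representation \eqref{extgrad}: for fixed $y$ and $t$, a Mosco recovery sequence $y_j\to y$ with $\limsup_j f_j(y_j)\le f(y)$ combined with the Mosco liminf inequality $\liminf_j f_j(\gamma_j(t))\ge f(\gamma(t))$ (valid since $\gamma_j(t)\to\gamma(t)$ strongly) gives
\[
\liminf_j |\nabla f_j|(\gamma_j(t))\ \ge\ \liminf_j\frac{[\,f_j(\gamma_j(t))-f_j(y_j)+\frac{\lambda}{2}|\gamma_j(t)-y_j|^2\,]^{+}}{|\gamma_j(t)-y_j|}\ \ge\ \frac{[\,f(\gamma(t))-f(y)+\frac{\lambda}{2}|\gamma(t)-y|^2\,]^{+}}{|\gamma(t)-y|}.
\]
Taking the supremum over $y\neq\gamma(t)$ yields $\liminf_j|\nabla f_j|^2(\gamma_j(t))\ge|\nabla f|^2(\gamma(t))$ for every $t$; since $h$ is continuous and nondecreasing this passes through $h$ pointwise, and Fatou's lemma upgrades it to the integrals. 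Adding the two contributions proves the liminf inequality.

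For the \emph{limsup inequality} I would exhibit a recovery sequence via the Moreau--Yosida machinery. Fix $\gamma$ with $I^{\delta}_{f,x_0,x_\delta}(\gamma)<\infty$; then $|\nabla f|^2(\gamma(t))<\infty$ a.e., so $\gamma(t)\in\overline{\dom f}$ for every $t$. For $\varepsilon>0$ small (so $1+\varepsilon\lambda>0$, admissible by uniform $\lambda$-convexity) let $J^{g}_\varepsilon=(\id+\varepsilon\partial g)^{-1}$ and $g_\varepsilon$ be the regularizations; recall from \cite{AGS} that $J^{g}_\varepsilon$ is $(1+\varepsilon\lambda)^{-1}$-Lipschitz, that $\frac{x-J^{g}_\varepsilon x}{\varepsilon}=\nabla g_\varepsilon(x)\in\partial g(J^{g}_\varepsilon x)$, that the slope contracts, $|\nabla g|(J^{g}_\varepsilon x)\le|\nabla g_\varepsilon(x)|\le|\nabla g|(x)$, and that $J^{g}_\varepsilon x\to x$ as $\varepsilon\downarrow0$ for $x\in\overline{\dom g}$, locally uniformly. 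Moreover, by the classical equivalence between Mosco convergence and convergence of resolvents (see \cite{AGS}), $J^{f_j}_\varepsilon x\to J^{f}_\varepsilon x$ and $\nabla f_{j,\varepsilon}(x)\to\nabla f_\varepsilon(x)$ for every $x$, locally uniformly by equi-Lipschitzianity. Setting $\gamma^\varepsilon_j:=J^{f_j}_\varepsilon\circ\gamma$ on a slightly shrunk interval, one gets $|\dot\gamma^\varepsilon_j|\le(1+\varepsilon\lambda)^{-1}|\dot\gamma|$ and $|\nabla f_j|(\gamma^\varepsilon_j(t))\le|\nabla f_{j,\varepsilon}(\gamma(t))|\to|\nabla f_\varepsilon(\gamma(t))|\le|\nabla f|(\gamma(t))$; a reverse Fatou argument (the integrands being dominated since $|\nabla f_{j,\varepsilon}(\gamma(\cdot))|$ is bounded and $h$ continuous) gives $\limsup_j\int h(|\nabla f_j|^2(\gamma^\varepsilon_j))\le\int h(|\nabla f_\varepsilon|^2(\gamma))\le\int h(|\nabla f|^2(\gamma))$, while $\int|\dot\gamma^\varepsilon_j|^2\le(1+\varepsilon\lambda)^{-2}\int|\dot\gamma|^2$. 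Letting $\varepsilon\downarrow0$ and diagonalizing produces curves converging uniformly to $\gamma$ whose interior cost has limsup $\le I^{\delta}_f(\gamma)$.

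It remains to restore the prescribed endpoints, and this is the delicate point. Near $t=0$ one interpolates, on a short interval $[0,s_j]$ with $s_j\downarrow0$, between $x_{j,0}$ and the value of the interior curve at $s_j$, paying a kinetic cost of order $|x_{j,0}-\gamma^{\varepsilon_j}(s_j)|^2/s_j$ that vanishes by ii) and the continuity of $\gamma$ once $s_j$ decays slowly enough, and --- crucially --- a potential cost $\int_0^{s_j}h(|\nabla f_j|^2(\cdot))$ that one must keep bounded although $|\nabla f_j|$ is only lower semicontinuous and is controlled a priori, via iii), only at the single point $x_{j,0}$. The fix is to route the connecting arc along (a reparametrization of) the resolvent path $\sigma\mapsto J^{f_j}_\sigma(x_{j,0})$, which issues from $x_{j,0}$, has length at most $\varepsilon_j\,|\nabla f_j|(x_{j,0})\to 0$, and along which the slope-contraction property forces $|\nabla f_j|\le\sup_j|\nabla f_j|(x_{j,0})<\infty$, so that the potential cost is $O(s_j)$; one then splices this arc onto $\gamma^{\varepsilon_j}$ through an intermediate segment joining two points of comparable, uniformly bounded slope, and argues symmetrically at $t=\delta$. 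I expect this endpoint surgery --- reconciling the hard constraint with the interior approximation while keeping the discontinuous potential term under control on arcs of vanishing length --- to be the main obstacle; the remainder is a routine assembly of the lower semicontinuity estimates, Fatou and reverse Fatou, and the standard stability of resolvents under Mosco convergence.
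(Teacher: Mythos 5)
The paper itself does not prove Theorem \ref{stab}: it is quoted from \cite{AmbBrGAMMACONV} (stated there for $h=\id$), with only the remark that the same proof adapts to a general potential shape. So there is no internal proof to compare against; what can be said is that your outline follows the same circle of ideas as the cited source. Your liminf argument is correct: the sup-representation \eqref{extgrad} together with the two halves of Mosco convergence gives $\liminf_j|\nabla f_j|(\gamma_j(t))\ge|\nabla f|(\gamma(t))$ pointwise, and Fatou plus weak $W^{1,2}$ lower semicontinuity does the rest. (Note that you need $h$ nondecreasing here, since you only get one-sided convergence of the slopes; the paper's standing assumptions on $h$ do not state monotonicity, but it is indeed needed for the general-$h$ version, and is part of the ``minor modifications''. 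You also silently use hypothesis iii) to get the uniform bound on $|\nabla f_{j,\varepsilon}(\gamma(\cdot))|$ that dominates the potential term in the limsup step: it comes from $|\nabla f_{j,\varepsilon}(x)|\le|\nabla f_{j,\varepsilon}(x_{j,0})|+C\varepsilon^{-1}|x-x_{j,0}|$ together with $|\nabla f_{j,\varepsilon}(x_{j,0})|\le|\nabla f_j|(x_{j,0})$; this should be said explicitly, as it is precisely where iii) enters the interior estimate.)

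The one genuinely soft spot is the endpoint surgery, which you correctly identify as the delicate point but do not close. Routing along the resolvent arc $\sigma\mapsto J^{f_j}_{\sigma}(x_{j,0})$ is fine (short length, slope bounded by $\sup_j|\nabla f_j|(x_{j,0})$ up to constants), but the subsequent ``intermediate segment joining two points of comparable, uniformly bounded slope'' is not justified as written: for a $\lambda$-convex function the slope along a straight segment is \emph{not} controlled by the slopes at its endpoints, so the potential cost on that connector is a priori unbounded even though its length vanishes. The standard fix is to keep the connector inside the resolvent image, e.g.\ take the $J^{f_j}_{\varepsilon_j}$-image of the segment $[x_{j,0},x_0]$ (or of $[x_{j,0},\gamma(s_j)]$): there the slope at $J^{f_j}_{\varepsilon_j}(z)$ is bounded by $|\nabla f_{j,\varepsilon_j}(z)|\le|\nabla f_j|(x_{j,0})+C\varepsilon_j^{-1}|z-x_{j,0}|$, which stays uniformly bounded for $j$ large because $x_{j,0}\to x_0$ and $s_j,\varepsilon_j$ can be tuned (e.g.\ $|x_{j,0}-x_0|+|\gamma(s_j)-x_0|=o(\varepsilon_j)$), and the kinetic cost is handled exactly as you propose; alternatively one can use short gradient-flow (EVI) arcs, along which the slope is nonincreasing up to an exponential factor. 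With that replacement, and the symmetric construction at $t=\delta$ plus a reparametrization to restore the full time interval, your scheme is the correct one.
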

	\noindent As a byproduct, under an additional equi-coercivity assumption, this theorem grants convergence of minimal values to minimal values and of minimizers to minimizers. Notice that Theorem \ref{stab} is stated in \cite{AmbBrGAMMACONV} for $h=\id$, but the same proof is seen to work in the general case with only minor modifications. 
	\subsection{The Monge-Ampère gravitational model}\label{subsectionMAG}
	In a periodic spatial domain like the flat torus $\mathbb{T}^{n}=\mathbb{R}^{n}/\mathbb{Z}^{n}$, we can describe classical Newtonian gravitation of a unity of mass in a \lq\lq parametric'' way as follows. We first choose a reference metric probability space $(\mathcal{A}, \lambda)$ of labels for the gravitating particles. Then we assign to each particle $a\in \mathcal{A}$ its position $X_{t}(a)\in \mathbb{T}^{n}$ at time $t$. Tipical choices for the reference space are the unit cube $[0,1]^n$ with the $n$-dimensional Lebesgue measure in the continuous case, and a finite set of points with the renormalized counting measure in the discrete case. Denoting by $\mu_{t}:=(X_{t})_{\#}\lambda$ the image measure of $\lambda$ by $X_t$, the Newtonian model can be written as
	\begin{equation}\label{Newton}
		\begin{cases}
			\frac{d^{2}}{dt^{2}}X_{t}(a)=-\nabla \phi_{t}(X_{t}(a)),\\
			\Delta \phi_{t} = \mu_{t}-1.
		\end{cases}
	\end{equation}  
	Here $\phi_{t}$ is the gravitational potential generated by $\mu_{t}$, defined on $\mathbb{T}^{n}$.
	Note that due to the periodicity of the space, the average density $1$ has been removed from the right hand side of the Poisson equation, in order to let the uniform measure $\mathscr{L}^{n}$ be a stationary solution of the system. This is a perfectly meaningful assumption, because by symmetry, the attractive force of the uniform density has to be zero everywhere on $\mathbb{T}^{n}$. 
	
	In this section we are interested in the related \textit{Monge-Ampère gravitational model} (MAG in short), which is simply obtained from (\ref{Newton}) by replacing the Poisson equation with the fully nonlinear Monge-Ampère equation:
	\begin{equation}\label{MAG}
		\begin{cases}
			\frac{d^{2}}{dt^{2}}X_{t}(a)=-\nabla \phi_{t}(X_{t}(a)),\\
			\det(\mathbb{I}+\nabla^{2}\phi_{t}) = \mu_{t}.
		\end{cases}
	\end{equation} 
	Notice that (\ref{Newton}) can be recovered from (\ref{MAG}) by expanding the determinant in the Monge-Ampère equation and keeping only the linear term:
	\[\det(\mathbb{I}+\nabla^{2}\phi_{t})\approx 1 + \Tr(\nabla^{2}\phi_t)=1+\Delta \phi_{t}.\]
	We refer to \cite{Brenier2015ADL} and the references therein for a broader introduction to this dynamical system, as well as for a comparison with the classical Newtonian model.
	
	\paragraph{The MAG model in optimal transportation terms.}
	System (\ref{MAG}) appears to have an intriguing geometrical interpretation if we look at it from the optimal transportation point of view. In order to better illustrate this link, we first quote the following specialization to the flat torus $\mathbb{T}^{n}$ of the classical Brenier-McCann theorem on the existence and uniqueness of optimal transport maps on Riemannian manifolds (see \cite{Brenier1991PolarFA}, \cite{CorderoErausquin1999SurLT}, \cite{McCann2001}). Let us begin with some notation. We denote by $\pi:\mathbb{R}^{n}\rightarrow \mathbb{T}^{n}$ the projection to the quotient. We say that a vector field $F:\mathbb{R}^{n}\rightarrow \mathbb{R}^{n}$ is $\mathbb{Z}^{n}$-translation invariant if $F(\cdot+z)=F(\cdot)+z$, for every $z\in \mathbb{Z}^{n}$. If this is the case we make a little abuse of notation by considering $F$ also as a vector field from $\mathbb{T}^{n}$ to itself. Given a Borel probability measure $\lambda$ on $\mathbb{T}^{n}$, we consider the Hilbert space 
	\[H_{\lambda}:= L^{2}(\mathbb{T}^{n}, \lambda; \mathbb{R}^{n})\]
	and its closed subset $K_{\lambda}$ given by all the $\lambda$-preserving vector fields 
	\[K_{\lambda}:=\left\{Y\in H_{\lambda}: (\pi \circ Y)_{\#}\lambda = \lambda \right\}.\]
	Finally, we recall that $f_{K_{\lambda}}$ denotes the opposite squared distance function from $K_{\lambda}$, as defined in (\ref{oppdistfunc}). 
	\begin{thm}[Existence and uniqueness of optimal transport maps in $\mathbb{T}^{n}$]
		\label{CorderoErasquin}
		Let $\mu$ and $\lambda$ be Borel probability measures on $\mathbb{T}^{n}$, and suppose that $\mu \ll \mathscr{L}^{n}$. Then 
		\begin{itemize}
			\item[i)] There exists a locally Lipschitz convex function $\psi:\mathbb{R}^{n}\rightarrow \mathbb{R}$, such that $\phi(x):=\psi(x)-\frac{|x|^{2}}{2}$ is $\mathbb{Z}^{n}$-periodic (therefore $\nabla \psi(x)=x + \nabla \phi(x)$ is $\mathbb{Z}^{n}$-translation invariant), and $T:=\nabla \psi : \mathbb{T}^{n}\rightarrow \mathbb{T}^{n}$ is the unique optimal transport map from $\mu$ to $\lambda$.
			\item[ii)] If $\mu = \rho \mathscr{L}^{n}$ and $\lambda= \eta \mathscr{L}^{n}$ are both absolutely continuous w.r.t.\ the Lebesgue measure, then $\phi$ solves the Monge-Ampère equation  
			\begin{equation}\label{mongeampere}
				\det(\mathbb{I}+\nabla^{2}\phi)\eta(T(x))=\rho(x)
			\end{equation}
			in the almost everywhere sense. Furthermore, if $\rho$ and $\eta$ are of class $C^{0,\alpha}$, then $\phi$ is of class $C^{2,\beta}$, for $0<\beta<\alpha$, and solves (\ref{mongeampere}) in the classical sense.
			\item[iii)] Let $Y\in H_{\lambda}$ be such that $(\pi \circ Y)_{\#}\lambda = \mu$. Then $T \circ Y$ is the unique projection of $Y$ on $K_{\lambda}$, and
			\begin{equation}\label{wassersteindist}
				\lVert T \circ Y - Y \rVert_{H_{\lambda}} = W_{2}(\mu,\lambda),
			\end{equation}
			where $W_{2}$ is the Wasserstein distance in the space $\mathscr{P}_{2}(\mathbb{T}^{n})$. Moreover, the map $f_{K_{\lambda}}$ is Gateaux differentiable at $Y$ and it holds
			\begin{equation}\label{nablaoppdist}
				\nabla f_{K_{\lambda}}(Y)= T \circ Y - Y= \nabla \phi \circ Y.
			\end{equation}
		\end{itemize}
	\end{thm}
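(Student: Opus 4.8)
All three items are the classical Brenier--McCann facts about quadratic-cost optimal transport, transplanted to the flat torus, and the plan is to obtain them by lifting everything to the universal cover $\mathbb{R}^{n}$, where Brenier's theorem applies, exploiting the compatibility between the deck action of $\mathbb{Z}^{n}$ (by isometric translations) and the quadratic cost $\tfrac12\,\dist_{\mathbb{T}^{n}}^{2}$. For (i), I would run Kantorovich duality on $\mathbb{T}^{n}$ for the cost $c(x,y)=\tfrac12\,\dist_{\mathbb{T}^{n}}(x,y)^{2}=\tfrac12\min_{k\in\mathbb{Z}^{n}}|x-y-k|^{2}$ and take a $c$-concave optimal potential $\varphi$. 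Since $\mu\ll\mathscr{L}^{n}$, McCann's theorem on compact Riemannian manifolds (with $\mathbb{T}^{n}$ the model case) yields that the optimal plan is concentrated on the graph of $T(x)=\exp_{x}(-\nabla\varphi(x))$, the exponential being well defined $\mu$-a.e.\ because the cut locus, where the minimizing geodesic fails to be unique, is $\mathscr{L}^{n}$-negligible; on $\mathbb{T}^{n}$ minimizing geodesics are projected straight segments, so, writing $\varphi$ also for its $\mathbb{Z}^{n}$-periodic lift to $\mathbb{R}^{n}$, $T=\pi\circ(\id-\nabla\varphi)$. Using that the quadratic $c$-transform unwinds to an inf-convolution with $\tfrac12|\cdot|^{2}$, the function $\psi:=\tfrac12|\cdot|^{2}-\varphi$ is convex, $\phi:=\psi-\tfrac12|\cdot|^{2}=-\varphi$ is $\mathbb{Z}^{n}$-periodic, $T=\nabla\psi$ and $\nabla\psi=\id+\nabla\phi$ is $\mathbb{Z}^{n}$-translation invariant; uniqueness of $T$ is the uniqueness part of Brenier's theorem, again using $\mu\ll\mathscr{L}^{n}$.

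For (ii), writing $T_{\#}\mu=\lambda$ as $\int g(T(x))\rho(x)\,dx=\int g(y)\eta(y)\,dy$ for all $g\in C(\mathbb{T}^{n})$ and applying the change-of-variables formula for the Brenier map $\psi$, which is twice differentiable $\mathscr{L}^{n}$-a.e.\ in the Aleksandrov sense with Jacobian $\det D^{2}\psi=\det(\mathbb{I}+\nabla^{2}\phi)\ge 0$, gives $\det(\mathbb{I}+\nabla^{2}\phi(x))\,\eta(T(x))=\rho(x)$ a.e. The higher regularity is Caffarelli's elliptic theory for the Monge--Amp\`ere equation: on the torus there is no boundary, so for densities of class $C^{0,\alpha}$ bounded away from $0$ and $\infty$ the $C^{2,\beta}$ regularity of $\phi$ and, by bootstrap, the classical solvability follow from the interior Schauder estimates; I would simply invoke this, as in \cite{CorderoErausquin1999SurLT}.

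For (iii), which is the polar-factorization reading of (i) and is essentially soft, I would argue as follows. If $Y\in H_{\lambda}$ with $(\pi\circ Y)_{\#}\lambda=\mu$ and $Z\in K_{\lambda}$, then $(\pi Z,\pi Y)_{\#}\lambda$ is a coupling of $\lambda$ and $\mu$, whence
\[
\lVert Z-Y\rVert_{H_{\lambda}}^{2}\ \geq\ \int \dist_{\mathbb{T}^{n}}(\pi Z,\pi Y)^{2}\,d\lambda\ \geq\ W_{2}(\mu,\lambda)^{2},
\]
so $\dist(Y,K_{\lambda})\ge W_{2}(\mu,\lambda)$; moreover $Z:=T\circ Y\in K_{\lambda}$ (since $(\pi\circ T\circ Y)_{\#}\lambda=T_{\#}\mu=\lambda$) realizes equality, because $T(x)-x=\nabla\phi(x)$ is a minimizing displacement, so $|T(x)-x|=\dist_{\mathbb{T}^{n}}(x,T(x))$ and $\int|\nabla\phi\circ Y|^{2}d\lambda=\int\dist_{\mathbb{T}^{n}}(x,T(x))^{2}d\mu=W_{2}(\mu,\lambda)^{2}$. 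For uniqueness of the projection, equality in the second inequality forces $(\pi Z,\pi Y)_{\#}\lambda$ to be \emph{the} optimal coupling (unique since $\mu\ll\mathscr{L}^{n}$), i.e.\ $\pi Z=T\circ\pi Y$ $\lambda$-a.e., while equality in the first forces $Z(a)-Y(a)$ to be a minimizing displacement; uniqueness of such displacements off the cut locus then gives $Z=T\circ Y$ $\lambda$-a.e. Finally, with $g_{K_{\lambda}}(Z)=\sup_{W\in K_{\lambda}}\bigl(\langle Z,W\rangle-\tfrac12|W|^{2}\bigr)$ one has $f_{K_{\lambda}}=g_{K_{\lambda}}-\tfrac12|\cdot|^{2}$, with $g_{K_{\lambda}}$ convex, finite and hence continuous on $H_{\lambda}$, and $W\in\partial g_{K_{\lambda}}(Z)$ iff $W$ is a maximizer iff $W=P_{K_{\lambda}}(Z)$ when the projection is unique; thus $\partial g_{K_{\lambda}}(Y)=\{T\circ Y\}$, which yields Gateaux differentiability of $f_{K_{\lambda}}$ at $Y$ together with $\nabla f_{K_{\lambda}}(Y)=T\circ Y-Y=\nabla\phi\circ Y$.

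The genuinely non-routine inputs, and the steps I expect to be the main obstacle, are the Brenier--McCann existence and uniqueness on the torus used in (i) — specifically the negligibility of the cut locus and the convexity of the lifted potential, both resting on the compatibility of the quadratic cost with the $\mathbb{Z}^{n}$-action — together with Caffarelli's regularity invoked in (ii) and the (standard but technical) passage from the a.e.\ Jacobian identity to the Monge--Amp\`ere PDE via Aleksandrov's theorem; once (i) is in hand, (iii) is a routine exercise in convex analysis, so the bulk of the effort would go into making the lifting argument in (i) precise.
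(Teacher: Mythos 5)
This theorem is not proved in the paper at all: it is quoted as a known specialization of the Brenier--McCann theory to $\mathbb{T}^{n}$, with the proof delegated to \cite{Brenier1991PolarFA}, \cite{CorderoErausquin1999SurLT}, \cite{McCann2001} (and, for the Hilbertian reading in iii), essentially to \cite{AmbBrMAG}). Your outline for (i) and (ii) — lifting to $\mathbb{R}^{n}$, Kantorovich duality for $\tfrac12\min_{k\in\mathbb{Z}^{n}}|x-y-k|^{2}$, convexity of $\psi=\tfrac12|\cdot|^{2}-\varphi$ via the inf-convolution structure, negligibility of the cut locus, Aleksandrov change of variables and Caffarelli regularity — is exactly the route of those references, and the first two displayed claims of (iii) (the identification $\dist(Y,K_{\lambda})=W_{2}(\mu,\lambda)$ and the uniqueness of the projection via uniqueness of the optimal plan and of minimizing displacements off the cut locus) are argued correctly.

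There is, however, a genuine gap in the last step of (iii). You conclude Gateaux differentiability of $f_{K_{\lambda}}$ at $Y$ from the chain ``$W\in\partial g_{K_{\lambda}}(Y)$ iff $W$ is a maximizer iff $W=P_{K_{\lambda}}(Y)$, hence $\partial g_{K_{\lambda}}(Y)=\{T\circ Y\}$''. In the infinite-dimensional space $H_{\lambda}$ (which is infinite dimensional whenever $\lambda$ is not finitely supported) this equivalence is false: uniqueness of the nearest point does \emph{not} imply that the subdifferential of $g_{K}(x)=\sup_{y\in K}\bigl(\langle x,y\rangle-\tfrac12|y|^{2}\bigr)$ is a singleton, because almost-maximizing sequences in $K$ that escape weakly contribute extra elements. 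A concrete counterexample in $\ell^{2}$: take $K=\{e_{1}\}\cup\{a_{n}e_{n}:n\ge2\}$ with $a_{n}\downarrow1$; then $0$ has the unique nearest point $e_{1}$, yet $g_{K}(-te_{1})=-\tfrac12$ for all $t>0$, so the directional derivatives at $0$ along $\pm e_{1}$ are $1$ and $0$, and in fact $0\in\partial g_{K}(0)$ — $g_{K}$ is not Gateaux differentiable there. This is precisely the situation Lemma \ref{subdiffsupfunc} (compact index set, finite dimension) is designed to exclude, and it cannot be invoked here. To close the gap you must use the structure of the problem rather than abstract convex analysis: every almost-maximizing sequence $W_{j}\in K_{\lambda}$ induces couplings $(\pi\circ W_{j},\pi\circ Y)_{\#}\lambda$ whose transport cost converges to $W_{2}^{2}(\mu,\lambda)$, so by narrow compactness and the \emph{uniqueness of the optimal plan} (here $\mu\ll\mathscr{L}^{n}$ is used again) their limits are all the plan induced by $T$, which forces the one-sided directional derivatives of $f_{K_{\lambda}}$ at $Y$ to be linear and equal to $\langle T\circ Y-Y,\cdot\rangle$; this is the argument carried out in \cite{AmbBrMAG} (see also the slope characterization in \cite[Theorem 10.4.12]{AGS}). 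The rest of your proposal stands, but as written the differentiability claim in (\ref{nablaoppdist}) is not justified.
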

	\noindent In order to reformulate the Monge-Ampère gravitational model in optimal transportation terms, we look to the continuous case, in which the reference space is given by $(\mathbb{T}^{n},\mathscr{L}^{n})$. Fix then $\lambda = \mathscr{L}^{n}$ in the Theorem above, and consider a parametrization $X_{t}:\mathbb{T}^{n}\rightarrow \mathbb{T}^{n}$ such that $(X_{t})_{\#}\lambda = \mu_{t}$ and $\mu_{t}=\rho_{t}\mathscr{L}^{n}$ is absolutely continuous w.r.t.\ the Lebesgue measure. If $Y_{t}\in H_{\lambda}$ is any lifting of $X_{t}$, that is to say a map that satisfies $\pi \circ Y_{t} = X_{t}$, then Theorem \ref{CorderoErasquin} grants that the Kantorovich potential $\phi_{t}$ solves the Monge-Ampère equation
	\[\det(\mathbb{I}+\nabla^{2}\phi_{t})=\mu_{t},\]
	and $-\nabla \phi_{t} = Y_{t}-T_{t}\circ Y_{t}$, where $T_{t}$ is the unique optimal transport map from $\mu_{t}$ to $\lambda$. So we see that (\ref{MAG}) reduces to 
	\begin{equation}\label{MAG2}
		\frac{d^{2}}{dt^{2}}Y_{t}(a)= Y_{t}(a)-T_{t}(Y_{t}(a)),
	\end{equation}
	with $T_{t}$ equal to the unique optimal transport map from $\mu_{t}=(X_{t})_{\#}\lambda$ to $\lambda$. 
	Moreover, from (\ref{nablaoppdist}) we obtain
	\[|\nabla f_{K_{\lambda}}(Y)|^{2}= |T \circ Y - Y|^{2}= -2 f_{K_{\lambda}}(Y),\]
	suggesting an interpretation of (\ref{MAG2}) as the Euler-Lagrange equation associated to the functional
	\begin{equation}\label{funzinterpr}
		\int_{0}^{\delta}|\dot{\gamma}|^{2}+|\nabla f_{K_{\lambda}}(\gamma)|^{2},\quad \gamma:[0,\delta]\rightarrow H_{\lambda}.
	\end{equation}
	This variational reformulation appears natural in the attempt to give a meaning to system (\ref{MAG}) also in the discrete setting, where, as it is well known, Theorem \ref{CorderoErasquin} fails.  
	\paragraph{The discrete MAG model.}
	As already mentioned before, one of the aims of this work is to go deeper in the analysis of the discrete version of the Monge-Ampère gravitational model, first introduced in \cite{Brenier2015ADL} and then formalized in \cite{AmbBrMAG}. Here we choose as reference measure 
	\[\lambda = \frac{1}{m}\overset{m}{\underset{i=1}{\sum}}\delta_{a_i},\] 
	where the $a_i$'s are distinct points on $\mathbb{T}^{n}$ (think for instance to a regular lattice approximating the uniform measure). In this case, the space $H_{\lambda}$ is easily seen to be finite dimensional, and isomorphic to $\mathbb{R}^{nm}$, through the identification of a map $Y\in H_{\lambda}$ with the $m$-uple $(Y(a_{1}),\dots,Y(a_{m}))\in \left(\mathbb{R}^{n}\right)^{m}$. Under this correspondence, $K_{\lambda}$ is represented by the discrete set of all points $(b_1,\dots,b_m)$ in $\mathbb{R}^{nm}$ such that $\pi\left(\left\{b_1,\dots,b_m\right\}\right)=\left\{a_1,\dots,a_m\right\}$. By regarding, a bit improperly, the $a_i$'s as elements of $[0,1)^{n}$, the set $K_\lambda$ can be written as the union of $m!$ cubic lattices in $\mathbb{R}^{nm}$:
	\[K_{\lambda}=\underset{\sigma \in \mathfrak{S}_m}{\bigcup}(a_{\sigma(1)},\dots,a_{\sigma(m)})+\mathbb{Z}^{nm}.\]
	In this discrete scenario, the MAG model describes the motion of $m$ particles of equal mass $1/m$ in the torus $\mathbb{T}^{n}$, whose dynamics is ruled by the optimal transport problem as follows. The position of the $i$-th particle at time $t$ is denoted by $x_{i}(t)=X_{t}(a_{i})$, and a lifting of $x_{i}(t)$ to $\mathbb{R}^{n}$ by $y_{i}(t)=Y_{t}(a_i)$. The equivalent of (\ref{MAG2}) in this setting is, at least formally, 
	\begin{equation}\label{MAGdiscreto}
		\ddot{y}_{i}(t)=y_{i}(t)-b_{i}^{\opt}(t),\quad i\in [1:m],
	\end{equation}
	where $(b_{1}^{\opt}(t),\dots,b_{m}^{\opt}(t))$ is the closest point to $(y_{1}(t),\dots,y_{m}(t))$ in $K_{\lambda}$. The system (\ref{MAGdiscreto}) is easily seen to be ill posed, because of the general non uniqueness of the projection on $K_\lambda$, ultimately due to the nonuniqueness of the optimal transport map in the discrete setting, in contrast with the absolutely continuous one. As already pointed out in \cite{AmbBrMAG}, in order to fix this problem, it is convenient to switch to a variational reformulation of the dynamical system, by considering an action functional of type (\ref{funzinterpr}). Therefore, relying on a least action principle, we say that $y\in AC([0,\delta], \mathbb{R}^{nm})$ is a solution of the discrete MAG model if it is a local minimizer of the functional 
	\[\int_{0}^{\delta}|\dot{\gamma}|^{2}+|\nabla f_{K_{\lambda}}(\gamma)|^{2}\]
	subject to endpoints constraints. In the next secion, we are going to study a more general functional in which $K_{\lambda}$ is replaced by a generic discrete set $K$ in $\mathbb{R}^{d}$. 
	
	Before concluding this part, we would like to briefly turn the attention of the reader to an analoguous Lagrangian problem in the space of probability measures $(\mathscr{P}(\mathbb{T}^{n}),W_{2})$. This can be obtained from MAG by dropping the parametric description of the gravitating matter, required by the Hilbertian setting of subsection \ref{subsecactionfunc}, and directly considering the evolution of a probability measure $\mu_{t}$ in $\mathbb{T}^{n}$. 
	
	\paragraph{A related Lagrangian problem in $(\mathscr{P}(\mathbb{T}^{n}),W_{2})$.}
	Far from being limited to the Hilbertian context, functionals of type (\ref{primafunc}) can be considered in a much more general metrict setting, provided that we interpret $|\nabla f(x)|$ as the descending slope $|\nabla^{-}f|(x)$ defined in (\ref{descendingslope}), and $|\dot{\gamma}|$ as the metric derivative of an absolutely continuous curve $\gamma:[0,1]\rightarrow X$. We avoid to repeat all the constructions in this new scenario (see \cite{AmbBre21} for a systematic introduction), and prefer to immediately specialize to our case of interest. We take $(X,d)=(\mathscr{P}(\mathbb{T}^{n}), W_{2})$ the space of probability measures on $\mathbb{T}^{n}$ endowed with the Wasserstein distance induced by the optimal transport problem with quadratic cost. As it is well known, $(X,d)$ is compact, geodesic and positively curved (see \cite{AGS}). Given a \lq\lq reference'' probability measure $\lambda \in \mathscr{P}(\mathbb{T}^{n})$, we consider the opposite squared distance function from $\lambda$, namely
	\[f_{\lambda}(\mu)=-\frac{W_{2}^{2}(\mu, \lambda)}{2}.\]
	Since $(X,d)$ is positively curved, we easily deduce that $f_{\lambda}$ is $(-1)$-convex (in the metric setting, convexity has to be intended along geodesics). Moreover, we can bound the descending slope of $f_{\lambda}$ at $\mu$ as follows:
	\begin{equation}\label{bounddescendingslope}
		|\nabla^{-}f_{\lambda}|(\mu)=\underset{\nu \rightarrow \mu}{\limsup}\,\frac{[W_{2}^{2}(\nu, \lambda)-W_{2}^{2}(\mu,\lambda)]^{+}}{2W_{2}(\mu, \nu)}\le \underset{\nu \rightarrow \mu}{\limsup}\,\frac{W_{2}(\nu,\lambda)+W_{2}(\mu,\lambda)}{2}=W_{2}(\mu,\lambda).
	\end{equation} 
	A precise characterization of $|\nabla^{-}f_{\lambda}|(\mu)$ is given in \cite[Theorem 10.4.12]{AGS}, and involves the minimal $L^{2}$ norm of the barycentric projection of optimal transport plans. 
	Inspired by the MAG model, and in particular by formulas (\ref{wassersteindist}) and (\ref{nablaoppdist}), one could study the Lagrangian problem associated to the lower semicontinuous functional $I^{\delta}_{f_{\lambda},\mu_{0},\mu_{\delta}}:C([0,\delta],X)\rightarrow [0,+\infty]$ defined by
	\[I^{\delta}_{f_{\lambda},\mu_{0},\mu_{\delta}}(\gamma)=
	\begin{cases}
		\displaystyle\int_{0}^{\delta}|\dot{\gamma}|^{2}+|\nabla^{-} f_{\lambda}|^{2}(\gamma)\quad &\text{if $\gamma \in AC([0,\delta],X)$, and $\gamma(0)=\mu_{0}$, $\gamma(\delta)=\mu_{\delta}$},\\
		+\infty  &\text{otherwise}.
	\end{cases}
	\]
	From the compactness of $X$, we immediately obtain the existence of minimizers of $I^{\delta}_{f_{\lambda},\mu_{0},\mu_{\delta}}$. In addition, by exploiting a generalization of Theorem \ref{stab} to the general metric setting provided by \cite[Theorem 17]{AmbBre21}, as well as the bound on the descending slope (\ref{bounddescendingslope}), we obtain the following stability result:
	
	\begin{prop}
		Let $\lambda_{j}, \lambda \in \mathscr{P}(\mathbb{T}^{d})$ be reference measures, and $\mu_{j,0},\mu_{j,\delta},\mu_{0},\mu_{\delta}\in \mathscr{P}(\mathbb{T}^{d})$ be endpoints. Suppose that 
		\begin{itemize}
			\item[i)] $\lambda_{j}\rightarrow \lambda$ in $W_{2}$.
			\item[ii)] $\mu_{j,i}\rightarrow \mu_{i}$ in $W_{2}$, for $i=0,\delta$.
		\end{itemize} 
		Then $I^{\delta}_{f_{\lambda_{j}},\mu_{j,0},\mu_{j,\delta}}$ $\Gamma$-converge to $I^{\delta}_{f_{\lambda},\mu_{0},\mu_{\delta}}$ in the $C([0,\delta],X)$ topology. Moreover, we have convergence of minimal values to minimal values and of minimizers to minimizers.
	\end{prop}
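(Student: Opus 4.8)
The plan is to deduce the $\Gamma$-convergence from the metric counterpart of Theorem \ref{stab}, namely \cite[Theorem 17]{AmbBre21}, and then to upgrade it to convergence of minimal values and of minimizers by the usual equi-coercivity argument, which here comes essentially for free from the compactness of $X=(\mathscr{P}(\mathbb{T}^{d}),W_{2})$. The first step is therefore to check that the hypotheses of \cite[Theorem 17]{AmbBre21} — the metric transcription of those of Theorem \ref{stab} — are satisfied by the sequence $(f_{\lambda_{j}})$ and by the endpoints $(\mu_{j,0},\mu_{j,\delta})$: namely (a) uniform $\lambda$-convexity of the $f_{\lambda_{j}}$; (b) a suitable convergence $f_{\lambda_{j}}\to f_{\lambda}$, which must in particular entail convergence of the slope functionals $|\nabla^{-}f_{\lambda_{j}}|^{2}$ occurring in the Lagrangian; (c) convergence of the endpoints; (d) a uniform bound on $|\nabla^{-}f_{\lambda_{j}}|$ at the endpoints.

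Conditions (a) and (c) are immediate: (a) holds with $\lambda=-1$, since $(X,W_{2})$ is positively curved and so every $f_{\nu}=-\tfrac12 W_{2}^{2}(\cdot,\nu)$ is $(-1)$-convex along geodesics, as already observed; (c) is assumption (ii). For (b) I would start from the elementary bound $|W_{2}(\mu,\lambda_{j})-W_{2}(\mu,\lambda)|\le W_{2}(\lambda_{j},\lambda)$, valid for every $\mu$, which together with assumption (i) and the finiteness of $D:=\mathrm{diam}(X)$ gives $\sup_{\mu}|f_{\lambda_{j}}(\mu)-f_{\lambda}(\mu)|\le D\,W_{2}(\lambda_{j},\lambda)\to 0$; thus $f_{\lambda_{j}}\to f_{\lambda}$ uniformly on the compact space $X$, which is the strongest form of convergence one could hope for and in particular implies the one required of the functions in \cite[Theorem 17]{AmbBre21}. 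To obtain the accompanying convergence of the slopes one feeds this uniform convergence into the characterization of $|\nabla^{-}f_{\nu}|(\mu)$, either via the metric analogue of the sup-formula (\ref{extgrad}) or via the description through barycentric projections of optimal plans recalled in \cite[Theorem 10.4.12]{AGS}. Finally, (d) follows from the bound (\ref{bounddescendingslope}) applied with $\lambda_{j}$ in place of $\lambda$, which gives $|\nabla^{-}f_{\lambda_{j}}|(\mu_{j,i})\le W_{2}(\mu_{j,i},\lambda_{j})\le D<\infty$, uniformly in $j$ and in $i\in\{0,\delta\}$. With all hypotheses in force, \cite[Theorem 17]{AmbBre21} yields the $\Gamma$-convergence of $I^{\delta}_{f_{\lambda_{j}},\mu_{j,0},\mu_{j,\delta}}$ to $I^{\delta}_{f_{\lambda},\mu_{0},\mu_{\delta}}$ in the $C([0,\delta],X)$ topology.

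It remains to pass from $\Gamma$-convergence to convergence of minima and minimizers, i.e.\ to establish equi-coercivity. If $(\gamma_{j})\subset C([0,\delta],X)$ satisfies $\sup_{j}I^{\delta}_{f_{\lambda_{j}},\mu_{j,0},\mu_{j,\delta}}(\gamma_{j})=:C<\infty$, then each $\gamma_{j}\in AC([0,\delta],X)$ and, for $0\le s\le t\le\delta$,
\[W_{2}(\gamma_{j}(s),\gamma_{j}(t))\le\int_{s}^{t}|\dot{\gamma}_{j}|\le|t-s|^{1/2}\Big(\int_{0}^{\delta}|\dot{\gamma}_{j}|^{2}\Big)^{1/2}\le(C\,|t-s|)^{1/2},\]
so the $\gamma_{j}$ are equi-H\"older of exponent $\tfrac12$ and valued in the compact space $X$; by Arzel\`a--Ascoli the sequence is relatively compact in $C([0,\delta],X)$. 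Since moreover each $I^{\delta}_{f_{\lambda_{j}},\mu_{j,0},\mu_{j,\delta}}$ is lower semicontinuous and — $X$ being compact and geodesic — admits a minimizer by the direct method (as already noted for the limit functional), the fundamental theorem of $\Gamma$-convergence gives $\min I^{\delta}_{f_{\lambda_{j}},\mu_{j,0},\mu_{j,\delta}}\to\min I^{\delta}_{f_{\lambda},\mu_{0},\mu_{\delta}}$ together with the precompactness of any sequence of minimizers and the fact that each of its limit points minimizes $I^{\delta}_{f_{\lambda},\mu_{0},\mu_{\delta}}$. The step I expect to require the most care is (b): passing from the (easy) uniform convergence $f_{\lambda_{j}}\to f_{\lambda}$ to the convergence of the slope functionals in the precise sense demanded by \cite[Theorem 17]{AmbBre21} is delicate because of the possible non-uniqueness of optimal plans towards $\lambda$, which affects the continuity of $\nu\mapsto|\nabla^{-}f_{\nu}|$; this is where the barycentric-projection characterization has to be used carefully. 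Everything else is routine.
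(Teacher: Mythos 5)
Your proof is correct and follows the same route the paper intends: the Proposition is obtained by verifying the hypotheses of the metric stability theorem \cite[Theorem 17]{AmbBre21} (uniform $(-1)$-convexity, uniform convergence of $f_{\lambda_j}$ to $f_\lambda$ from $W_2(\lambda_j,\lambda)\to 0$, endpoint convergence, and the uniform slope bound at the endpoints coming from (\ref{bounddescendingslope})), and then upgrading $\Gamma$-convergence to convergence of minima and minimizers via the compactness of $(\mathscr{P}(\mathbb{T}^{d}),W_2)$ and Arzel\`a--Ascoli. Your only hesitation, about separately verifying convergence of the slope functionals, is not an extra hypothesis you need to supply: in the cited stability theorem the lower semicontinuity behaviour of the slopes under convergence of uniformly $\lambda$-convex functions is part of the theorem's internal machinery, so the checks you already made suffice.
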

	\section{The case of the opposite squared distance function in $\mathbb{R}^{d}$}\label{secoppsqdistfunc}
	In this section the main results of the paper will be derived. We study functionals of type (\ref{funzionaleconendpoits}) in the special case in which $H=\mathbb{R}^{d}$ and $f=f_{K}$ is the opposite squared distance function from a closed subset $K\subseteq \mathbb{R}^{d}$. Motivated by the variational reformulation of the discrete MAG model, derived in the previous section, we will in particular focus on the case in which $K$ is a discrete collection of points in $\mathbb{R}^{d}$. In this last setting, we will exploit the geometrical structure given by the associated Voronoi decomposition of the space in order to get regularity for local minimizers out of a finite number of \lq\lq shock times''.
	
	Given a closed set $K\subseteq \mathbb{R}^{d}$, we consider the opposite squared distance function from $K$, defined by
	\begin{equation}\label{oppdistrn}
		f_{K}(x):=-\frac{\dist^{2}_{K}(x)}{2}=-\underset{y\in K}{\min}\frac{|x-y|^{2}}{2}.
	\end{equation}
	Notice that the infimum in (\ref{oppdistfunc}) is always attained here, due to the local compactness of the ambient space. The convex function 
	\begin{equation}\label{goppdistrn}
		g_{K}(x):=\underset{y\in K}{\max}\left(x\cdot y-\frac{|y|^2}{2}\right)
	\end{equation}
	satisfies $g_{K}(x)=f_{K}(x)+\frac{|x|^{2}}{2}$, thus implying the $(-1)$-convexity of $f_{K}$. We fix a potential shape $h:[0,+\infty)\rightarrow [0,+\infty)$ of class $C^{1}$ and consider the action functional $I_{f_{K}, x_{0}, x_{\delta}}^{\delta}:C([0,\delta],\mathbb{R}^{d})\rightarrow [0,+\infty]$ defined by
	\begin{equation*}
		I^{\delta}_{f_{K},x_{0},x_{\delta}}(\gamma):=
		\begin{cases}
			\displaystyle\int_{0}^{\delta}|\dot{\gamma}|^2+h(|\nabla f_{K}|^{2}(\gamma))\quad &\text{if $\gamma \in AC([0,\delta], \mathbb{R}^{d})$ and $\gamma(0)=x_{0}, \gamma(\delta)=x_{\delta}$},\\
			+\infty  &\text{otherwise}.  	
		\end{cases}
	\end{equation*} 
	
	\noindent We stress that $\nabla f_{K}$ has to be intended as an extended gradient, because $f_{K}$ is differentiable only at those points in which the projection on $K$ is unique. 
	
	In order to get a useful characterization of $\nabla f_{K}$, we need a well known Lemma of convex analysis providing an explicit formula for the subdifferential at $x$ of the maximum of a family of convex functions, under suitable assumptions (see \cite{ConvAnal}).
	\begin{lem}[Subdifferential of the sup function]
		\label{subdiffsupfunc}
		Let $\left\{g_{\alpha}:\mathbb{R}^{d}\rightarrow \mathbb{R}\right\}_{\alpha \in \mathcal{A}}$ be a collection of convex functions indexed on a compact metric space $\mathcal{A}$, and suppose that $\alpha \mapsto g_{\alpha}(x)$ is upper semicontinuous for every $x\in \mathbb{R}^{d}$. We consider the supremum function 
		\[g:=\underset{\alpha\in \mathcal{A}}{\sup}\,g_{\alpha}.\]
		Then, if the supremum in the definition of $g(x)$ is attained, the following formula holds for the subdifferential of g at $x$:
		\[
		\partial g(x)=\conv \left(\bigcup \left\{\partial g_{\alpha}(x): g_{\alpha}(x)=g(x)\right\}\right).
		\]   
	\end{lem}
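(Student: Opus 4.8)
The proof will follow the classical two-inclusion scheme, the nontrivial part being reduced to a statement about one-sided directional derivatives. Write $\mathcal{A}(x):=\{\alpha\in\mathcal{A}:g_{\alpha}(x)=g(x)\}$ for the active set, and set $g'_{\alpha}(x;v):=\lim_{t\to0^{+}}t^{-1}(g_{\alpha}(x+tv)-g_{\alpha}(x))$ and $g'(x;v):=\lim_{t\to0^{+}}t^{-1}(g(x+tv)-g(x))$ for the directional derivatives, which exist because the functions are finite and convex. The inclusion $\supseteq$ is immediate: if $\alpha\in\mathcal{A}(x)$ and $\xi\in\partial g_{\alpha}(x)$, then $g(y)\ge g_{\alpha}(y)\ge g_{\alpha}(x)+\langle\xi,y-x\rangle=g(x)+\langle\xi,y-x\rangle$ for every $y$, so $\xi\in\partial g(x)$; since $\partial g(x)$ is convex it then contains $\conv\big(\bigcup_{\alpha\in\mathcal{A}(x)}\partial g_{\alpha}(x)\big)$.

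Before the reverse inclusion I would record the structural facts that keep all objects compact and finite. Since $\mathcal{A}$ is compact and $\alpha\mapsto g_{\alpha}(y)$ is upper semicontinuous, this map is bounded above and attains its maximum for every $y$; hence $g$ is everywhere finite and, being a sup of convex functions, it is a finite convex function on $\mathbb{R}^{d}$, thus locally Lipschitz, with $g'(x;\cdot)$ finite and equal to the support function of the nonempty compact convex set $\partial g(x)$ (and likewise for each $g_{\alpha}$). The active set $\mathcal{A}(x)$ is closed, hence compact. Finally $U:=\bigcup_{\alpha\in\mathcal{A}(x)}\partial g_{\alpha}(x)$ is bounded, since $|\eta|=\sup_{|v|=1}\langle\eta,v\rangle\le\sup_{|v|=1}g'_{\alpha}(x;v)\le\sup_{|v|=1}g'(x;v)<+\infty$ for $\eta\in\partial g_{\alpha}(x)$, and closed: if $\eta_{n}\in\partial g_{\alpha_{n}}(x)$ with $\alpha_{n}\in\mathcal{A}(x)$, $\alpha_{n}\to\bar\alpha$, $\eta_{n}\to\eta$, then passing to the $\limsup$ in $g_{\alpha_{n}}(y)\ge g(x)+\langle\eta_{n},y-x\rangle$ and using upper semicontinuity yields $g_{\bar\alpha}(y)\ge g_{\bar\alpha}(x)+\langle\eta,y-x\rangle$, i.e.\ $\eta\in\partial g_{\bar\alpha}(x)\subseteq U$. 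So $U$ is compact and, by Carath\'eodory's theorem, $\conv(U)$ is compact; in particular it is closed, which is why no closure appears in the statement.

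Granting this, $\partial g(x)\subseteq\conv(U)$ is equivalent, via the Hahn--Banach separation theorem and the identity $\sup_{\eta\in\conv(U)}\langle\eta,v\rangle=\sup_{\alpha\in\mathcal{A}(x)}g'_{\alpha}(x;v)$, to the directional-derivative inequality $g'(x;v)\le\sup_{\alpha\in\mathcal{A}(x)}g'_{\alpha}(x;v)$ for all $v$ (the opposite inequality being clear, since for active $\alpha$ one has $t^{-1}(g(x+tv)-g(x))\ge t^{-1}(g_{\alpha}(x+tv)-g_{\alpha}(x))$). To prove it I would argue by contradiction: suppose $g'(x;v)>c>\sup_{\alpha\in\mathcal{A}(x)}g'_{\alpha}(x;v)$ for some $c\in\mathbb{R}$ (the right-hand side being $<+\infty$ because $g'_{\alpha}(x;v)\le g_{\alpha}(x+v)-g_{\alpha}(x)\le g(x+v)-g(x)$ on $\mathcal{A}(x)$). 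Choose $t_{n}\downarrow0$ and, using that the sup defining $g$ is attained, points $\alpha_{n}$ with $g_{\alpha_{n}}(x+t_{n}v)=g(x+t_{n}v)$; pass to a subsequence with $\alpha_{n}\to\bar\alpha$. From the convexity inequality $g_{\alpha_{n}}(x+t_{n}v)\le\frac12 g_{\alpha_{n}}(x)+\frac12 g_{\alpha_{n}}(x+2t_{n}v)\le\frac12 g_{\alpha_{n}}(x)+\frac12 g(x+2t_{n}v)$, together with $g_{\alpha_{n}}(x)\le g(x)$ and the continuity of $g$, one gets $g_{\alpha_{n}}(x)\to g(x)$; combined with upper semicontinuity this also gives $g_{\bar\alpha}(x)=g(x)$, so $\bar\alpha\in\mathcal{A}(x)$. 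Since $g'(x;v)>c$ forces $g(x+t_{n}v)>g(x)+ct_{n}\ge g_{\alpha_{n}}(x)+ct_{n}$, monotonicity of difference quotients of the convex function $g_{\alpha_{n}}$ gives $g_{\alpha_{n}}(x+sv)>g_{\alpha_{n}}(x)+cs$ for every fixed $s>0$ and all $n$ large; letting $n\to\infty$, using $g_{\alpha_{n}}(x)\to g_{\bar\alpha}(x)$ and $\limsup_{n}g_{\alpha_{n}}(x+sv)\le g_{\bar\alpha}(x+sv)$, yields $g_{\bar\alpha}(x+sv)\ge g_{\bar\alpha}(x)+cs$, whence $g'_{\bar\alpha}(x;v)\ge c$, contradicting $\bar\alpha\in\mathcal{A}(x)$.

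The one delicate point I anticipate is exactly the passage just described: upper semicontinuity of $\alpha\mapsto g_{\alpha}(\cdot)$ only furnishes $\limsup$-type bounds, so to identify $\lim_{n}g_{\alpha_{n}}(x)$ with $g(x)$ and to transfer the slope bound from the moving point $x+t_{n}v$ to a fixed direction one must genuinely exploit the convexity of the individual $g_{\alpha_{n}}$ together with the sandwich $g_{\alpha_{n}}\le g$ and the continuity of the finite convex envelope $g$. Once this step is in place, the remaining ingredients — support functions of subdifferentials of finite convex functions, Carath\'eodory's theorem, and the separation argument — are entirely standard.
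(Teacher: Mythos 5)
Your proof is correct. Note that the paper offers no proof of this lemma at all: it is quoted as a known result with a reference to Hiriart-Urruty--Lemar\'echal, and your argument is essentially the standard one found there (the Valadier/Ioffe--Tikhomirov max-formula proof): the easy inclusion plus the reduction, via support functions and separation, of $\partial g(x)\subseteq\conv\bigl(\bigcup_{\alpha\in\mathcal{A}(x)}\partial g_{\alpha}(x)\bigr)$ to the directional-derivative inequality $g'(x;v)\le\sup_{\alpha\in\mathcal{A}(x)}g'_{\alpha}(x;v)$, which you then establish by a compactness/upper-semicontinuity argument. The points that genuinely need care are all handled: attainment of the supremum at every point (automatic from compactness of $\mathcal{A}$ and upper semicontinuity, so using maximizers at $x+t_{n}v$ is legitimate even though the statement only mentions attainment at $x$), closedness and boundedness of $\bigcup_{\alpha\in\mathcal{A}(x)}\partial g_{\alpha}(x)$ so that its convex hull is closed and no closure operation is needed in the formula, and the transfer of the slope bound from the moving points $x+t_{n}v$ to a fixed $s>0$ via monotonicity of difference quotients before passing to the $\limsup$, which is exactly how one gets around having only one-sided continuity in $\alpha$.
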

	\noindent We call $\opt_{K}(x)$ the compact subset of $K$ containing all the points that minimize the distance from $x$:
	\[\opt_{K}(x):=\left\{y \in K: {\dist}_{K}(x)=|x-y|\right\}=\left\{y\in K: g_{K}(x)=x\cdot y-\frac{|y|^2}{2}\right\}.\]
	In the sequel we will refer to $\opt_{K}(x)$ as the \textit{optimality class} of $x$. Applying Lemma \ref{subdiffsupfunc} we get:
	\begin{prop}[Subdifferential of the opposite squared distance function]
		\label{propsuboppdist}
		Let $K\subseteq \mathbb{R}^{d}$ be a closed set, and let $f_K, g_K$ be defined as in (\ref{oppdistrn}) and (\ref{goppdistrn}). Then
		\begin{itemize}
			\item[i)] The subdifferential of $g_K$ at $x$ is given by
			\[\partial g_{K}(x)=\conv(\opt_{K}(x)).\]
			\item[ii)] The subdifferential of $f_{K}$ at $x$ is given by
			\[\partial f_{K}(x)=\conv(\opt_{K}(x))-x.\]
			Moreover, denoting by $\eta_{K}(x)$ the unique projection of $x$ on the closed convex set $\conv(\opt_{K}(x))$, the following formula holds for the extended gradient of $f_{K}$ at $x$:
			\begin{equation}\label{extgroppdistfunc}
				\nabla f_{K}(x)=\eta_{K}(x)-x.
			\end{equation}
			\item[iii)] The point $\eta_{K}(x)$ depends only on the optimality class of $x$. That is to say, $\eta_{K}(x)=\eta_{K}(y)$, whenever $\opt_{K}(x)=\opt_{K}(y)$.
		\end{itemize}  
	\end{prop}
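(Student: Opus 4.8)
The plan is to establish the three items in the stated order, bootstrapping from the convex‑analytic Lemma \ref{subdiffsupfunc} for (i), a smooth‑perturbation identity for (ii), and an elementary orthogonality observation for (iii).

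For (i), I would apply Lemma \ref{subdiffsupfunc} to the affine functions $g_{y}(z):=z\cdot y-\frac{|y|^{2}}{2}$, whose supremum over $y\in K$ is exactly $g_{K}$ and whose individual subdifferentials are the singletons $\partial g_{y}(x)=\{y\}$. Since that lemma requires a compact index set and $K$ need only be closed, I would first localize: as the subdifferential of a convex function at a point is determined by its values on any neighbourhood of that point, I may replace $K$ by the compact set $K':=K\cap\overline{B}(x,\dist_{K}(x)+1)$, for which one checks immediately that $g_{K'}=g_{K}$ near $x$ and $\opt_{K'}(x)=\opt_{K}(x)$, hence $\partial g_{K}(x)=\partial g_{K'}(x)$. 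Now $y\mapsto g_{y}(z)$ is continuous (so upper semicontinuous) and the sup defining $g_{K'}(x)$ is attained by compactness, so Lemma \ref{subdiffsupfunc} gives
\[\partial g_{K}(x)=\partial g_{K'}(x)=\conv\Big(\bigcup\big\{\{y\}:g_{y}(x)=g_{K'}(x)\big\}\Big)=\conv(\opt_{K}(x)).\]

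For (ii), I would use $f_{K}=g_{K}-\tfrac12|\cdot|^{2}$. Writing the convexity inequality that defines $\xi\in\partial g_{K}(x)$ and rearranging with the identity $\tfrac12|x|^{2}+x\cdot(z-x)-\tfrac12|z|^{2}=-\tfrac12|z-x|^{2}$, one sees it is equivalent to $f_{K}(z)\ge f_{K}(x)+\langle\xi-x,z-x\rangle-\tfrac12|z-x|^{2}$ for all $z$, i.e.\ to $\xi-x\in\partial f_{K}(x)$ by the non‑asymptotic characterization of the subdifferential of a $(-1)$‑convex function. Thus $\partial f_{K}(x)=\partial g_{K}(x)-x=\conv(\opt_{K}(x))-x$, which is nonempty and compact convex since $\opt_{K}(x)$ is compact. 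Its minimal‑norm element $\nabla f_{K}(x)$ is the projection of $0$ onto $\conv(\opt_{K}(x))-x$, which equals $\eta_{K}(x)-x$ (the projection of $x$ onto $\conv(\opt_{K}(x))$, shifted by $-x$); this is (\ref{extgroppdistfunc}). Finally, for (iii), set $F:=\opt_{K}(x)$. Every $y\in F$ satisfies $|x-y|=\dist_{K}(x)$, so $x$ lies in the affine set $E_{F}:=\{z:|z-y|=|z-y'|\ \forall\,y,y'\in F\}$; each relation $|z-y|^{2}=|z-y'|^{2}$ is affine with direction vector $y-y'$, so the linear direction of $E_{F}$ is the orthogonal complement of $V:=\operatorname{span}\{y-y':y,y'\in F\}$, which is precisely the direction of $\operatorname{aff}(F)$. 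Hence if $\opt_{K}(w)=F$ as well, then $x,w\in E_{F}$, so $x-w\perp V$, meaning $x$ and $w$ have the same orthogonal projection onto $\operatorname{aff}(F)$; since $\conv(F)\subseteq\operatorname{aff}(F)$, a Pythagoras argument shows that projection onto $\conv(F)$ factors through projection onto $\operatorname{aff}(F)$, whence $\eta_{K}(x)=\eta_{K}(w)$.

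None of the steps is deep. The two points that deserve a little care are the localization to a compact index set that makes Lemma \ref{subdiffsupfunc} directly applicable when $K$ is unbounded, and, in (iii), the verification that the nearest‑point projection onto $\conv(F)$ equals the nearest‑point projection onto $\conv(F)$ of the projection onto $\operatorname{aff}(F)$ — a routine orthogonal‑decomposition argument that is nonetheless the crux of why $\eta_{K}$ depends only on the optimality class.
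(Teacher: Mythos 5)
Your proposal is correct and follows essentially the same route as the paper: localization to a compact subset of $K$ before invoking Lemma \ref{subdiffsupfunc}, passing from $\partial g_K$ to $\partial f_K$ by subtracting the smooth quadratic (your explicit rearrangement via the non-asymptotic $(-1)$-convex characterization is just the sum rule spelled out), and the same orthogonality/Pythagoras argument showing that the projection onto $\conv(\opt_K(x))$ factors through the common projection onto its affine hull.
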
 
	\begin{proof}
		Point i) easily follows from Lemma \ref{subdiffsupfunc} if $K$ is compact. To deal with the general case it is enough to notice that for every $x\in \mathbb{R}^{d}$ and every radius $R>\dist_{K}(x)$, we have $g_{K}=g_{K\cap \overline{B_{R}(x)}}$ in a neighborhood of $x$. The formula for $\partial f_{K}(x)$ is a consequence of the rule for the subdifferential of the sum of two functions, one of which smooth. Then, by definition, $\nabla f_{K}(x)$ is the projection of $0$ on $\partial f_{K}(x)=\conv(\opt_{K}(x))-x$, and formula (\ref{extgroppdistfunc}) follows after a translation of $x$. Let us now address point iii). Suppose that $x$ and $y$ share the same optimality class, $\opt_{K}(x)=\opt_{K}(y)$. Consider the affine space $A$ spanned by $\opt_{K}(x)$ and its orthogonal space $B$ passing through $x$. From the hypothesis on $x$ and $y$ we deduce that also $y$ belongs to $B$. Then, denoting by $p$ the point of intersection of $A$ and $B$, by orthogonality we have:
		\[|x-z|^2=|x-p|^2+|p-z|^2,\quad |y-z|^2=|y-p|^2+|p-z|^2\quad \text{for every $z\in {\conv}(\opt_{K}(x)).$}\]
		Hence, both distances are minimized by the point $z$ obtained by projecting $p$ on ${\conv}(\opt_{K}(x))$, thus $\eta_{K}(x)=\eta_{K}(y)$.
	\end{proof}
	\begin{oss}
		From (\ref{extgroppdistfunc}) we deduce that the potential term $|\nabla f_{K}(x)|^2$ is always less than or equal to $-2f_{K}(x)=\dist_{K}^{2}(x)$, and equality holds if and only if $x$ has a unique projection on $K$. It is interesting to see what this means for the MAG model. Using the notation of subsection \ref{subsectionMAG}, the potential of a configuration $(y_{1},\dots,y_{m})\in \mathbb{R}^{nm}$ is always smaller than $W_{2}(\mu,\lambda)$, where, setting $x_i = \pi(y_{i})$,
		\[\lambda = \frac{1}{m}\overset{m}{\underset{i=1}{\sum}}\delta_{a_i},\quad 	\mu = \frac{1}{m}\overset{m}{\underset{i=1}{\sum}}\delta_{x_i}
		\]
		and $W_{2}$ is the Wasserstein distance in $\mathscr{P}_{2}(\mathbb{T}^{n})$. Moreover, equality holds if and only if there exists a unique optimal transport map from $\mu$ to $\lambda$. So we see that in the context of the MAG model, the potential term should be interpreted as a \lq\lq measure of the ambiguity in the optimal transport problem''. Tipical manifestations of ambiguity in the discrete scenario appear when two or more particles collapse, thus sharing the same position in $\mathbb{T}^{n}$. Compare also this phenomenon with the continuous framework of Theorem \ref{CorderoErasquin}, where this ambiguity does not occur.\fr 
	\end{oss}
	To end this part, we briefly come back to the matter of stability in this specialized context, stating the following Corollary of Theorem \ref{stab}:
	\begin{cor}
		\label{stab2}
		Let $K_{j}, K$ be closed subsets of $\mathbb{R}^{d}$, and let $x_{j,0}, x_{j,\delta}, x_{0}, x_{\delta}\in \mathbb{R}^{d}$. Suppose that 
		\begin{itemize}
			\item[i)] $K_{j}\rightarrow K$ in the sense of Hausdorff in every compact set.
			\item[ii)] $x_{j,i}\rightarrow x_{i}$ for $i= 0,\delta$.
		\end{itemize}
		Then $I^{\delta}_{f_{K_{j}},x_{j,0},x_{j,\delta}}$ $\Gamma$-converge to $I^{\delta}_{f_{K},x_{0},x_{\delta}}$ in the $C([0,\delta],\mathbb{R}^{d})$ topology. Moreover, we have convergence of minimal values to minimal values and of minimizers to minimizers.
	\end{cor}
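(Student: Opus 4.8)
The plan is to deduce Corollary \ref{stab2} directly from the general stability statement Theorem \ref{stab}, by checking that the three hypotheses there translate into the two hypotheses of the corollary in the specialization $H=\mathbb{R}^d$, $f_j=f_{K_j}$, $f=f_K$. Since $h$ plays no role in the reduction (Theorem \ref{stab} holds for general $h$ with minor modifications), the only real content is the following implication: if $K_j\to K$ in the Hausdorff sense on every compact set, then $f_{K_j}\to f_{K}$ in the sense of Mosco convergence, the functions being uniformly $(-1)$-convex by construction. I would therefore structure the proof in two steps: (1) Mosco convergence of the opposite squared distances from Hausdorff convergence of the sets; (2) verification of the equi-coercivity/boundedness hypotheses and conclusion.

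For step (1), recall that Mosco convergence $f_{K_j}\to f_K$ means: (a) for every $x_j\to x$ one has $\liminf_j f_{K_j}(x_j)\ge f_K(x)$ (weak $\liminf$ inequality, which in finite dimension is just the $\liminf$ along convergent sequences), and (b) for every $x$ there exist $x_j\to x$ with $\limsup_j f_{K_j}(x_j)\le f_K(x)$ (recovery sequence). Here both are easy: I claim that $\mathrm{dist}_{K_j}\to \mathrm{dist}_{K}$ \emph{locally uniformly}, from which $f_{K_j}=-\tfrac12\mathrm{dist}_{K_j}^2\to f_K$ locally uniformly, and local uniform convergence trivially implies Mosco convergence (take $x_j=x$ for the recovery sequence; use continuity for the $\liminf$). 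To prove the local uniform convergence of the distance functions: fix a compact set $C$ and $\varepsilon>0$; by Hausdorff convergence on the compact enlargement $\overline{C}_R:=\{z:\mathrm{dist}(z,C)\le R\}$ for $R$ large enough to contain all relevant nearest points, for $j$ large every point of $K\cap \overline{C}_R$ is within $\varepsilon$ of $K_j$ and vice versa; a standard $1$-Lipschitz estimate for distance functions then gives $|\mathrm{dist}_{K_j}(x)-\mathrm{dist}_{K}(x)|\le\varepsilon$ for all $x\in C$. One mild point to handle carefully is that $\mathrm{dist}_K(x)$ is realized at a point of $K$ lying in a bounded region uniformly over $x\in C$ and over large $j$, so that Hausdorff convergence \emph{on a fixed compact set} suffices; this is where the hypothesis ``in every compact set'' is used.

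For step (2), I need hypothesis iii) of Theorem \ref{stab}, namely $\sup_j |\nabla f_{K_j}|(x_{j,i})<\infty$ for $i=0,\delta$. By the Remark following Proposition \ref{propsuboppdist}, $|\nabla f_{K_j}(x)|^2\le \mathrm{dist}_{K_j}^2(x)$, so it suffices that $\mathrm{dist}_{K_j}(x_{j,i})$ stays bounded; but $x_{j,i}\to x_i$ and $\mathrm{dist}_{K_j}\to\mathrm{dist}_{K}$ locally uniformly (step 1), so $\mathrm{dist}_{K_j}(x_{j,i})\to \mathrm{dist}_{K}(x_i)<\infty$, giving the bound. Together with hypothesis ii) of the corollary, which is exactly hypothesis ii) of the theorem, and the uniform $(-1)$-convexity, all hypotheses of Theorem \ref{stab} are met, so $I^{\delta}_{f_{K_j},x_{j,0},x_{j,\delta}}$ $\Gamma$-converges to $I^{\delta}_{f_{K},x_{0},x_{\delta}}$ in $C([0,\delta],\mathbb{R}^d)$. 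Finally, to upgrade $\Gamma$-convergence to convergence of minima and minimizers, one invokes the standard fundamental theorem of $\Gamma$-convergence, which requires equi-coercivity of the family: here this is automatic since we work in $H=\mathbb{R}^d$ (finite dimensional) and, as noted in the text after the definition of the functional, the Dubois–Reymond relation $|\dot\gamma|^2=h(|\nabla f_{K_j}|^2(\gamma))+c$ together with the boundedness of $|\nabla f_{K_j}|$ on bounded sets gives uniform Lipschitz bounds on minimizers with equibounded energy, hence precompactness in $C([0,\delta],\mathbb{R}^d)$ by Arzelà–Ascoli; alternatively one observes directly that sublevel sets of the action with fixed endpoints are equi-bounded and equi-continuous.

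The only genuinely non-routine point is step (1), and within it the uniformity issue: one must make sure the Hausdorff convergence is used on a compact set large enough to capture, uniformly, the nearest-point projections of all points $x$ in the compact set $C$ of interest. This is where I expect to spend the most care; the rest is a bookkeeping application of Theorem \ref{stab} and of the general machinery of $\Gamma$-convergence.
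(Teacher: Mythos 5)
Your reduction is correct and is exactly the route the paper intends: the corollary is stated there as a direct consequence of Theorem \ref{stab}, with the implicit verification that local Hausdorff convergence of the $K_j$ gives locally uniform convergence of $\dist_{K_j}$ (hence Mosco convergence of the uniformly $(-1)$-convex functions $f_{K_j}$ in the finite-dimensional setting), that the slope bound $|\nabla f_{K_j}|(x)\le \dist_{K_j}(x)$ yields hypothesis iii), and that fixed endpoints plus bounded kinetic energy give equi-coercivity via Arzel\`a--Ascoli. Your write-up supplies precisely these steps, so no further comment is needed.
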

	\noindent As a consequence, the functional associated to a closed set $K$ can be approximated by functionals associated to $K_{j}$, where each $K_{j}$ is a finite collection of points in $\mathbb{R}^{d}$. It is the scope of the following subsection to focus on this simpler situation.     
	
	\subsection{The discrete case}
	From now on, we restrict our analysis to the case in which $K$ is given by a collection of $N$ distinct points in $\mathbb{R}^{d}$:
	\[K=\left\{p_{1},\dots,p_{N}\right\}.\]
	We are particularly interested in studying properties of local minimizers for $I^{\delta}_{f_{K},x_{0},x_{\delta}}$ because of the link with the variational reformulation of the discrete MAG model (see the discussion above). There, $K$ was an infinite discrete set, but we can clearly restrict our analysis, which is essentially local, to the case in which $K$ is finite, due to the compactness of the range of every continuous curve $\gamma:[0,\delta]\rightarrow \mathbb{R}^{d}$. Let us fix $K$, so as to be allowed to omit all the pedices involving it. Then, for instance, we will write $f, g, \eta, \opt$ in the place of $f_{K}, g_{K}, \eta_{K}, \opt_{K}$. 
	
	\paragraph{Polyhedra, Voronoi cells and potential zones.}
	We say that $P\subseteq \mathbb{R}^{d}$ is a \textit{polyhedron} if it is a nonempty closed convex set admitting a representation of the form
	\begin{equation}\label{rapprepoliedri}
		P=\overset{\ell}{\underset{j=1}{\bigcap}}\left\{x\in \mathbb{R}^{d}: T_{j}(x)\le 0\right\},
	\end{equation}
	where $\ell \in \mathbb{N}$ and $T_{j}:\mathbb{R}^{d}\rightarrow \mathbb{R}$ are affine functions. A bounded polyhedron is called a \textit{polytope}.
	
	The \textit{Voronoi partition} associated to a finite collection of points $K=\left\{p_{1},\dots,p_{N}\right\}$ is the finite decomposition $\left\{V_{H}\right\}_{H\in \mathcal{P}(K)}$ of $\mathbb{R}^{d}$, indexed by the set $\mathcal{P}(K)$ of the parts of $K$, and such that 
	\[V_{H}=\left\{x\in \mathbb{R}^{d}: \opt(x)=H\right\}.\] 
	We call $V_H$ the \textit{Voronoi cell} corresponding to the \textit{optimality class} $H$. The following are well known facts about this remarkable cellular decomposition of the space (see \cite{Voronoi}). 
	\begin{prop}[Properties of the Voronoi partition]
		Let $H\in \mathcal{P}(K)$ be such that the Voronoi cell $V_{H}$ is nonempty. Then
		\begin{itemize}
			\item[i)] $V_{H}$ is a convex set. Moreover, denoting by $A_H$ the affine space spanned by $H$, and by $B_H$ the affine space 
			\[B_{H}:=\left\{x\in \mathbb{R}^{d}: |x-p_i|=|x-p_j|, \, \forall p_i, p_j \in H\right\},\]
			we have that $A_H$ is orthogonal to $B_H$, they have complementary dimensions in $\mathbb{R}^{d}$, and $V_H$ is relatively open in $B_H$. We call $p_H$ the unique intersection point of $A_H$ and $B_H$.
			\item[ii)] The closure $\overline{V_{H}}$ is a polyhedron, whose relative boundary in $B_H$ is precisely given by the disjoint union of all the Voronoi cells $V_{L}$ with $H\subsetneq L$.    
		\end{itemize}
	\end{prop}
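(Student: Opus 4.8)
The plan is to linearize the conditions defining $V_H$ and then read off all three statements from elementary affine and convex geometry. Fix $H \in \mathcal{P}(K)$ with $V_H \neq \emptyset$ and a representative $p_{i_0} \in H$. The condition $\opt(x) = H$ unfolds into the equalities $|x - p_i|^2 = |x - p_{i_0}|^2$ for $p_i \in H$ together with the strict inequalities $|x - p_k|^2 > |x - p_{i_0}|^2$ for $p_k \in K \setminus H$; expanding the squares the quadratic parts cancel, so the first family becomes a system of affine equations cutting out exactly $B_H$, while the second becomes a family of strict affine inequalities $g_k(x) < 0$ with $g_k(x) := |x - p_{i_0}|^2 - |x - p_k|^2$ affine. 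Thus $V_H = B_H \cap \bigcap_{p_k \in K \setminus H}\{g_k < 0\}$, which exhibits $V_H$ as the intersection of the affine subspace $B_H$ with an open convex set; hence $V_H$ is convex and relatively open in $B_H$, and since it is nonempty it coincides with its own relative interior and has affine hull $B_H$.

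For the rest of i), I would compute the direction subspaces: rewriting the equations of $B_H$ as $2\,x \cdot (p_{i_0} - p_i) = |p_{i_0}|^2 - |p_i|^2$ shows that the direction of $B_H$ is $\{v : v \cdot (p - q) = 0 \ \text{for all } p, q \in H\}$, which is precisely the orthogonal complement of $\mathrm{span}\{p - q : p, q \in H\}$, the direction of $A_H$. So $A_H$ and $B_H$ have orthogonal and complementary direction subspaces; both are nonempty ($A_H \supseteq H$, $B_H \supseteq V_H$), and two affine subspaces whose directions are orthogonal complements meet in exactly one point, obtained by decomposing the difference of any pair of base points along $\mathbb{R}^d = \text{dir}(A_H) \oplus \text{dir}(B_H)$; this point is $p_H$.

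For ii) I would first show $\overline{V_H} = B_H \cap \bigcap_{p_k \in K \setminus H}\{g_k \le 0\} =: C$. The inclusion $\overline{V_H} \subseteq C$ is immediate since $C$ is closed and contains $V_H$; for the reverse, take $x \in V_H$ and $y \in C$ and note that for $t \in [0,1)$ the point $(1-t)x + ty$ lies in $B_H$ and satisfies $g_k((1-t)x + ty) = (1-t)g_k(x) + tg_k(y) < 0$ for every $p_k \in K \setminus H$, hence lies in $V_H$; letting $t \to 1$ gives $y \in \overline{V_H}$. Since $B_H$ is itself a finite intersection of pairs of opposite half-spaces, $C$ has a representation of the form (\ref{rapprepoliedri}) and is nonempty, so it is a polyhedron. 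The same segment argument shows $V_H$ is dense in $\overline{V_H}$; as $V_H$ is relatively open, the relative boundary of $\overline{V_H}$ in $B_H$ is exactly $\overline{V_H} \setminus V_H = C \setminus V_H$. For $x$ in this set: $x \in B_H$, so $|x - p_i| = |x - p_{i_0}|$ for all $p_i \in H$; $g_k(x) \le 0$ for all $p_k \in K \setminus H$, with equality for at least one such $p_k$ (otherwise $x \in V_H$); hence $\dist_K(x) = |x - p_{i_0}|$ and $\opt(x) = L$ where $L := H \cup \{p_k \in K \setminus H : g_k(x) = 0\} \supsetneq H$, i.e. $x \in V_L$ with $H \subsetneq L$. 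Conversely any $x \in V_L$ with $H \subsetneq L$ is equidistant from all points of $L \supseteq H$, so lies in $B_H$, has $g_k(x) = 0$ for $p_k \in L \setminus H$ and $g_k(x) < 0$ for $p_k \in K \setminus L$, hence lies in $C$, and is not in $V_H$ since $L \setminus H \neq \emptyset$; so $x \in C \setminus V_H$. Therefore $\overline{V_H} \setminus V_H = \bigcup_{H \subsetneq L} V_L$, a disjoint union because the cells $V_L$ partition $\mathbb{R}^d$.

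I do not expect a genuine obstacle: the proof is a chain of elementary reductions. The two points that need a little care are getting the closure identification right via the segment trick (rather than over- or under-shooting $\overline{V_H}$), and the bookkeeping that pins down the optimality class $L$ of a relative boundary point — this is where the modest effort concentrates.
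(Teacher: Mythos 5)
Your proof is correct, but there is nothing in the paper to compare it against: the paper states this proposition without proof, as a collection of well-known facts about Voronoi diagrams with a citation to the survey \cite{Voronoi}. Your linearization argument --- expanding the squares so that $V_H = B_H \cap \bigcap_{p_k \in K\setminus H}\{g_k<0\}$ with $g_k$ affine, reading off convexity, relative openness, the orthogonality of the direction spaces, and then identifying $\overline{V_H}$ with the closed system $\{g_k \le 0\}$ via the segment trick --- is a clean, self-contained verification of exactly what the paper takes for granted, and the bookkeeping identifying the optimality class $L = H \cup \{p_k : g_k(x)=0\}$ of a relative boundary point is right. The only step I would tighten is the passage from ``$V_H$ is relatively open and dense in $\overline{V_H}$'' to ``the relative boundary of $\overline{V_H}$ in $B_H$ equals $\overline{V_H}\setminus V_H$'': openness and density alone do not prevent the relative interior of $\overline{V_H}$ from being strictly larger than $V_H$, so you should invoke the standard convexity fact that the relative interior of the closure of a convex set coincides with its relative interior (which is proved by precisely the segment argument you already use, taking $x \in V_H$, $y$ interior to $\overline{V_H}$, and pushing slightly past $y$), or argue directly that at a boundary point some $g_k$ vanishes while being nonconstant on $B_H$, so every relative neighborhood meets $\{g_k>0\}$.
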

	In order to introduce the second fundamental decomposition associated to $K$, we also need to define, for every $\eta \in \mathbb{R}^{d}$, the sets:
	\begin{align*}
		Q_{\eta}&:=\left\{x\in \mathbb{R}^{d}: \eta(x)=\eta\right\},\\
		P_{\eta}&:=\left\{x\in \mathbb{R}^{d}: \eta \in \partial g(x)\right\}.
	\end{align*}
	The following proposition encodes the underlying geometrical structure conferred to our variational problem by the particular choice we made for the potential. It will be of fundamental importance in deriving regularity results for local minimizers of the functional $I^{\delta}_{f_{K},x_{0},x_{\delta}}$.
	\begin{prop}[Voronoi cells and potential zones]
		\label{vorcellsandpotzones}
		The following facts hold:
		\begin{itemize}
			\item[i)] The map $\eta$ is constant in each Voronoi cell, and hence has a finite range, that we denote by $\mathcal{E}$.
			\item[ii)] $\left\{Q_{\eta}\right\}_{\eta \in \mathcal{E}}$ is a partition of $\mathbb{R}^{d}$, and $x\in Q_{\eta}$ if and only if $\nabla f(x)=\eta-x$. In the sequel we will refer to the $Q_{\eta}$'s as \textit{potential zones}.
			\item[iii)] For every $\eta \in \mathcal{E}$, both $Q_{\eta}$ and $P_{\eta}$ are union of Voronoi cells and it holds $Q_{\eta} \subseteq P_{\eta}$.
			\item[iv)] For every $\eta \in \mathcal{E}$, $P_{\eta}$ is a polyhedron.
			\item[v)] Let $\beta$ be the positive constant defined by
			\begin{equation}\label{minimaldeviation}
				\beta:=\underset{\eta, \bar{\eta} \in \mathcal{E}}{\underset{\eta \neq \bar{\eta}}{\min}}|\eta-\bar{\eta}|^2.
			\end{equation}
			Then we have
			\begin{equation}\label{separazionelivellipotenziale}
				|\bar{\eta}-x|^2\ge |\eta-x|^2+\beta \quad \text{for every distinct $\eta, \bar{\eta} \in \mathcal{E}$ and $x\in Q_{\eta}\cap P_{\bar{\eta}}$.}
			\end{equation}
		\end{itemize}
	\end{prop}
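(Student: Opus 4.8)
The plan is to establish the five points essentially in order, using the description of the extended gradient in Proposition \ref{propsuboppdist}, namely $\nabla f(x) = \eta(x) - x$ where $\eta(x)$ is the projection of $x$ onto $\conv(\opt(x))$, together with point iii) of that proposition: $\eta(x)$ depends only on the optimality class $\opt(x)$. Point i) is then immediate: on a fixed Voronoi cell $V_H$ the optimality class is constantly $H$, so $\eta \equiv \eta(p_H)$ there; since there are finitely many nonempty Voronoi cells, the range $\mathcal{E}$ of $\eta$ is finite. For point ii), that $\{Q_\eta\}_{\eta\in\mathcal{E}}$ partitions $\mathbb{R}^d$ is just the statement that $\eta:\mathbb{R}^d\to\mathcal{E}$ is a well-defined surjection onto its range, and the characterization $x\in Q_\eta \iff \nabla f(x)=\eta - x$ is a direct rewriting of (\ref{extgroppdistfunc}).

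For point iii), I would argue as follows. First, $Q_\eta$ is a union of Voronoi cells because $\eta$ is constant on each cell (point i)): if $x\in Q_\eta$ and $\opt(y)=\opt(x)$ then $\eta(y)=\eta(x)=\eta$, so $y\in Q_\eta$. The same argument applies to $P_\eta$ once we observe, via Proposition \ref{propsuboppdist}(i), that $\partial g(x) = \conv(\opt(x))$ depends only on $\opt(x)$; hence the condition $\eta\in\partial g(x)$ depends only on the Voronoi cell of $x$. The inclusion $Q_\eta\subseteq P_\eta$ amounts to: if $\eta(x)=\eta$ then $\eta\in\partial g(x)=\conv(\opt(x))$, which holds because $\eta(x)$ is by definition a point of $\conv(\opt(x))$ (its projection of $x$). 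For point iv), I would write $P_\eta$ as the set where $\eta\in\partial g(x)$; by convex duality this is equivalent to $g^*(\eta) = \langle \eta, x\rangle - g(x)$, i.e. $x$ is a point where the affine function $x'\mapsto \langle \eta, x'\rangle - g^*(\eta)$ touches $g$ from below. Since $g = g_K = \max_{i}(\langle x, p_i\rangle - |p_i|^2/2)$ is a finite max of affine functions, $P_\eta = \{x : \langle\eta,x\rangle - g(x) = c\}$ for the constant $c=g^*(\eta)$; and $\langle\eta, x\rangle - g(x)\le c$ always holds by definition of $g^*$, so $P_\eta$ is the intersection of the closed half-spaces $\{\langle\eta,x\rangle - (\langle x,p_i\rangle - |p_i|^2/2)\ge$ target\}... — more cleanly, $P_\eta$ is cut out by finitely many affine inequalities coming from the pieces of $g$ together with the single affine equality pinning the value, hence a polyhedron; I would also note $P_\eta$ is nonempty since $\eta\in\mathcal{E}$ means $\eta=\eta(x)$ for some $x$, and that $x$ lies in $P_\eta$ by iii).

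The substantive point, and the one I expect to be the main obstacle, is point v), the quantitative separation (\ref{separazionelivellipotenziale}). Here is the idea. Fix distinct $\eta,\bar\eta\in\mathcal{E}$ and $x\in Q_\eta\cap P_{\bar\eta}$. Because $x\in Q_\eta$, the point $\eta$ is the projection of $x$ onto the convex set $C:=\conv(\opt(x))$; because $x\in P_{\bar\eta}$, we have $\bar\eta\in\partial g(x)=C$. The projection characterization gives $\langle x - \eta,\, \bar\eta - \eta\rangle \le 0$ for every point $\bar\eta\in C$, hence
\[
|\bar\eta - x|^2 = |\bar\eta - \eta|^2 + |\eta - x|^2 + 2\langle \eta - x,\, \bar\eta - \eta\rangle \ge |\bar\eta - \eta|^2 + |\eta - x|^2 \ge \beta + |\eta - x|^2,
\]
using $\langle \eta - x, \bar\eta - \eta\rangle \ge 0$ and the definition (\ref{minimaldeviation}) of $\beta$ together with $\eta\ne\bar\eta$. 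The delicate part is making sure the two hypotheses really do feed the projection inequality in the same convex set $C$: $x\in Q_\eta$ controls $\opt(x)$ only through forcing $\eta(x)=\eta$, while $\bar\eta\in\partial g(x)$ is genuinely about the optimality class of $x$, so both statements do refer to $C=\conv(\opt(x))$ and the argument closes. One should also verify $\beta>0$, which is clear since $\mathcal{E}$ is finite and its elements are distinct. Finally I would double-check the edge case where $Q_\eta\cap P_{\bar\eta}$ could be empty, in which case the inequality holds vacuously, so no harm done.
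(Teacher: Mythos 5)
Your proposal is correct, and for points i)--iii) and v) it follows the paper's route exactly: everything is read off from Proposition \ref{propsuboppdist} (constancy of $\eta$ and of $\partial g=\conv(\opt(\cdot))$ on Voronoi cells, $\eta(x)\in\partial g(x)$ giving $Q_\eta\subseteq P_\eta$), and the separation in v) is obtained, as in the paper, from the fact that $\eta$ is the projection of $x$ onto the closed convex set $\partial g(x)$ which contains $\bar\eta$, via the variational inequality $\langle x-\eta,\bar\eta-\eta\rangle\le 0$ and the definition of $\beta$. The only genuine divergence is point iv). The paper observes that $P_\eta$ is a closed convex set which is the union of the finitely many closures $\overline{V_H}$ of the Voronoi cells it contains, and invokes the convex-geometry fact that a convex finite union of polyhedra is itself a polyhedron. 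You instead use duality: $x\in P_\eta$ iff Fenchel--Young equality $\langle\eta,x\rangle-g(x)=g^*(\eta)$ holds, and since $g$ is a maximum of finitely many affine functions (and $g^*(\eta)<+\infty$ because $P_\eta\neq\emptyset$ for $\eta\in\mathcal{E}$, as you note), the always-valid inequality $\langle\eta,x\rangle-g(x)\le g^*(\eta)$ turns the equality into finitely many affine inequalities $\langle x,p_i\rangle-|p_i|^2/2\le\langle\eta,x\rangle-g^*(\eta)$, exhibiting $P_\eta$ directly as a finite intersection of closed half-spaces. Your argument is more explicit and self-contained (it even produces an explicit inequality description of $P_\eta$, and gives closedness and convexity for free), at the cost of bringing in the conjugate $g^*$; the paper's argument is shorter but leans on a nontrivial, if standard, structural fact about unions of polyhedra. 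Either way the statement is fully proved.
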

	\begin{proof}
		Point i), ii) and iii) are direct consequences of Proposition \ref{propsuboppdist}. To prove point iv) it is enough to notice that $P_{\eta}$ is a closed convex set that can be written as the union of a finite number of polyhedra (the closures of the Voronoi cells contained in $P_{\eta}$). Finally, point v) easily follows from the fact that $\eta$ is the projection of $x$ on the closed convex set $\partial g(x)$ containing $\bar{\eta}$.
	\end{proof}
	So we see that $K$ carries two partitions of $\mathbb{R}^{d}$, one finer than the other: the first into Voronoi cells and the second into potential zones. Simple examples show that for a general $K$ they do not coincide (see for instance Example \ref{ex2} hereafter). If they coincide, we say that $K$ is \textit{balanced}. In such a case, the map $\eta$ defines a bijection between Voronoi cells and potential zones, that is to say, 
	\begin{equation*}
		\eta(x)=\eta(y)\iff \opt(x)=\opt(y).
	\end{equation*} 
	Clearly, a sufficient condition for $K$ to be balanced is given by
	\begin{equation}\label{hypperregvor}
		\opt(\eta(x))=\opt(x)\quad \text{for every $x\in \mathbb{R}^{d}$.}
	\end{equation} 
	It is worth noting that in dimension $d=1$ every $K$ is balanced, and that the same is true in any dimension for cubic lattices.
	\paragraph{Conserved quantities.}
	In this paragraph we underline the presence of some conserved quantities for local minimizers of our variational problem. They naturally arise by testing the local minimality against variations along some specific directions. The following Lemma collects two crucial observations in order to suitably perform such variations:
	\begin{lem}[Local properties of the Voronoi diagram]
		\label{locpropvordiag}
		The following facts hold:
		\begin{itemize}
			\item[i)] If $x\in V_H$, for some $H\subseteq K$, then there exists a neighborhood $U$ of $x$ such that $\opt(y)\subseteq H$, for every $y\in U$.
			\item[ii)] If $x\in V_H$, then $x+\epsilon v \in V_H$, provided that the vector $v$ is parallel to $B_H$, and $\epsilon$ is sufficiently small.	
		\end{itemize}
	\end{lem}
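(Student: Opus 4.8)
The plan is to treat the two items independently; both follow from elementary continuity and distance estimates, the only structural input being the finiteness of $K$.

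For item i), I would exploit that $\opt(x)=H$ means $|x-p|=\dist_K(x)$ for every $p\in H$ while $|x-p|>\dist_K(x)$ for every $p\in K\setminus H$. Fixing such a $p$ and putting $r_p:=|x-p|-\dist_K(x)>0$, the functions $y\mapsto|y-p|$ and $y\mapsto\dist_K(y)$ being $1$-Lipschitz, there is a ball $U_p$ around $x$ on which $|y-p|>\dist_K(y)$, hence $p\notin\opt(y)$. Since $K\setminus H$ is finite, $U:=\bigcap_{p\in K\setminus H}U_p$ is still a neighborhood of $x$, and by construction $\opt(y)\subseteq H$ for every $y\in U$.

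For item ii), I would first observe that $x\in V_H$ forces $x$ to be equidistant from all points of $H$ (they are all nearest points of $K$), so $x\in B_H$; as $v$ is a direction vector of the affine subspace $B_H$, the point $x+\epsilon v$ lies in $B_H$ for every $\epsilon$, and thus $|x+\epsilon v-p|$ does not depend on the choice of $p\in H$. Applying item i) with $\epsilon$ small enough gives $\opt(x+\epsilon v)\subseteq H$; on the other hand $\opt(x+\epsilon v)$ is nonempty, so it contains some $p_0\in H$, and since every point of $H$ is at the same distance from $x+\epsilon v$ as $p_0$, every point of $H$ realizes $\dist_K(x+\epsilon v)$. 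Hence $H\subseteq\opt(x+\epsilon v)\subseteq H$, i.e.\ $\opt(x+\epsilon v)=H$ and $x+\epsilon v\in V_H$.

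I do not expect a genuine obstacle here; the one place to be careful is the logic of item ii), where one should not invoke relative openness of $V_H$ in $B_H$ but instead combine i) (which rules out new nearest points coming from $K\setminus H$) with the equidistance encoded in $B_H$ (which forces all of $H$ back into the optimality class).
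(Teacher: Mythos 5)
Your proposal is correct, and your treatment of item i) is exactly the paper's argument: a point of $K\setminus H$ that is strictly non-optimal for $x$ stays non-optimal for all $y$ near $x$ (your Lipschitz estimate makes this quantitative), and the finiteness of $K\setminus H$ lets you intersect the finitely many neighborhoods $U_p$.

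For item ii) you take a mildly different route. The paper disposes of it in one line by invoking the structural fact, recorded in the preceding proposition on properties of the Voronoi partition, that $V_H$ is relatively open in the affine space $B_H$. You instead rederive that openness at the point $x$: item i) rules out any point of $K\setminus H$ entering $\opt(x+\epsilon v)$ for $|\epsilon|$ small, while the equidistance from all points of $H$ built into the definition of $B_H$ (together with the nonemptiness of the optimality class) forces $H\subseteq\opt(x+\epsilon v)$, hence $\opt(x+\epsilon v)=H$. What your version buys is self-containedness — you prove, rather than quote, the relative openness of $V_H$ in $B_H$, and the argument is elementary and complete; what the paper's version buys is brevity, since the Voronoi-partition facts were already on record and are reused elsewhere. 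Either way the lemma stands, and your closing caution about not circularly invoking relative openness is well placed, since your argument in fact establishes it.
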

	\begin{proof}
		Point i) follows from the fact that a point of $K$ which is not optimal for $x$ is neither optimal for $y$, provided that $y$ is chosen close enough to $x$. Point ii) is instead a direct consequence of the fact that $V_{H}$ is relatively open in the affine space $B_H$.
	\end{proof} 
	\noindent By using very classical variational arguments as well as Lemma \ref{locpropvordiag} we derive the following 
	\begin{prop}[Conservation laws]
		\label{conserv}
		Let $\gamma$ be a local minimizer of $I^{\delta}_{f_{K},x_{0},x_{\delta}}$. Then 
		\begin{itemize}
			\item[i)](Conservation of the energy). There exists a constant $c\in \mathbb{R}$ such that
			\[|\dot{\gamma}|^{2}-h(|\nabla g(\gamma)|^{2})=c\quad \text{a.e.\ in $(0,\delta)$.}\] 
			In particular, $\gamma$ is Lipschitz continuous.
			\item[ii)](Local conservation of momentum). Let $(t_1, t_2\subseteq [0,\delta]$ be a time interval. Suppose that there exists an optimality class $H\subseteq K$, with $V_H \neq \emptyset$, such that for every $s\in (t_1, t_2)$ the inclusion $\opt(\gamma(s))\subseteq H$ holds. Let $\gamma_H$ be the curve obtained by projecting $\gamma$ on the affine space $B_H$. Then $\gamma_H$ is a $C^{1,1}$ curve in $(t_1, t_2)$ and, in this interval, it satisfies 
			\[\ddot{\gamma}_H=h'(|\gamma-\eta(\gamma)|^2)(\gamma_H - p_H).\]
			In particular, for each time $t\in(0,\delta)$, denoting $H=\opt(\gamma(t))$, there exists a neighborhood of $t$ in which the component $\dot{\gamma}_H$ of the momentum parallel to $B_H$ is continuous.
		\end{itemize}
	\end{prop}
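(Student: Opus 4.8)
Both conclusions will be obtained by testing local minimality against one‑parameter families of competitors: \emph{inner} (time–reparametrization) variations for the energy identity i), and \emph{outer} variations supported in directions parallel to $B_H$ for ii). Write $I:=I^{\delta}_{f_K,x_0,x_\delta}$; being a finite‑energy local minimizer, $\gamma\in AC([0,\delta],\mathbb{R}^d)$. \emph{Part i)} is exactly the Dubois–Reymond identity (\ref{DBR}) and uses that $I$ is autonomous. For $\varphi\in C_c^\infty((0,\delta))$ the map $\psi_s(t)=t+s\varphi(t)$ is, for $|s|$ small, a diffeomorphism of $[0,\delta]$ fixing both endpoints, so $\gamma_s:=\gamma\circ\psi_s^{-1}$ is an admissible competitor converging uniformly to $\gamma$, and $s\mapsto I(\gamma_s)$ has a minimum at $s=0$. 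After the change of variables $t=\psi_s(\tau)$,
\[
I(\gamma_s)=\int_0^\delta\Big(\frac{|\dot\gamma(\tau)|^2}{1+s\varphi'(\tau)}+h\big(|\nabla f_K|^2(\gamma(\tau))\big)\,(1+s\varphi'(\tau))\Big)\,d\tau ,
\]
and differentiating at $s=0$ gives $\int_0^\delta\big(|\dot\gamma|^2-h(|\nabla f_K|^2(\gamma))\big)\varphi'=0$ for every $\varphi$, i.e. $|\dot\gamma|^2-h(|\nabla f_K|^2(\gamma))\equiv c$ a.e. for some constant $c$. Since $\gamma([0,\delta])$ is compact, $|\nabla f_K|^2(\gamma)\le\dist_K^2(\gamma)$ and hence $h(|\nabla f_K|^2(\gamma))$ are bounded, so $|\dot\gamma|$ is essentially bounded and $\gamma$ is Lipschitz.

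\emph{Part ii).} Fix $(t_1,t_2)$ and $H$ as in the statement, a unit vector $v$ parallel to $B_H$, and $\varphi\in C_c^\infty((t_1,t_2))$, and test local minimality against $\gamma_\epsilon:=\gamma+\epsilon\varphi v$. The kinetic part of $I(\gamma_\epsilon)$ is a quadratic polynomial in $\epsilon$, so everything hinges on the potential term $\int_0^\delta h(|\nabla f_K|^2(\gamma_\epsilon))$, and \emph{this is the main obstacle}: $|\nabla f_K|$ is only lower semicontinuous, so a priori $\epsilon\mapsto|\nabla f_K|^2(\gamma_\epsilon(t))$ need not be continuous. The way around it is the following splitting on the \emph{open} set $R_H:=\{x:\opt(x)\subseteq H\}$, which contains $\gamma((t_1,t_2))$ by hypothesis and whose complement $\{x:\dist_{K\setminus H}(x)\le\dist_H(x)\}$ is clearly closed: for $x\in R_H$,
\[
|\nabla f_K(x)|^2=\big|\mathrm{proj}_{B_H}(x)-p_H\big|^2+|\nabla f_H|^2\big(\mathrm{proj}_{A_H}(x)\big),
\]
$f_H$ being the opposite squared distance from the finite set $H$. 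This holds because, for $x\in R_H$, the orthogonal decomposition $|x-q|^2=\dist(x,A_H)^2+|\mathrm{proj}_{A_H}(x)-q|^2$ (valid for every $q\in A_H\supseteq H$) forces $\opt_K(x)=\opt_H(\mathrm{proj}_{A_H}(x))$, where $\opt_H$ is the optimality class for the configuration $H$; combining this with $\conv(\opt_K(x))\subseteq A_H$, with Proposition \ref{propsuboppdist}(iii), and with $\dist(x,A_H)=|\mathrm{proj}_{B_H}(x)-p_H|$ gives the identity.

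\emph{Part ii), conclusion.} Since $\mathrm{proj}_{B_H}\gamma_\epsilon(t)=\gamma_H(t)+\epsilon\varphi(t)v$ while $\mathrm{proj}_{A_H}\gamma_\epsilon(t)=\mathrm{proj}_{A_H}\gamma(t)$ is independent of $\epsilon$ (because $v\perp A_H$), the splitting turns the potential integrand into a $C^1$ function of $\epsilon$ for $|\epsilon|$ small, with a $t$‑uniform dominating bound (by compactness of $\gamma_\epsilon(\mathrm{supp}\,\varphi)\subseteq R_H$ and continuity of $h'$); hence $\epsilon\mapsto I(\gamma_\epsilon)$ is differentiable near $0$, where it has a local minimum, so its derivative vanishes there:
\[
\int_0^\delta(\dot\gamma\cdot v)\,\varphi'\,dt=-\int_0^\delta h'\big(|\nabla f_K|^2(\gamma)\big)\big((\gamma_H-p_H)\cdot v\big)\,\varphi\,dt
\]
for all such $v$ and $\varphi$. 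As $\gamma-\gamma_H$ is valued in $\vec{A_H}$ and $v\perp\vec{A_H}$, the left‑hand side equals $-\langle(\gamma_H\cdot v)'',\varphi\rangle$; letting $v$ run over a basis of $\vec{B_H}$ gives $\ddot\gamma_H=h'(|\nabla f_K|^2(\gamma))(\gamma_H-p_H)$ in $\mathcal D'((t_1,t_2))$. The right‑hand side is a bounded measurable function of $t$ (again since $|\nabla f_K|^2(\gamma)\le\dist_K^2(\gamma)$ is bounded, $h'$ is continuous, and $\gamma_H$ is Lipschitz by i)), so $\dot\gamma_H$ has a Lipschitz representative, i.e. $\gamma_H\in C^{1,1}((t_1,t_2))$, and the displayed ODE holds a.e. there. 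The last assertion then follows at once: given $t\in(0,\delta)$ with $H=\opt(\gamma(t))$, Lemma \ref{locpropvordiag}(i) and continuity of $\gamma$ provide an interval $(t_1,t_2)\ni t$ on which $\opt(\gamma(\cdot))\subseteq H$, and the $C^{1,1}$‑regularity of $\gamma_H$ just established yields continuity of $\dot\gamma_H$ near $t$.
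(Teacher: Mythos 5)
Your proof is correct, and its variational skeleton coincides with the paper's: horizontal reparametrizations $\gamma\circ\psi_s^{-1}$ for i) (the Dubois--Reymond identity), and vertical variations $\gamma+\epsilon\varphi v$ with $v$ parallel to $B_H$ for ii). Where you genuinely depart is in how the potential term is controlled under the vertical variation: the paper simply invokes Lemma \ref{locpropvordiag} ii) to assert $\eta(\gamma_\epsilon)=\eta(\gamma)$ for $\epsilon$ small, a pointwise-in-$s$ statement that leaves implicit the uniformity needed to differentiate under the integral, whereas you prove the orthogonal splitting $|\nabla f_K(x)|^2=|\mathrm{proj}_{B_H}(x)-p_H|^2+|\nabla f_H|^2(\mathrm{proj}_{A_H}(x))$ on the open set $R_H=\{x:\opt(x)\subseteq H\}$, so that along $\gamma_\epsilon$ the integrand becomes an explicitly $C^1$ function of $\epsilon$ with a $t$-uniform dominating bound, because $\gamma(\mathrm{supp}\,\varphi)$ is a compact subset of the open set $R_H$. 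Your derivation of the splitting is right (the identity $\opt_K(x)=\opt_H(\mathrm{proj}_{A_H}(x))$ plus $\conv(\opt_K(x))\subseteq A_H$ and Pythagoras), though the citation of Proposition \ref{propsuboppdist} iii) is slightly off target --- what you actually use is the Pythagorean computation appearing in its proof, not the statement itself. The payoff of your route is a cleaner justification of exchanging $d/d\epsilon$ with the integral, valid for a merely $C^1$ (not monotone) shape $h$ and without any per-point smallness of $\epsilon$; the cost is the extra geometric preliminary. Both mechanisms ultimately rest on the same fact, namely that perturbing parallel to $B_H$ does not alter the relevant optimality structure. (Minor point: the $\nabla g$ in the statement of i) is a typo for $\nabla f$, and you correctly work with $|\nabla f_K|^2$.)
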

	\begin{proof}
		Point i) states that $\gamma$ solves the Dubois-Reymond equation (\ref{DBR}). This can be shown by testing the local minimality through \lq\lq horizontal'' variations of the independent variable of the form $\gamma_{\epsilon}= \gamma \circ \rho_{\epsilon}^{-1}$, where $\rho_{\epsilon}=\id + \epsilon \varphi$, $\varphi \in C^{\infty}_{c}((0,\delta))$, and $\epsilon$ is small enough so that $\rho_{\epsilon}$ is a diffeomorphism. To get point ii), instead, we need to perform \lq\lq vertical'' variations of the form $\gamma_{\epsilon} = \gamma + \epsilon \varphi v$, where $\varphi \in C^{\infty}_{c}((t_1,t_2))$ and $v$ is any vector parallel to the affine space $B_{H}$. We then use point ii) of Lemma \ref{locpropvordiag} to get $\eta(\gamma_{\epsilon})=\eta(\gamma)$, for $\epsilon$ sufficiently small.
	\end{proof}
	\noindent Point ii) of the previous proposition implies in particular that $\gamma$ is regular as long as it stays in a single Voronoi cell. More precisely:
	\begin{cor}[Regularity inside a Voronoi cell]
		\label{reginvoronoicell}
		Let $\gamma$ be a local minimizer of $I^{\delta}_{f_{K},x_{0},x_{\delta}}$. Suppose that $\opt(\gamma(s))=H$ is constant in $(t_1, t_2)$ and let $\eta_H = \eta(V_{H})$. Then, in this interval, $\gamma=\gamma_H$,  $\eta(\gamma)=\eta_H$ is constant, and $\gamma$ is a $C^{2}$ solution of
		\[\ddot{\gamma}=h'(|\gamma-\eta_H|^2)(\gamma - p_H).\]
		Moreover, if h is $C^{\infty}$, then $\gamma \in C^{\infty}((t_1,t_2))$.
	\end{cor}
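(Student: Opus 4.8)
The plan is to read the statement off Proposition \ref{conserv} ii) once we notice that the stronger hypothesis $\opt(\gamma(s))=H$ (equality, not merely inclusion) pins the curve to the affine space $B_H$, and then to promote the $C^{1,1}$ regularity provided by that proposition to $C^2$, and to $C^\infty$ when $h$ is smooth, by a standard bootstrap on the resulting ODE. First I would record the two consequences of the hypothesis. Since $\opt(\gamma(s))=H$ is equivalent to $\gamma(s)\in V_H$, we have $V_H\neq\emptyset$ and, as each nonempty Voronoi cell is relatively open in its affine hull $B_H$, in particular $\gamma(s)\in B_H$; hence the orthogonal projection onto $B_H$ fixes $\gamma(s)$, so $\gamma=\gamma_H$ throughout $(t_1,t_2)$. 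Moreover, by Proposition \ref{vorcellsandpotzones} i) (equivalently Proposition \ref{propsuboppdist} iii)) the map $\eta$ is constant on $V_H$, so $\eta(\gamma(s))=\eta(V_H)=\eta_H$ for every $s\in(t_1,t_2)$.

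Next I would apply Proposition \ref{conserv} ii) to this interval with the optimality class $H$: the required inclusion $\opt(\gamma(s))\subseteq H$ holds (with equality) and $V_H\neq\emptyset$, so the proposition yields that $\gamma_H$ is $C^{1,1}$ on $(t_1,t_2)$ and satisfies $\ddot\gamma_H=h'(|\gamma-\eta(\gamma)|^2)(\gamma_H-p_H)$ there. Substituting the two identities just obtained, $\gamma_H=\gamma$ and $\eta(\gamma)=\eta_H$, this becomes
\[\ddot\gamma=h'(|\gamma-\eta_H|^2)(\gamma-p_H)\]
a.e.\ in $(t_1,t_2)$, with $\gamma\in C^{1,1}$ there; in particular $\gamma$ is $C^1$ with $\dot\gamma$ Lipschitz.

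It remains to bootstrap. Writing $F(x):=h'(|x-\eta_H|^2)(x-p_H)$, the map $F$ is continuous since $h\in C^1$, hence $t\mapsto F(\gamma(t))$ is continuous; then $\dot\gamma(t)=\dot\gamma(s)+\int_s^t F(\gamma(\tau))\,d\tau$ shows $\dot\gamma\in C^1$, i.e.\ $\gamma\in C^2$ and the displayed ODE holds classically on $(t_1,t_2)$. If moreover $h\in C^\infty$ then $F$ is $C^\infty$, and by induction: if $\gamma\in C^k$ then $t\mapsto F(\gamma(t))$ is $C^k$, so $\ddot\gamma\in C^k$ and $\gamma\in C^{k+2}$, whence $\gamma\in C^\infty((t_1,t_2))$. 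I do not expect any genuine obstacle here: the substance is entirely in Proposition \ref{conserv} ii), and the only points needing a little care are the identification $\gamma=\gamma_H$ (which uses only that a Voronoi cell sits inside its affine hull $B_H$) and the accounting of the regularity of $h$ in the bootstrap, which is exactly what makes $h\in C^1$ stop the gain at $C^2$ while $h\in C^\infty$ propagates indefinitely.
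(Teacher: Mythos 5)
Your argument is correct and is exactly the route the paper intends: the paper states this corollary without proof as an immediate consequence of Proposition \ref{conserv} ii), and your fill-in (identifying $\gamma=\gamma_H$ since $V_H\subseteq B_H$, noting $\eta$ is constant on $V_H$, then bootstrapping the resulting ODE to $C^2$, and to $C^\infty$ when $h$ is smooth) is precisely the implicit argument. No gaps.
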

	As a natural consequence, any singularity for a local minimizer of the functional $I^{\delta}_{f_{K},x_{0},x_{\delta}}$ appears only when the optimality class \lq\lq changes''. In the next paragraph we will try to give a more precise meaning to this statement. 
	\paragraph{Shock times and minimal deviation.}
	Given a curve $\gamma:[0,\delta]\rightarrow \mathbb{R}^{d}$, and a time $t\in [0,\delta]$, we say that
	\begin{itemize}
		\item $t$ is a \textit{shock} for $\gamma$ if $\opt(\gamma)$ is not constant on $I$ for every neighborhood $I$ of $t$ in $[0,\delta]$.
		\item $t$ is a \textit{nondegenerate shock} for $\gamma$ if $\eta(\gamma)$ is not constant on $I$ for every neighborhood $I$ of $t$ in $[0,\delta]$.
		\item $t$ is an \textit{effective shock} for $\gamma$ if there are two distinct potential zones $Q_{\eta}, Q_{\bar{\eta}}$, and two distinct Voronoi cells $V_{H}\subseteq Q_{\eta}$, $V_{\bar{H}}\subseteq Q_{\bar{\eta}}$ such that, for some $\epsilon >0$, one of the following holds:
		\begin{itemize}
			\item[-] $\gamma((t-\epsilon, t))\subset V_{H}$ and $\gamma([t, t+\epsilon))\subset V_{\bar{H}}$. In this case, $H\subsetneq \bar{H}$ and we say that $t$ is a \textit{left effective shock}.
			\item[-] $\gamma((t-\epsilon, t])\subset V_{H}$ and $\gamma((t, t+\epsilon))\subset V_{\bar{H}}$. In this case, $\bar{H}\subsetneq H$ and we say that $t$ is a \textit{right effective shock}.
		\end{itemize}
	\end{itemize}
	We denote by $S(\gamma)$, $NDS(\gamma)$ and $ES(\gamma)$ respectively the sets of shocks, nondegenerate shocks and effective shocks for $\gamma$. Notice that $ES(\gamma)\subseteq NDS(\gamma)\subseteq S(\gamma)$, and that $S(\gamma)$ and $NDS(\gamma)$ are compact. According to the definitions above, during a shock there must be a change of Voronoi cell, while, during a nondegenerate shock there is also a change of potential zone. Clearly we have $S(\gamma)=NDS(\gamma)$ provided that $K$ is balanced. Finally, we have an effective shock when a neat passage occurs from a Voronoi cell to an adjacent one with different potential. By conservation of the energy, we expect the dynamics to develop a singularity in the kinetic term here. This is the content of the following proposition, which is a direct consequence of the conservation laws stated in Proposition \ref{conserv}.
	\begin{prop}[Minimal deviation during an effective shock]
		Suppose that the potential shape $h$ is strictly increasing.
		Let $\gamma$ be a local minimizer of $I^{\delta}_{f_{K},x_{0},x_{\delta}}$. Suppose that $t\in (0,\delta)$ is a left effective shock for $\gamma$, in which $\gamma$ jumps from $V_H$ to $V_{\bar{H}}$. Let $\eta_H = \eta(V_{H})$ and $\eta_{\bar{H}}=\eta(V_{\bar{H}})$. Defined $\dot{\gamma}_{-}(t)$ and $\dot{\gamma}_{+}(t)$ respectively by
		\[\dot{\gamma}_{-}(t):=\underset{s\rightarrow t^{-}}{\lim}\dot{\gamma}(s)\quad\quad \dot{\gamma}_{+}(t):=\underset{s\rightarrow t^{+}}{\lim}\dot{\gamma}(s),\]
		then $\dot{\gamma}_{+}(t)$ is the component of $\dot{\gamma}_{-}(t)$ parallel to $B_{\bar{H}}$. Moreover
		\[|\dot{\gamma}_{-}(t)-\dot{\gamma}_{+}(t)|^2=|\dot{\gamma}_{-}(t)|^2-|\dot{\gamma}_{+}(t)|^2=h(|\gamma(t)-\eta_H|^2)-h(|\gamma(t)-\eta_{\bar{H}}|^2)>0\] 
	\end{prop}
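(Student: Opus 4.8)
The plan is to read off the two one-sided velocities from the one-sided regularity of $\gamma$ at $t$, to fix the direction of the velocity jump by means of the local conservation of momentum (Proposition~\ref{conserv}(ii)), and to compute the size of the jump through the conservation of energy (Proposition~\ref{conserv}(i)) combined with the separation estimate (\ref{separazionelivellipotenziale}). First I would check that the limits $\dot\gamma_{\pm}(t)$ exist. On $(t-\epsilon,t)$ the curve stays in $V_H$, so its optimality class is constantly $H$; by Corollary~\ref{reginvoronoicell} we have $\eta(\gamma)\equiv\eta_H$ there, hence $|\nabla f|^2(\gamma)=|\gamma-\eta_H|^2$, and $\gamma$ is a $C^2$ solution of $\ddot\gamma=h'(|\gamma-\eta_H|^2)(\gamma-p_H)$. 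Since $\gamma$ is Lipschitz on $[0,\delta]$ by Proposition~\ref{conserv}(i) and the right-hand side of this ODE is continuous up to $t$, $\ddot\gamma$ is bounded near $t^-$, so $\dot\gamma$ is uniformly continuous there and $\dot\gamma_-(t)$ is well defined; arguing symmetrically on $(t,t+\epsilon)\subset V_{\bar H}$ gives $\dot\gamma_+(t)$ together with the identity $|\nabla f|^2(\gamma)=|\gamma-\eta_{\bar H}|^2$ on that side.

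Next I would determine the direction of the jump. As $t$ is a \emph{left} effective shock we have $H\subsetneq\bar H$, and since $\gamma(t)\in V_{\bar H}$ also $\opt(\gamma(t))=\bar H$; hence $\opt(\gamma(s))\subseteq\bar H$ for every $s\in(t-\epsilon,t+\epsilon)$. Applying Proposition~\ref{conserv}(ii) with the class $\bar H$ on this interval, the projection $\gamma_{\bar H}$ of $\gamma$ onto $B_{\bar H}$ is $C^{1,1}$, so $\dot\gamma_{\bar H}$ is continuous at $t$. Writing $\Pi$ for the orthogonal projection onto the linear subspace parallel to $B_{\bar H}$, one has $\dot\gamma_{\bar H}=\Pi\dot\gamma$ on $(t-\epsilon,t)$ (the orthogonal projection onto an affine subspace being affine), while on $[t,t+\epsilon)$ the inclusion $V_{\bar H}\subseteq B_{\bar H}$ forces $\gamma_{\bar H}=\gamma$, hence $\dot\gamma_{\bar H}=\dot\gamma$ there. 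Passing to the limits $s\to t^{\mp}$ and using the continuity of $\dot\gamma_{\bar H}$ at $t$ gives $\dot\gamma_+(t)=\Pi\dot\gamma_-(t)$, that is, $\dot\gamma_+(t)$ is the component of $\dot\gamma_-(t)$ parallel to $B_{\bar H}$. In particular $\dot\gamma_-(t)-\dot\gamma_+(t)$ is orthogonal to $\dot\gamma_+(t)$, so $|\dot\gamma_-(t)-\dot\gamma_+(t)|^2=|\dot\gamma_-(t)|^2-|\dot\gamma_+(t)|^2$.

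Finally I would quantify the jump and prove its positivity. By Proposition~\ref{conserv}(i) there is $c\in\mathbb{R}$ with $|\dot\gamma|^2=h(|\nabla f|^2(\gamma))+c$ a.e.; letting $s\to t^-$ and $s\to t^+$, using continuity of $\gamma$ at $t$ and the two identifications of $|\nabla f|^2(\gamma)$ above, we get $|\dot\gamma_-(t)|^2=h(|\gamma(t)-\eta_H|^2)+c$ and $|\dot\gamma_+(t)|^2=h(|\gamma(t)-\eta_{\bar H}|^2)+c$, and subtracting yields $|\dot\gamma_-(t)|^2-|\dot\gamma_+(t)|^2=h(|\gamma(t)-\eta_H|^2)-h(|\gamma(t)-\eta_{\bar H}|^2)$. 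For the strict inequality, observe that $\gamma(t)\in V_{\bar H}\subseteq Q_{\eta_{\bar H}}$ while $\eta_H\in\conv(H)\subseteq\conv(\bar H)=\partial g(\gamma(t))$, so $\gamma(t)\in Q_{\eta_{\bar H}}\cap P_{\eta_H}$, and $\eta_H\neq\eta_{\bar H}$ because by definition of effective shock $V_H$ and $V_{\bar H}$ lie in distinct potential zones; Proposition~\ref{vorcellsandpotzones}(v) then gives $|\gamma(t)-\eta_H|^2\ge|\gamma(t)-\eta_{\bar H}|^2+\beta>|\gamma(t)-\eta_{\bar H}|^2$, and since $h$ is strictly increasing it follows that $h(|\gamma(t)-\eta_H|^2)-h(|\gamma(t)-\eta_{\bar H}|^2)>0$.

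The main obstacle is the second step: one must carefully justify the existence of the one-sided velocities and reconcile the $C^{1,1}$-regularity of the \emph{tangential} component $\gamma_{\bar H}$ (which is all Proposition~\ref{conserv}(ii) provides) with the one-sided $C^2$-regularity coming from Corollary~\ref{reginvoronoicell}; once the orthogonal splitting of $\mathbb{R}^d$ along $B_{\bar H}$ is set up, the remaining computations are routine bookkeeping with the Voronoi and potential-zone structure.
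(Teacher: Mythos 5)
Your argument is correct and follows exactly the route the paper intends: the paper gives no detailed proof, stating the result is a direct consequence of the conservation laws of Proposition \ref{conserv}, and your proof is precisely that — one-sided $C^2$ regularity from Corollary \ref{reginvoronoicell}, the direction of the jump from the continuity of the tangential momentum in Proposition \ref{conserv}(ii), the size of the jump from the energy identity in Proposition \ref{conserv}(i), and strict positivity from the separation estimate (\ref{separazionelivellipotenziale}) together with the strict monotonicity of $h$.
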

	\noindent Clearly an analogous result holds for right effective shocks. 
	\begin{oss}
		Notice that, in the specific case of a superadditive shape $h$, (for instance when $h=\id$, as in the MAG model), we derive a uniform lower bound on the jump of the derivative during an effective shock:
		\[|\dot{\gamma}_{-}(t)-\dot{\gamma}_{+}(t)|^2 = h(|\gamma(t)-\eta_H|^2)-h(|\gamma(t)-\eta_{\bar{H}}|^2)\ge h(|\eta_H-\eta_{\bar{H}}|^2)\ge h(\beta),\]
		where $\beta$ was defined in (\ref{minimaldeviation}).\fr
	\end{oss}  
	\begin{oss}
		In the MAG dynamics, shocks tipically happen when two or more particles collide or separate, generating an instant change in the optimality class. For instance an effective shock occurs when two particles collide and remain sticked together. Notice that Proposition \ref{conserv} tells us that energy and momentum are conserved in a collision. \fr
	\end{oss}
	To end this part, we show, through a couple of simple examples, that all of the three types of shock times defined in this paragraph may occur for a minimizer of $I^{\delta}_{f_{K},x_{0},x_{\delta}}$. 
	
	\begin{ex}[Effective and noneffective shocks]\label{ex1}
		Take $h=\id$, $\delta=1$, $d=1$, $K=\left\{-1, 1\right\}$, $x_{0}=-c$, $x_{1}=c$, with $c\ge 0$. We see that $\mathbb{R}$ is partitioned into three Voronoi cells, namely: $V_{\left\{-1\right\}}=(-\infty,0)$, $V_{\left\{1\right\}}=(0,+\infty)$ and $V_{\left\{-1, 1\right\}}=\left\{0\right\}$. The potential term takes the form 
		\[|\nabla f(x)|^2 =
		\begin{cases}
			|x+1|^2\quad &\text{if $x\in (-\infty,0)$},\\
			0  &\text{if $x=0$},\\
			|x-1|^2\quad &\text{if $x\in (0,+\infty)$}.
		\end{cases}
		\]
		It is easily seen that a minimizer $\gamma$ of $I^{\delta}_{f_{K},x_{0},x_{\delta}}$ has to be nondecreasing, and thus $\gamma^{-1}(0)=[t_0, t_1]$ is a closed interval (possibly degenerate if $t_0 = t_1$), and $S(\gamma)=NDS(\gamma)=\left\{t_0, t_1\right\}$. Then, there are two possible qualitatively different behaviours of $\gamma$, according to whether $t_0 = t_1$ or not. If $t_0 = t_1$, then $\gamma$ has a single nondegenerate non effective shock time. If instead $t_0 \neq t_1$, then $t_0$ and $t_1$ are respectively left and right effective shocks. Now, direct computations show that the first case occurs if for instance $c=1$. On the other hand, we can prove that the second case necessarily occurs if $c$ is chosen sufficiently small. As a matter of fact, for $c<1$, by the monotonicity of $\gamma$, the minimum value $\Gamma(-c,c)$ of the functional can be bounded below as follows:
		\[\Gamma(-c,c)\ge (1-t_1 + t_0)(1-c)^2.\]
		At the same time, by Corollary \ref{stab2}, we have
		\[\Gamma(-c,c)\rightarrow \Gamma (0,0) = 0,\quad \text{as $c\rightarrow 0^{+}$}.\]
		Therefore, $t_1>t_0$, provided that we choose $c$ small enough.
	\end{ex}   
	
	\begin{ex}[Degenerate shocks]\label{ex2}
		Take $h=\id$, $\delta = 1$, $d=2$, $K=\left\{(1,0), (0,1), (-1,0)\right\}$, $x_0 = (0,-1)$, $x_{1}=(0,0)$. Here we notice that the two distinct Voronoi cells $V_{\left\{(1,0), (0,1), (-1,0)\right\}}=\left\{(0,0)\right\}$ and $V_{\left\{(1,0), (-1,0)\right\}}=\left\{0\right\}\times (-\infty,0)$ share the same potential zone $Q_{\eta}=\left\{0\right\}\times (-\infty,0]$ with $\eta = (0,0)$. Then, by comparison with the projection over $Q_{\eta}$, we see that every minimizer $\gamma$ of $I^{\delta}_{f_{K},x_{0},x_{\delta}}$ must live in $Q_\eta$, thus the first time $t$ at which $\gamma(t)=(0,0)$ has to be a degenerate shock for $\gamma$.  
	\end{ex}
	
	\subsection{Regularity results}
	Here we state and prove our main regularity results. Recall that $K=\left\{p_1,\dots,p_N\right\}$ is a finite collection of points in $\mathbb{R}^{d}$, and $h:[0,+\infty)\rightarrow [0,+\infty)$ is a $C^{1}$ potential shape.  
	\begin{thm}[$C^{1,1}$ regularity out of a finite number of nondegenerate shock times]
		\label{teoprinc}   
		Let $\gamma$ be a local minimizer of $I^{\delta}_{f_{K},x_{0},x_{\delta}}$. Suppose that $h$ is strictly increasing. Then:
		\begin{itemize}
			\item[i)] The set $NDS(\gamma)$ of non degenerate shock times of $\gamma$ is finite. That is to say, there is a finite number of times $0\le t_1 <\dots<t_{\ell}\le \delta$ such that $\eta(\gamma)$ is constant in each connected component of $[0,\delta]\setminus \left\{t_1,\dots,t_{\ell}\right\}$. 
			\item[ii)] Setting $t_0 = 0$ and $t_{\ell+1}=\delta$, then $\gamma$ is $C^{1,1}$ regular in the interval $[t_i, t_{i+1}]$ for every $i\in [0:\ell]$. Moreover, if we let $\eta_i \in \mathcal{E}$ be such that $\gamma((t_i, t_{i+1}))\subseteq Q_{\eta_i}$, then we can estimate
			\[|\ddot{\gamma}(r)|\le h'(|\gamma(r)-\eta_i|^2)|\gamma(r)-\eta_i|\quad \text{for a.e.\ $r\in (t_i,t_{i+1})$}.\]
		\end{itemize} 	 
	\end{thm}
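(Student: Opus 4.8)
The plan is to derive assertion (i) first — the finiteness of $NDS(\gamma)$ is the structural heart of the statement — and then obtain (ii) from it almost mechanically, via the observation that inside a single potential zone the problem reduces to a smooth constrained (obstacle-type) variational problem. The underlying reduction, used for both parts, is the following: on any interval $J$ on which $\eta(\gamma)$ equals a fixed $\eta_0\in\mathcal E$, Proposition~\ref{vorcellsandpotzones}(ii) gives $\nabla f(\gamma)=\eta_0-\gamma$ on $J$, so the potential term along $\gamma$ coincides there with the smooth expression $h(|\gamma-\eta_0|^2)$; moreover, on the convex polyhedron $P_{\eta_0}$ of Proposition~\ref{vorcellsandpotzones}(iv), which contains the zone $Q_{\eta_0}$, one has the pointwise inequality $h(|\nabla f(x)|^2)\le h(|x-\eta_0|^2)$, with equality exactly on $Q_{\eta_0}$, because $h$ is increasing and $\eta_0\in\partial g(x)$ whenever $x\in P_{\eta_0}$. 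Comparing a local minimizer $\gamma$ whose restriction to $J=[a,b]$ takes values in $Q_{\eta_0}$ against competitors that are valued in $P_{\eta_0}$ and agree with $\gamma$ outside $(a,b)$ then shows that $\gamma|_{[a,b]}$ is itself a local minimizer, among $P_{\eta_0}$-valued curves with its own endpoints, of the smooth functional $J_{\eta_0}(\sigma):=\int_a^b|\dot\sigma|^2+h(|\sigma-\eta_0|^2)$.

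\emph{Proof of (ii) assuming (i).} Write $NDS(\gamma)=\{t_1<\dots<t_\ell\}$ and set $t_0=0$, $t_{\ell+1}=\delta$. On each open interval $(t_i,t_{i+1})$ the map $\eta(\gamma)$ is constant, equal to some $\eta_i\in\mathcal E$, so by the reduction above $\gamma|_{[t_i,t_{i+1}]}$ locally minimizes $J_{\eta_i}$ among $P_{\eta_i}$-valued curves with its own endpoints. Since the Lagrangian of $J_{\eta_i}$ is smooth and strictly convex in the velocity and $P_{\eta_i}$ is convex, $\gamma$ can have no velocity jump at an interior point: replacing a small arc around such a point by the chord joining its endpoints — which, by convexity, stays in $P_{\eta_i}$ — would strictly lower the kinetic term while perturbing the potential to higher order, contradicting local minimality; hence $\gamma\in C^1((t_i,t_{i+1}))$. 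The Euler--Lagrange relation for the constrained problem reads, for a.e.\ $r$, that $\ddot\gamma(r)$ is the metric projection of $h'(|\gamma(r)-\eta_i|^2)(\gamma(r)-\eta_i)$ onto the tangent cone to $P_{\eta_i}$ at $\gamma(r)$; as this cone is convex and contains the origin, projection does not increase the norm, so
\[|\ddot\gamma(r)|\le h'(|\gamma(r)-\eta_i|^2)\,|\gamma(r)-\eta_i| \qquad\text{for a.e.\ }r\in(t_i,t_{i+1}),\]
which is the asserted estimate. The right-hand side is uniformly bounded — the range of $\gamma$ is compact, $\mathcal E$ is finite, and $h'$ is continuous on the relevant compact interval — so $\ddot\gamma\in L^\infty(t_i,t_{i+1})$; together with $\gamma\in C^1$ this yields that $\dot\gamma$ is Lipschitz, extends continuously to the closed interval, and $\gamma\in C^{1,1}([t_i,t_{i+1}])$. (When $\gamma$ traverses a Voronoi cell $V_H$ the projection reduces to the one onto $\vec B_H$, recovering the equation $\ddot\gamma=h'(|\gamma-\eta_i|^2)(\gamma-p_H)$ of Corollary~\ref{reginvoronoicell}, consistently since $\gamma-p_H\perp p_H-\eta_i$.)

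\emph{Proof of (i).} Suppose, for contradiction, that $NDS(\gamma)$ is infinite; being compact it has an accumulation point $\tau$, so there are infinitely many nondegenerate shock times $s_n\to\tau$. Between consecutive such times $\eta(\gamma)$ is a constant element of $\mathcal E$, and consecutive constants differ; since $\mathcal E$ is finite, some label $\eta^\ast\ne\bar\eta:=\eta(\gamma(\tau))$ is attained by $\eta(\gamma)$ on infinitely many of these intervals, accumulating at $\tau$. By continuity $\gamma(\tau)\in\overline{Q_{\eta^\ast}}\subseteq P_{\eta^\ast}$, so $\eta^\ast\in\partial g(\gamma(\tau))$; since $\bar\eta$ is the metric projection of $\gamma(\tau)$ onto $\partial g(\gamma(\tau))$, Proposition~\ref{vorcellsandpotzones}(v) forces $|\gamma(\tau)-\eta^\ast|^2\ge|\gamma(\tau)-\bar\eta|^2+\beta$. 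By conservation of energy (Proposition~\ref{conserv}(i)), $|\dot\gamma|^2=h(|\nabla f(\gamma)|^2)+c$ therefore exceeds $h(|\gamma(\tau)-\bar\eta|^2)+c$ by at least $\mu-o(1)$ on each of the infinitely many intervals where $\gamma\in Q_{\eta^\ast}$, where $\mu:=\min_{0\le a\le M^2}\big(h(a+\beta)-h(a)\big)>0$ by strict monotonicity and continuity of $h$, with $M$ an upper bound for $|\gamma-\eta(\gamma)|$ along the curve. To reach a contradiction with local minimality one compares $\gamma$, on a short symmetric interval about $\tau$, with its metric projection onto the convex polyhedron $P_{\bar\eta}$: this projection is $1$-Lipschitz, so the kinetic term does not increase, on $P_{\bar\eta}$ the potential is at most $h(|\cdot-\bar\eta|^2)$, and the projected curve stays $O(\rho)$-close to $\gamma(\tau)$; since $\gamma$ spends a strictly positive total time — a countable sum of the positive lengths of the $Q_{\eta^\ast}$-intervals — in a zone where the potential exceeds $h(|\gamma(\tau)-\bar\eta|^2)$ by at least $\mu$, one expects the action to strictly decrease once the geometry near $\gamma(\tau)$ is controlled, contradicting minimality.

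The delicate point here — and, I believe, the core difficulty of the whole theorem — is to make this exchange quantitatively rigorous: one must bound the cost of the two end-patches that join the projected curve back to $\gamma$ (which is $O(\rho)$ in the worst case) against the gain coming from the lowered potential, and this requires a finer control of how $\gamma$ distributes its time among the potential zones clustered around $\gamma(\tau)$ — in particular a lower bound, in terms of $\rho$, on the total length of the excursions into the $\beta$-worse zones. I would expect this step, where the combinatorics of the Voronoi/potential-zone decomposition near $\gamma(\tau)$ enters in an essential way, to be the most technical part of the proof; everything else is then a routine consequence of the conservation laws of Proposition~\ref{conserv} and the reduction to the smooth obstacle problem described above.
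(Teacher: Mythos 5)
Your reduction for part (ii) is the same as the paper's: on an interval where $\eta(\gamma)\equiv\eta_0$, compare against $P_{\eta_0}$-valued competitors and use $h(|\nabla f|^2)\le h(|\cdot-\eta_0|^2)$ on $P_{\eta_0}$, with equality along $\gamma$, so that $\gamma$ locally minimizes the smooth constrained functional. But your derivation of the key regularity fact is not a proof: the assertion that ``for a.e.\ $r$, $\ddot\gamma(r)$ is the metric projection of $h'(|\gamma-\eta_i|^2)(\gamma-\eta_i)$ onto the tangent cone to $P_{\eta_i}$ at $\gamma(r)$'' presupposes exactly what must be shown, namely that $\gamma''$ is an $L^\infty$ function with no singular constraint-reaction part concentrated where $\gamma$ touches $\partial P_{\eta_i}$; your chord-replacement argument only excludes corners (jumps of one-sided velocities), not a diffuse or singular measure part of $\gamma''$. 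The paper handles this in Lemma \ref{regvinc}, by comparing with $\pi_{P}(\gamma+\epsilon\varphi)$, using the blow-up cones $P_x$, the expansion $\pi_P(x+\epsilon v)=x+\epsilon S_x(v)+o(\epsilon)$ and the contractivity of $S_x$ to get $\langle\gamma'',\varphi\rangle\le\int\frac12|\nabla\Psi(\gamma)||\varphi|$ for all test $\varphi$, which gives both $C^{1,1}$ and the stated bound. This part of your argument could be repaired along those lines.

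The genuine gap is part (i), which you yourself flag as open, and the competitor you propose cannot close it. Projecting $\gamma$ onto $P_{\bar\eta}$ over a window $[\tau-\rho,\tau+\rho]$ and patching at the two ends produces a patch cost of order $\rho$ (the endpoints of the window are at distance $O(\rho)$ from $P_{\bar\eta}$ and must be reconnected, costing $O(L^2\rho)$ in kinetic energy), while the potential gain is at most $\mu$ times the time spent in the higher zones inside the window, i.e.\ also $O(\rho)$; there is no mechanism forcing the gain to dominate, so no contradiction follows. The paper's proof supplies precisely the missing structure, working one-sidedly at a hypothetical accumulation time $t$ (with $\gamma(t)=0$): (a) a dichotomy on the set $\tilde{\mathcal E}_1$ of zones at the asymptotic lowest level $a$ that are visited infinitely often — if only one such zone occurs, the projection competitor onto $P_\eta$ can be started at a time $s$ with $\gamma(s)\in Q_\eta$ and ended at $t\in\overline{Q_\eta}\subseteq P_\eta$, so the endpoints match and there is no patch cost, and the open set of times spent in strictly higher zones forces a strict gain; (b) if two distinct lowest-level zones $\eta\neq\bar\eta$ with $|\eta|=|\bar\eta|=a$ are both visited infinitely often, a second competitor (projection onto the segment $[\gamma(s_\ell),0]$) shows the excursions through higher zones have length $O((t-s_\ell)^2)$, hence the two Voronoi cells come quadratically close; (c) Lemma \ref{distpol} then yields an interface point $x_\ell\in\overline{V_H}\cap\overline{V_{\bar H}}$ at distance $O((t-s_\ell)^2)$, and since $\frac{\eta+\bar\eta}{2}\in\partial f(x_\ell)$ with $|\eta|=|\bar\eta|=a$, $\eta\neq\bar\eta$, this interface has potential strictly below $a$; a piecewise-linear deviation through $x_\ell$ then gains an amount linear in $(t-s_\ell)$ while paying only $O((t-s_\ell)^2)$ in kinetic energy. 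Your sketch contains none of these three ingredients (no endpoint-matching case, no quadratic-time estimate, no strictly-lower-potential interface reached via a polytope-distance lemma), and without them the ``exchange'' you describe cannot be made quantitative; also note that your phrase ``between consecutive nondegenerate shock times'' presumes an interval structure of $\gamma^{-1}(Q_{\eta^*})$ near $\tau$ that is not available a priori when $NDS(\gamma)$ accumulates.
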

	\noindent Actually, if $K$ is balanced and $h$ is smooth, Theorem \ref{teoprinc} can be improved to reach piecewise smooth regularity for any local minimizer $\gamma$. In fact, since $K$ is balanced, the equality $NDS(\gamma)=S(\gamma)$ holds and point i) of Theorem \ref{teoprinc} implies that $\gamma$ has a finite number of shock times. On the other hand, Corollary \ref{reginvoronoicell} together with the smoothness of $h$ ensures that $\gamma$ is smooth in each connected component of $[0,\delta]\setminus S(\gamma)$, where clearly $\opt(\gamma(t))$ is constant. 
	\begin{cor}[$C^{\infty}$ regularity out of a finite number of shock times]
		\label{improvedreg}
		Let $\gamma$ be a local minimizer of $I^{\delta}_{f_{K},x_{0},x_{\delta}}$. Suppose that $K$ is balanced and $h$ is $C^{\infty}$ and strictly increasing. Then:
		\begin{itemize}
			\item[i)] The set $S(\gamma)$ of shock times of $\gamma$ is finite. That is to say, there is a finite number of times $0\le t_1 <\dots<t_{\ell}\le \delta$ such that the optimality class $\opt(\gamma)$ is constant in each connected component of $[0,\delta]\setminus \left\{t_1,\dots,t_{\ell}\right\}$. 
			\item[ii)] Setting $t_0 = 0$ and $t_{\ell+1}=\delta$, then $\gamma$ is $C^{\infty}$ in the interval $[t_i, t_{i+1}]$ for every $i\in [0:\ell]$. Moreover, if $t_{i+1}>t_{i}$, denoting by $H_i$ the optimality class of $\gamma$ in the interval $(t_i, t_{i+1})$, and defining $\eta_{H_i}=\eta(V_{H_i})$, then $\gamma$ solves
			\[\ddot{\gamma}=h'(|\gamma-\eta_{H_i}|^2)(\gamma - p_{H_i})\]
			in $(t_i, t_{i+1})$.
		\end{itemize} 	 
	\end{cor}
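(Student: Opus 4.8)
The plan is to obtain this corollary as a refinement of Theorem \ref{teoprinc}, exploiting the balanced hypothesis to collapse the notions of shock and nondegenerate shock, and then promoting the $C^{1,1}$ regularity to $C^{\infty}$ via the Euler-Lagrange equation of Corollary \ref{reginvoronoicell}. The first step is to observe that, $K$ being balanced, we have $\eta(x)=\eta(y)$ if and only if $\opt(x)=\opt(y)$; hence, for the given curve $\gamma$, the function $t\mapsto\eta(\gamma(t))$ is locally constant at a point $t$ precisely when $t\mapsto\opt(\gamma(t))$ is, so that $S(\gamma)=NDS(\gamma)$. Point i) of Theorem \ref{teoprinc} then immediately yields that $S(\gamma)=\{t_1,\dots,t_\ell\}$ is finite and that, combined again with balancedness, $\opt(\gamma)$ is constant on each connected component of $[0,\delta]\setminus\{t_1,\dots,t_\ell\}$, which is point i) of the statement.

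Next, I would fix $i\in[0:\ell]$ with $t_{i+1}>t_i$ and let $H_i$ denote the constant value of $\opt(\gamma(s))$ for $s\in(t_i,t_{i+1})$. Applying Corollary \ref{reginvoronoicell} on this open interval gives $\gamma=\gamma_{H_i}$, $\eta(\gamma)\equiv\eta_{H_i}:=\eta(V_{H_i})$, and, since $h$ is $C^{\infty}$, that $\gamma\in C^{\infty}((t_i,t_{i+1}))$ solves $\ddot{\gamma}=h'(|\gamma-\eta_{H_i}|^2)(\gamma-p_{H_i})$ there. It remains only to extend smoothness up to the endpoints $t_i$ and $t_{i+1}$, where a priori Theorem \ref{teoprinc} guarantees just $C^{1,1}$ regularity on the closed interval.

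For this last point I would argue by a boundary bootstrap. Set $F(x):=h'(|x-\eta_{H_i}|^2)(x-p_{H_i})$, a smooth vector field on $\mathbb{R}^d$, so that $\ddot{\gamma}=F(\gamma)$ on $(t_i,t_{i+1})$. Since $\gamma$ is $C^1$ on $[t_i,t_{i+1}]$, for a fixed $s_0\in(t_i,t_{i+1})$ and any $s\in[t_i,t_{i+1}]$ one has $\dot{\gamma}(s)=\dot{\gamma}(s_0)+\int_{s_0}^{s}F(\gamma(r))\,dr$ (the identity on the open interval passes to the closed one by continuity of both sides), and the integrand is continuous on $[t_i,t_{i+1}]$ because $\gamma$ is. Hence $\dot{\gamma}\in C^1([t_i,t_{i+1}])$, i.e. $\gamma\in C^2([t_i,t_{i+1}])$ and the identity $\ddot{\gamma}=F(\gamma)$ holds on the whole closed interval. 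The usual induction then applies: if $\gamma\in C^k([t_i,t_{i+1}])$ then $F\circ\gamma\in C^k$ by smoothness of $F$, whence $\ddot{\gamma}\in C^k$ and $\gamma\in C^{k+2}([t_i,t_{i+1}])$; letting $k\to\infty$ gives $\gamma\in C^{\infty}([t_i,t_{i+1}])$ together with the displayed equation, which is point ii).

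I do not expect any substantial difficulty here: the delicate work --- the finiteness of the nondegenerate shock set and the second-order bound --- is already contained in Theorem \ref{teoprinc}, and Corollary \ref{reginvoronoicell} already provides interior smoothness under the $C^{\infty}$ assumption on $h$. The only point requiring a little care is the bootstrap at the shock times, where $\gamma$ is known a priori only to be $C^{1,1}$ on the closed interval, so one must first make sure the integral representation of $\dot{\gamma}$ is valid up to the endpoints before iterating.
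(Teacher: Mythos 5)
Your proof is correct and follows essentially the same route as the paper: balancedness gives $S(\gamma)=NDS(\gamma)$, point i) of Theorem \ref{teoprinc} then yields finiteness of the shock set, and Corollary \ref{reginvoronoicell} together with the smoothness of $h$ gives the interior equation and smoothness on each component. Your endpoint bootstrap, upgrading the a priori $C^{1,1}$ regularity on the closed intervals to $C^{\infty}$ up to the shock times, is a correct treatment of a detail the paper leaves implicit.
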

	\begin{oss}
		Corollary \ref{improvedreg} offers a generalization of \cite[Theorem 13]{AmbBrMAG}, in which smooth regularity out of a finite number of shock times was derived for solutions of a $1$-dimensional version of the discrete MAG model. In their framework, $h$ was simply the identity, while $K$ consisted of all the $m!$ points of $\mathbb{R}^{d}$ obtainable by permuting the components of a fixed vector $A=(a_1,\dots,a_d)\in \mathbb{R}^{d}$, with $a_1<\dots<a_d$. Exploiting the rearrangement inequality provided by the order structure of $\mathbb{R}$, it is not difficult to show that hypothesis (\ref{hypperregvor}) holds in this case, therefore implying that $K$ is balanced.\fr    
	\end{oss}
	Let us now give an overview of the proof of Theorem \ref{teoprinc} before entering in the details of the forthcoming paragraphs. We start discussing how point ii) can be derived from point i). We need to show that if $\eta(\gamma)=\eta$ is constant in an interval $(s,t)$, then $\gamma$ is $C^{1,1}$ regular in $[s,t]$. We clearly have that $\gamma([s,t])\subset P_{\eta}$. Then we observe that for every absolutely continuous curve $\rho:[s,t]\rightarrow P_{\eta}$, due to the monotonicity of $h$ and inequality (\ref{separazionelivellipotenziale}), the following inequality holds:
	\begin{equation}\label{comparison}
		\int_{s}^{t}|\dot{\rho}|^2+ h(|\nabla f|^{2}(\rho)) \le \int_{s}^{t}|\dot{\rho}|^2+h(|\eta - \rho|^2).
	\end{equation}
	Then, by a comparison argument, knowing that $\gamma$ is a local minimizer of the functional on the left hand side of (\ref{comparison}), and that $\rho = \gamma$ saturates the inequality, we get that $\gamma$ is a local minimizer for the functional on the right hand side of (\ref{comparison}), if one restricts to curves living in the closed convex set $P_\eta$. On the other hand, we are going to prove in Lemma \ref{regvinc} that any such constrained local minimizer is necessarily $C^{1,1}$ regular. We now pass to the proof of point i) of the Theorem. First notice that we can restrict ourselves to prove the following equivalent local statement:
	\begin{equation}\label{claim}
		\text{\textbf{Claim}:\quad for each time $t\in (0,\delta]$, there exists $\epsilon>0$ such that $\eta(\gamma)$ is constant in $(t-\epsilon, t)$.}
	\end{equation}
	\noindent From claim (\ref{claim}) and the analogous one for right intervals (obtainable by exploiting the autonomicity of the functional), it clearly descends that $NDS(\gamma)$ is discrete. Then, by compactness, we derive that $NDS(\gamma)$ is finite. The proof of claim (\ref{claim}) will be elementary accomplished by a quite intricated series of \lq\lq cut and paste'' constructions of competitors. Let us first outline the general heuristic idea. We divide the proof in a few steps: 
	
	\noindent \textbf{\underline{Step 1}}. Suppose by contradiction that $t$ is a cluster point for the \lq\lq jumps'' in the potential. Then, approaching $t$ from the left, $\gamma$ would infinitely often visit high and low potential zones. If only one low potential zone were visited asymptotically, then it would be convenient to stay in it definitely. Therefore we can assume that $\gamma$ infinitely often visits at least two different low potential zones, approaching $t$ from the left. Moreover, in alternating between different low potential zones, $\gamma$ necessarily spends a non negligible amount of time in high potential ones. 
	
	\noindent \textbf{\underline{Step 2}}. Approaching $t$ from the left, the percentage of time spent by $\gamma$ in high potential zones tends to zero, thus enforcing at least two different low potential zones to be very near to each other in order to make it possible for the Lipschitz curve $\gamma$ to jump from one to the other in a short time.
	
	\noindent \textbf{\underline{Step 3}}. By Lemma \ref{distpol} we will be eventually allowed to slightly deviate $\gamma$ to reach an even lower potential zone (the interface between the two), hence contradicting the local optimality of $\gamma$ and reaching the desired absurdum. 
	
	In the following paragraph we prove Lemmas \ref{regvinc} and $\ref{distpol}$. As a byproduct of Lemma \ref{regvinc}, we obtain that a local minimizer is $C^{1,1}$ regular as long as it stays in a single potential zone (see Corollary \ref{regpotzone}), hence addressing point ii) of Theorem \ref{teoprinc}. Subsequently, in the final paragraph, we will provide the rigorous proof of claim \ref{claim} along the lines of the heuristics above, thus concluding the proof of Theorem \ref{teoprinc}. 
	\paragraph{Two Lemmas.} The first Lemma, of indpendent interest, concerns the regularity of local minimizers of action functionals restricted to curves living in a given closed convex set. Because of boundary effects, such constrained minimizers are in general not $C^{2}$, even if the Lagrangian is smooth. Nevertheless, they must be at least $C^{1,1}$, whenever the Lagrangian is $C^{1}$.     
	\begin{lem}[Regularity for a problem with a convex constraint]
		\label{regvinc}
		Let $P\subseteq \mathbb{R}^{d}$ be a closed convex set, and let $\Psi:\mathbb{R}^{d}\rightarrow [0,+\infty)$ be of class $C^1$ in a neighborhood of $P$. Given two points $x_0, x_\delta \in P$, we consider the functional $\mathcal{G}:C([0,\delta],\mathbb{R}^d)\rightarrow [0,+\infty]$ defined by
		\[\mathcal{G}(\gamma)=
		\begin{cases}
			\displaystyle\int_{0}^{\delta}|\dot{\gamma}|^2+\Psi(\gamma)\quad &\text{if $\gamma \in AC([0,\delta],\mathbb{R}^d), \gamma(0)=x_0, \gamma(\delta)=x_\delta, \gamma([0,\delta])\subseteq P$},\\
			+\infty  &\text{otherwise}.
		\end{cases}
		\]
		Then every local minimizer $\gamma$ of $\mathcal{G}$ is of class $C^{1,1}$ and we can estimate
		\[|\ddot{\gamma}(r)|\le \frac{1}{2}|\nabla \Psi (\gamma(r))|\quad \text{for a.e.\ $r\in (0,\delta)$}.\] 
	\end{lem}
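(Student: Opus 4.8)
The plan is to proceed in three stages: first derive a one-sided Euler–Lagrange/variational inequality for the constrained minimizer $\gamma$, then upgrade the regularity of $\dot\gamma$ from $L^2$ to Lipschitz by exploiting the one-sided inequality together with the convexity of $P$, and finally extract the pointwise bound on $|\ddot\gamma|$. The key structural fact I would use is that, since $P$ is convex, for any admissible competitor direction $\varphi\in C^\infty_c((0,\delta),\mathbb{R}^d)$ and $\epsilon>0$ small, the convex combination $\gamma_\epsilon := (1-\epsilon)\gamma + \epsilon\big(\gamma + \varphi\big)$ — more precisely $\gamma_\epsilon := \gamma + \epsilon(\rho - \gamma)$ for any admissible $\rho:[0,\delta]\to P$ with the right endpoints — stays in $P$ by convexity, and is a legitimate competitor. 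Plugging $\gamma_\epsilon$ into $\mathcal{G}$, using that $\Psi$ is $C^1$ near $P$ and $\gamma$ is bounded, and letting $\epsilon\to 0^+$, I obtain the variational inequality
\[
\int_0^\delta 2\,\dot\gamma\cdot(\dot\rho - \dot\gamma) + \nabla\Psi(\gamma)\cdot(\rho - \gamma)\,dt \ge 0
\]
for every $\rho\in AC([0,\delta],P)$ with $\rho(0)=x_0$, $\rho(\delta)=x_\delta$. This is the convex-analytic substitute for the Euler–Lagrange equation: it says $-2\ddot\gamma + \nabla\Psi(\gamma)$ lies, in a weak sense, in the normal cone to $P$ along $\gamma$.

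The heart of the argument is the regularity bootstrap. From the inequality above, testing with $\rho = \gamma \pm \varphi$ for $\varphi\in C^\infty_c$ supported in a subinterval where $\gamma$ stays in the relative interior of $P$ recovers the classical equation $2\ddot\gamma = \nabla\Psi(\gamma)$ there, giving $C^{1,1}$ (indeed $C^2$) locally away from the boundary. The genuinely delicate point, and the one I expect to be the main obstacle, is the behaviour near times where $\gamma$ touches $\partial P$: there the normal-cone term is active and one must show $\dot\gamma$ does not jump. The standard technique is a "reflection"/obstacle-problem comparison: on a small interval $[a,b]$ where $\gamma$ may touch $\partial P$, compare $\gamma$ with the curve $\sigma$ obtained by first solving the free equation and then projecting onto $P$, or more robustly, exploit that $t\mapsto \gamma(t)$ is the solution of a variational inequality with a $C^1$ (hence locally Lipschitz, via the bound $|\nabla\Psi|$) forcing term, and invoke the $W^{2,\infty}$ regularity theory for such one-dimensional obstacle problems. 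Concretely, one writes $\gamma$ as a minimizer, uses the convexity of $P$ to build competitors that "shortcut" any putative corner of $\gamma$ by a chord of $P$, and shows such a shortcut strictly decreases $\int|\dot\gamma|^2$ by an amount controlling the jump of $\dot\gamma$, while changing $\int\Psi(\gamma)$ by only $O(\epsilon^2)$ since $\Psi$ is $C^1$ and the competitors converge uniformly; letting the shortcut shrink forces the one-sided derivatives $\dot\gamma_-(t)$ and $\dot\gamma_+(t)$ to coincide, i.e. $\dot\gamma\in C^0$, and a quantitative version of the same estimate yields the Lipschitz bound on $\dot\gamma$.

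For the final quantitative estimate $|\ddot\gamma(r)|\le \tfrac12|\nabla\Psi(\gamma(r))|$ a.e., I would argue pointwise at a differentiability point $r$ of $\dot\gamma$: if $\gamma(r)$ is in the relative interior of $P$ we have the exact equation $2\ddot\gamma(r)=\nabla\Psi(\gamma(r))$; if $\gamma(r)\in\partial P$, then the variational inequality forces $2\ddot\gamma(r) - \nabla\Psi(\gamma(r))$ to point into the normal cone $N_P(\gamma(r))$ while simultaneously $\ddot\gamma(r)$, being the second derivative of a curve that stays in the convex set $P$ and touches $\partial P$ at $r$, must point into the tangent side — i.e. $\ddot\gamma(r)\in -N_P(\gamma(r))^\circ$ direction; decomposing $\nabla\Psi(\gamma(r))$ along the normal and tangent directions to $P$ at $\gamma(r)$, the tangential component equals $2\ddot\gamma(r)$ (the normal component is absorbed by the constraint reaction, which does no work against $\ddot\gamma$), so $|2\ddot\gamma(r)|\le|\nabla\Psi(\gamma(r))|$. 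I would make the "decomposition" rigorous using that along the contact set $\dot\gamma(r)$ is tangent to $P$ and differentiating the constraint, exactly as in the classical analysis of geodesics/Newtonian dynamics with a convex obstacle. This last step is routine once the $C^{1,1}$ regularity is in hand; the obstacle-problem regularity near the contact set is where the real work lies.
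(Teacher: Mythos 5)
Your overall skeleton (variational inequality, then regularity up to the contact set, then the pointwise bound) is reasonable, and Steps 1 and 3 are fine in spirit, but Step 2 — the only genuinely hard part — is not actually proved, and neither of the two routes you sketch delivers it. Invoking ``$W^{2,\infty}$ regularity theory for one-dimensional obstacle problems'' is essentially citing the statement of the lemma itself (here the constraint is $\gamma(t)\in P$ for a general closed convex $P$, i.e.\ a state constraint, not a scalar obstacle), so as a proof it is circular. The concrete argument you propose instead, shortcutting a putative corner by a chord of $P$, is quantitatively too weak: comparing $\gamma$ on an interval $I=[s,t]$ with the affine chord, the kinetic gain is $\int_I|\dot\gamma-(\dot\gamma)_I|^2$, while the potential cost is only controlled by $\mathrm{Lip}(\Psi)\,\|\mathrm{chord}-\gamma\|_\infty\,|I|=O(|I|^2)$, since the deviation is first order in $|I|$ and you have no cancellation in $\int\nabla\Psi(\gamma)\cdot(\mathrm{chord}-\gamma)$ without already knowing the normal-cone structure at the boundary. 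This yields $\fint_I|\dot\gamma-(\dot\gamma)_I|^2\le C|I|$, i.e.\ by Campanato only $\dot\gamma\in C^{0,1/2}$ (and absence of corners), not the Lipschitz bound on $\dot\gamma$; your phrase ``a quantitative version of the same estimate yields the Lipschitz bound'' is precisely where the argument breaks down. Since Step 3 presupposes the a.e.\ decomposition $2\ddot\gamma=\nabla\Psi(\gamma)-w$ with $w$ in the dual of the tangent cone and no singular part in the reaction, it cannot repair this: it only works once $C^{1,1}$ is known.

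The paper's proof avoids the obstacle-problem machinery altogether with one device you are missing: two-sided \emph{projected} variations. For arbitrary $\varphi\in C^\infty_c((0,\delta);\mathbb{R}^d)$ one takes $\gamma_\epsilon:=\pi_P(\gamma+\epsilon\varphi)$, which is automatically admissible; the $1$-Lipschitz property of $\pi_P$ controls the kinetic term, and the first-order expansion $\pi_P(x+\epsilon v)=x+\epsilon S_x(v)+o(\epsilon)$, where $S_x$ is the (positively homogeneous, $1$-Lipschitz) projection onto the tangent cone $P_x$, controls the potential term to first order in $\epsilon$. Local minimality then gives
\[
-\int_0^\delta \dot\gamma\cdot\dot\varphi \;\le\; \frac12\int_0^\delta \nabla\Psi(\gamma)\cdot S_{\gamma}(\varphi)\;\le\;\frac12\int_0^\delta|\nabla\Psi(\gamma)|\,|\varphi|
\]
for every $\varphi$ (both signs, all directions), i.e.\ the distributional second derivative $\ddot\gamma$ is an $L^\infty$ function with $|\ddot\gamma|\le\frac12|\nabla\Psi(\gamma)|$ a.e.; this gives $C^{1,1}$ and the sharp constant in a single stroke, with no separate analysis of the contact set. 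If you want to salvage your plan, this projection trick (or an equivalent first-order expansion of the constraint along tangent cones) is the missing ingredient that upgrades your one-sided variational inequality to a two-sided $L^\infty$ bound on $\ddot\gamma$.
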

	\begin{proof}
		We call $\pi_P$ the projection on $P$.
		We start by defining, for each point $x\in P$, the \lq\lq blow up'' of $P$ at $x$, namely the closed convex cone $P_{x}$ defined by the formula 
		\[
		P_{x}+x=
		\begin{cases}
			\mathbb{R}^{d}	&\text{if $x\in \mathring{P}$},\\
			\bigcap\left\{\mathcal{H} \text{ half space}: P\subset \mathcal{H},\, x\in \partial \mathcal{H}\right\}\quad &\text{if $x\in \partial P$}.
		\end{cases}
		\]
		We then call $S_{x}$ the projection on $P_{x}$, which turns out to be positive homogeneous and $1$-Lipschitz. It is not difficult to realize that 
		\begin{equation}\label{Hausconv}
			\frac{P-x}{\epsilon}\xrightarrow{\epsilon\rightarrow 0^{+}}P_{x}
		\end{equation}
		in the sense of Hausdorff on every compact set. Since the inclusion $(P-x)/\epsilon \subseteq P_x$ always holds, in order to show the convergence (\ref{Hausconv}), we only need to check, using a separation argument, that, for every $\epsilon_j \rightarrow 0^{+}$, and every point $z\in P_x$, there exists a sequence $y_j \in (P-x)/\epsilon_j$ converging to $z$. Hence, the projection on $(P-x)/\epsilon$ pointwise converges to $S_x$ as $\epsilon \rightarrow 0^{+}$. By exploiting also the homogeneity of $S_x$ we eventually obtain that, for every $v\in
		\mathbb{R}^{d}$,
		\begin{equation}\label{pointwiseexpansion}
			\pi_{P}(x+\epsilon v)=x+\epsilon S_{x}(v)+o(\epsilon),\quad \text{ as $\epsilon \rightarrow 0^{+}$}.
		\end{equation}
		Now, let $\gamma$ be a local minimizer for the functional $\mathcal{G}$. Given a test function $\varphi \in C^{\infty}_{c}((0,\delta); \mathbb{R}^{d})$, we consider the following competitors:
		\[\gamma_{\epsilon}:=\pi_{P}(\gamma+\epsilon \varphi),\quad \text{for $\epsilon>0$}.\]
		Since $\pi_{P}$ is $1$-Lipschitz, we have $|\gamma_{\epsilon}'|\le |\gamma'+\epsilon \varphi'|$. Then
		\[\mathcal{G}(\gamma_{\epsilon})\le \mathcal{G}(\gamma)+2\epsilon \int_{0}^{\delta}\gamma'\cdot \varphi'+\frac{\Psi(\gamma_{\epsilon})-\Psi(\gamma)}{2\epsilon}+\epsilon^{2}\int_{0}^{\delta}|\varphi'|^2.\]
		Using the previous pointwise expansion (\ref{pointwiseexpansion}), as well as the dominated convergence Theorem, we obtain
		\begin{equation}\label{positivoRHS}
			\underset{\epsilon \rightarrow 0^{+}}{\limsup}\,\frac{\mathcal{G}(\gamma_{\epsilon})-\mathcal{G}(\gamma)}{2\epsilon}\le \int_{0}^{\delta}\gamma'\cdot \varphi' + \frac{1}{2}\nabla \Psi(\gamma)\cdot S_{\gamma}(\varphi).
		\end{equation}
		From the local minimality of $\gamma$, it follows immediately that the right hand side in (\ref{positivoRHS}) is nonnegative. Finally, we can use the contractiveness of the projections $S_x$ to get the inequality
		\[\langle \gamma'', \varphi \rangle \le \int_{0}^{\delta}\frac{1}{2}|\nabla \Psi(\gamma)||\varphi|\quad \text{for every $\varphi \in C^{\infty}_{c}((0,\delta); \mathbb{R}^{d})$},\]
		and the thesis follows.
	\end{proof}
	\noindent As already shown above, in the comparison argument accompanying inequality (\ref{comparison}), from Lemma \ref{regvinc} descends the following
	\begin{cor}[Regularity in a potential zone]
		\label{regpotzone}
		Let $h$ be a nondecreasing potential shape, and let $\gamma$ be a local minimizer of $I^{\delta}_{f_{K},x_{0},x_{\delta}}$. Suppose that there exists an $\eta \in \mathcal{E}$ such that $\gamma((s,t))\subseteq Q_{\eta}$, where $(s,t)\subset [0,\delta]$. Then $\gamma$ is $C^{1,1}$ regular on $[s,t]$ and the following estimate holds
		\[|\ddot{\gamma}(r)|\le h'(|\gamma(r)-\eta|^2)|\gamma(r)-\eta|\quad \text{for a.e.\ $r\in (s,t)$}.\] 
	\end{cor}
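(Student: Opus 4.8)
The plan is to recognise $\gamma$ restricted to $[s,t]$ as a constrained local minimizer to which Lemma~\ref{regvinc} applies, with convex constraint set $P=P_{\eta}$ and potential $\Psi(x):=h(|\eta-x|^{2})$, and then translate the conclusion of that Lemma into the asserted estimate.

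First I would collect the geometric facts. By continuity $\gamma([s,t])\subseteq\overline{Q_{\eta}}$, and since $Q_{\eta}\subseteq P_{\eta}$ (Proposition~\ref{vorcellsandpotzones}(iii)) while $P_{\eta}$ is a polyhedron, hence closed (Proposition~\ref{vorcellsandpotzones}(iv)), we get $\gamma([s,t])\subseteq P_{\eta}$. Next comes the key pointwise inequality: for every $x\in P_{\eta}$ one has $|\nabla f(x)|^{2}\le|\eta-x|^{2}$, with equality precisely when $x\in Q_{\eta}$. This is immediate from Proposition~\ref{propsuboppdist}, since $|\nabla f(x)|=\dist(x,\partial g(x))$ and $\eta\in\partial g(x)$ by the very definition of $P_{\eta}$; equality holds iff $\eta$ is the point of $\partial g(x)$ nearest to $x$, i.e.\ iff $\eta=\eta(x)$, i.e.\ iff $x\in Q_{\eta}$. (Alternatively, for $x\in Q_{\bar{\eta}}\cap P_{\eta}$ with $\bar{\eta}\neq\eta$ one may invoke (\ref{separazionelivellipotenziale}).) As $h$ is nondecreasing, $h(|\nabla f|^{2}(\rho))\le h(|\eta-\rho|^{2})$ pointwise along any curve $\rho$ valued in $P_{\eta}$ — this is the comparison inequality (\ref{comparison}) — and since $\gamma(r)\in Q_{\eta}$ for all $r\in(s,t)$, the inequality is saturated by $\rho=\gamma$:
\[\int_{s}^{t}|\dot\gamma|^{2}+h(|\nabla f|^{2}(\gamma))=\int_{s}^{t}|\dot\gamma|^{2}+h(|\eta-\gamma|^{2}).\]

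Then I would transfer the minimality. Restricting a local minimizer to the subinterval $[s,t]$ yields a local minimizer of the same action with the endpoint constraints $\gamma(s),\gamma(t)$ (any close competitor on $[s,t]$ is extended by $\gamma$ outside $[s,t]$). For such a competitor $\rho$ that is moreover valued in $P_{\eta}$ — note $\rho(s),\rho(t)\in P_{\eta}$ automatically — I chain the comparison inequality at $\rho$, the local minimality of $\gamma$, and the saturation at $\gamma$ to obtain $\int_{s}^{t}|\dot\gamma|^{2}+h(|\eta-\gamma|^{2})\le\int_{s}^{t}|\dot\rho|^{2}+h(|\eta-\rho|^{2})$. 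Hence $\gamma|_{[s,t]}$ is a local minimizer of the functional $\mathcal{G}$ of Lemma~\ref{regvinc} with $P=P_{\eta}$ and $\Psi(x)=h(|\eta-x|^{2})$, which is nonnegative and $C^{1}$ near $P_{\eta}$ since $h\in C^{1}([0,+\infty))$. Lemma~\ref{regvinc} then gives $\gamma\in C^{1,1}([s,t])$ together with $|\ddot\gamma(r)|\le\tfrac12|\nabla\Psi(\gamma(r))|$ a.e.; since $\nabla\Psi(x)=2h'(|\eta-x|^{2})(x-\eta)$ and $h'\ge0$, this becomes $|\ddot\gamma(r)|\le h'(|\gamma(r)-\eta|^{2})|\gamma(r)-\eta|$, as claimed.

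The only point that deserves genuine care — essentially the sole obstacle — is the passage to the constrained problem: one must verify that every uniformly close competitor valued in $P_{\eta}$ with the correct endpoints is admissible for comparison against $\gamma$, so that local minimality of $\gamma$ for the original action really does force local minimality for $\mathcal{G}$. This is exactly what the saturation $\rho=\gamma$ in (\ref{comparison}), together with the fact that $\mathcal{G}\equiv+\infty$ off curves valued in $P_{\eta}$, takes care of; everything else is bookkeeping.
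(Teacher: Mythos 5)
Your argument is correct and coincides with the paper's own proof: the same comparison inequality (\ref{comparison}) on $P_{\eta}$, saturated by $\gamma$ since $\gamma((s,t))\subseteq Q_{\eta}$, transfers local minimality to the constrained functional, and Lemma~\ref{regvinc} with $P=P_{\eta}$ and $\Psi(x)=h(|\eta-x|^{2})$ yields the $C^{1,1}$ regularity and the stated bound on $|\ddot{\gamma}|$. No gaps; your care about admissibility of competitors valued in $P_{\eta}$ is exactly the comparison argument the paper sketches.
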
 
	\noindent Thus, point ii) of Theorem \ref{teoprinc} is proved. We can now address the second Lemma, which will turn out to be crucial in the proof of point i) of Theorem \ref{teoprinc}. 
	\begin{lem}[Reciprocal distance of intersecting polytopes]
		\label{distpol}
		Let $A,B \subset \mathbb{R}^{d}$ be two polytopes with $A\cap B \neq \emptyset$. Then there exists a sufficiently large constant $M>0$ such that
		\[\dist_{A\cap B}(x)\le M \dist_{B}(x)\quad \text{for every $x \in A$}.\] 
	\end{lem}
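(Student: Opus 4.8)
The plan is to reduce the estimate to the classical Hoffman error bound for systems of linear inequalities. First I would invoke the definition (\ref{rapprepoliedri}) to write the two polytopes as finite intersections of half-spaces, say $A=\bigcap_{i=1}^{p}\{x:\ell_i(x)\le 0\}$ and $B=\bigcap_{j=1}^{q}\{x:m_j(x)\le 0\}$ with $\ell_i,m_j$ affine and $m_j(x)=\langle b_j,x\rangle-\beta_j$. Then $A\cap B$ is the polyhedron cut out by all $p+q$ inequalities simultaneously, and it is nonempty by hypothesis, so Hoffman's lemma furnishes a constant $L>0$ (depending only on the system, hence only on $A$ and $B$) such that
\[
\dist_{A\cap B}(x)\le L\Bigl(\sum_{i=1}^{p}\ell_i(x)^{+}+\sum_{j=1}^{q}m_j(x)^{+}\Bigr)\qquad\text{for every }x\in\mathbb{R}^{d}.
\]

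The core of the argument is then two entirely elementary observations. For $x\in A$ one has $\ell_i(x)\le 0$ for all $i$, so the right-hand side collapses to $L\sum_j m_j(x)^{+}$. And for an arbitrary $x$, letting $y=\pi_B(x)$ be the Euclidean projection of $x$ onto the closed convex set $B$, the inequality $m_j(y)\le 0$ together with Cauchy--Schwarz gives $m_j(x)^{+}\le\bigl(m_j(x)-m_j(y)\bigr)^{+}=\langle b_j,x-y\rangle^{+}\le |b_j|\,|x-y|=|b_j|\,\dist_B(x)$. Combining the two yields $\dist_{A\cap B}(x)\le\bigl(L\sum_{j}|b_j|\bigr)\dist_B(x)$ for every $x\in A$, which is the assertion with $M:=\max\{1,\,L\sum_j|b_j|\}$ (the maximum with $1$ just absorbs the degenerate case $q=0$). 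Note that compactness of $A$ is not actually needed for this route; only nonemptiness of $A\cap B$ and the fact that $A,B$ are polyhedra enter.

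The only non-elementary ingredient is Hoffman's lemma itself, which I would cite as the classical result it is rather than reprove; this is the step that carries all the real content, and no genuinely simpler substitute seems available, since some quantitative control of the ``angle'' between $A$ and $B$ is unavoidable. If a self-contained proof is preferred, one can argue instead by contradiction and compactness: since $A$ is a polytope, the ratio $\dist_{A\cap B}(x)/\dist_B(x)$ is continuous on $A\setminus B$ and, were it unbounded, it would blow up along some sequence $x_n\to x_\infty\in A\cap\partial B$; localizing near $x_\infty$ and replacing $A$ and $B$ by the blow-up cones $A_{x_\infty}+x_\infty$ and $B_{x_\infty}+x_\infty$ (exactly the tangent-cone construction used in the proof of Lemma \ref{regvinc}), the inequality reduces to the case of two polyhedral cones sharing their apex, where it follows by positive homogeneity together with an induction on the number of facets. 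I expect the only fussy points in either approach to be the bookkeeping of norm equivalences for the vector of constraint violations and the handling of the trivial/degenerate cases; the geometric substance is fully captured by the Hoffman-type bound.
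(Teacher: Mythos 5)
Your argument is correct, and structurally it mirrors the paper's proof: both reduce the claim to an error bound of Hoffman type for the full system of affine inequalities defining $A\cap B$, observe that for $x\in A$ the constraints coming from $A$ contribute nothing, and bound each violated constraint of $B$ by $\dist_{B}(x)$ (your Cauchy--Schwarz step with $y=\pi_{B}(x)$ is exactly the paper's device of normalizing the $T_{j}$'s to be $1$-Lipschitz and using the left inequality in (\ref{disugambrosio})). The only genuine difference is where the quantitative core comes from: you cite Hoffman's lemma as a classical black box, while the paper proves the needed bound $\dist_{P}(x)\le c_{P}|z_{P}(x)|_{1}$ directly, by a short normalization-and-compactness argument that uses the boundedness of the polytope to extract a cluster point. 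Your route buys generality -- as you note, it works for arbitrary polyhedra with $A\cap B\neq\emptyset$ and never uses compactness of $A$, whereas the paper's argument as written does need the polytope hypothesis (which is why, in the application, the closures of the Voronoi cells are first intersected with a large cube) -- at the price of an external reference; the paper's version keeps the lemma self-contained. Your fallback sketch via blow-up cones and induction on the number of facets is in the same compactness spirit but is noticeably less complete than the paper's one-paragraph contradiction argument, so if a self-contained proof is desired, the paper's normalization trick is the simpler one to carry out.
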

	\begin{proof}
		Let $P\subset \mathbb{R}^{d}$ be a polytope endowed with a representation of the form (\ref{rapprepoliedri}), where we assume without loss of generality that each of the affine functions $T_{j}$ has Lipschitz constant equal to $1$. 
		Let us consider the associated vector valued function $z_{P}:\mathbb{R}^{d}\rightarrow [0,+\infty)^{\ell}$ defined by
		\begin{equation}
			z_{P}(x)=\left(T_{1}(x)^{+},\dots,T_{\ell}(x)^{+}\right).
		\end{equation}
		As a first step we show that there exists a constant $c_P>0$ such that
		\begin{equation}\label{disugambrosio}
			T_{j}(x)^{+}\le \dist_{P}(x)\le c_{P}|z_{P}(x)|_{1}\quad \text{for every $j\in \left\{1,\dots,\ell \right\}$ and every $x\in \mathbb{R}^{d}$}.
		\end{equation}
		The left inequality follows by the $1$-Lipschitz assumption on $T_j$, while the right one can be obtained using a compactness argument as follows. Suppose by contradiction that there exists a sequence of points $x_n \in \mathbb{R}^{d}\setminus P$ such that 
		\[\dist_{P}(x_n)\ge n|z_{P}(x_n)|_{1}.\] 
		Up to replacing $x_n$ with 
		\[\pi_{P}(x_n)+ \frac{x_n - \pi_{P}(x_n)}{\dist_{P}(x_n)},\] 
		we can assume that $\dist_{P}(x_n)=1$. Now, if $x$ is any cluster point for $x_n$, we obtain that $\dist_{P}(x)=1$ and $z_{P}(x)=0$, which is clearly a contradiction. We are now in the position to prove the Lemma.
		Let $\ell \in \mathbb{N}$ be the number of affine functions in a representation of $B$ of the form (\ref{rapprepoliedri}), with $1$-Lipschitz affine functions $T_{j}$. By exploiting the estimates in (\ref{disugambrosio}), for every $x\in A$ we can bound 
		\[\dist_{A\cap B}(x)\le c_{A\cap B}|z_{A\cap B}(x)|_1\le \ell c_{A\cap B}\dist_{B}(x),\]
		where $c_{A\cap B}$ is a positive constant. The thesis follows by choosing $M=\ell c_{A\cap B}$. 
	\end{proof}
	
	\paragraph{Proof of the regularity result.}
	In this paragraph we are going to complete the proof of our main regularity result, Theorem \ref{teoprinc}, following the heuristic idea outlined above. It remains to prove point i), and we already restricted to show claim (\ref{claim}). Throughout the proof, $\gamma$ will be a fixed local minimizer of $I^{\delta}_{f_{K},x_{0},x_{\delta}}$ and $L$ the Lipschitz constant of $\gamma$. Moreover, for the sake of simplicity, we indicate $I^{\delta}_{f_{K},x_{0},x_{\delta}}$ as $\mathcal{F}$. As a preliminary observation, notice that we can choose an $R>0$ large enough such that $\gamma([0,\delta])\subset B_{R}$, and then, by point v) in Proposition \ref{vorcellsandpotzones}, we can find an $\alpha>0$ small enough such that
	\begin{equation}\label{prelimfact}
		|\bar{\eta}-x|\ge |\eta-x|+\alpha\quad \text{for every $x\in B_{2R}\cap Q_{\eta}\cap P_{\bar{\eta}}$ and every $\eta, \bar{\eta} \in \mathcal{E}$}.
	\end{equation} 
	\begin{proof}[Proof of claim \ref{claim}]
		First of all, by translation invariance, we can assume without loss of generality that $\gamma(t)=0$, thus simplifying some further computations. Then we consider the asymptotic lowest potential threshold 
		\[a:=\underset{s\rightarrow t^{-}}{\liminf}\,|\eta(\gamma(s))|.\]  
		We call $\tilde{\mathcal{E}}$ the subset of $\mathcal{E}$ indexing the potential zones that are visited infinitely often before $t$. Namely,
		\[\tilde{\mathcal{E}}:=\left\{\eta \in \mathcal{E}: \gamma^{-1}(Q_{\eta})\cap (0,t) \text{ accumulates in $t$}\right\}.\]
		Notice that the thesis is equivalent to $\# \tilde{\mathcal{E}}=1$. The set $\tilde{\mathcal{E}}$ can be partitioned into two subsets $\tilde{\mathcal{E}}_{1}$ and $\tilde{\mathcal{E}}_{2}$ defined by
		\begin{align*}
			\tilde{\mathcal{E}}_{1}&:=\left\{\eta\in \tilde{\mathcal{E}}: |\eta|=a\right\},\\
			\tilde{\mathcal{E}}_{2}&:=\left\{\eta\in \tilde{\mathcal{E}}: |\eta|>a\right\}.
		\end{align*} 
		The set $\tilde{\mathcal{E}}_{1}$, which is clearly nonempty by the definition of $a$, corresponds to those potential zones which are infinitely often visited by $\gamma$ before $t$ and that share the asymptotic lowest potential threshold $a$. We expect the curve $\gamma$ to spend most of the time there.
		
		We make the following technical choices of constants in order to simplify later arguments. We fix $r,\mu,\epsilon >0$ small enough so that these requirements are satisfied:
		\begin{itemize}
			\item[R1)] For every $x\in B_{r}$ we have $\opt(x)\subseteq \opt(0)$.
			\item[R2)] $\gamma((t-\epsilon,t))\subset B_{r}$.
			\item[R3)] For every $x\in B_{r}$, and for every $\eta \in \mathcal{E}$, we have $|\eta|-\mu <|x-\eta|<|\eta|+\mu$.
			\item[R4)] $3\mu \le \alpha$.
			\item[R5)] For every choice of $\eta, \bar{\eta} \in \mathcal{E}$, exactly one of the following holds:
			\[|\eta|=|\bar{\eta}|,\quad \min \left\{|\eta|, |\bar{\eta}|\right\} \le \max \left\{|\eta|, |\bar{\eta}|\right\}-3\mu.\]
			\item[R6)] $(t-\epsilon,t)\subseteq \underset{\eta \in \tilde{\mathcal{E}}}{\bigcup}\gamma^{-1}(Q_{\eta})$.
		\end{itemize}
		Notice that thanks to R1), for every $x\in B_{r}$ the segment $[x,0)$ will be entirely contained in the Voronoi cell $V_{\opt(x)}$, while R2) assures us that the curve belongs to this good area. Conditions R3), R4) and R5) will be useful to effectively distinguish between different potential zones. Finally, condition R6) implies that the potential zones touched by $\gamma$ in the interval $(t-\epsilon,t)$ are exactly those touched asymptotically. Thus, in particular, we have 
		\[a=\underset{s\in(t-\epsilon,t)}{\min}|\eta(\gamma(s))|.\]
		
		\noindent By condition R6), the interval $(t-\epsilon,t)$ can be partitioned into the following two sets:
		\begin{align*}
			C_{1}&:=(t-\epsilon,t)\cap \underset{\eta \in \tilde{\mathcal{E}}_1}{\bigcup}\gamma^{-1}(Q_{\eta}),\\
			C_{2}&:=(t-\epsilon,t)\cap \underset{\eta \in \tilde{\mathcal{E}}_2}{\bigcup}\gamma^{-1}(Q_{\eta}).
		\end{align*}
		We observe that $C_{1}$ is closed in $(t-\epsilon,t)$. In fact, if $s_j \in C_{1}$ and $s_j \rightarrow s_{\infty}\in (t-\epsilon,t)$, then by lower semicontinuity of the modulus of the extended gradient, we have $|\eta(\gamma(s_{\infty}))|\le a+\mu$, whence $|\eta(\gamma(s_{\infty}))|\le a+2\mu$, which implies $|\eta(\gamma(s_{\infty}))|= a$, that is $s_{\infty}\in C_1$. Then $C_{2}$ is open and can be written as a finite or countable (possibly empty) union of open intervals. 
		
		Remember that the thesis is equivalent to $\# \tilde{\mathcal{E}}=1$. We now assume that $\# \tilde{\mathcal{E}}\ge 2$ and try to find a contradiction.
		
		\noindent \textbf{\underline{Step 1}}. (Reduction to the case in which $\# \tilde{\mathcal{E}}_2 \ge 1$ and $\# \tilde{\mathcal{E}}_1 \ge 2$).
		
		\noindent We first show that $\# \tilde{\mathcal{E}}_2 \ge 1$. If this were not the case, then we would have $|\eta(\gamma(s))|=a$ for every $s\in (t-\epsilon,t)$ and $\# \tilde{\mathcal{E}}_1 \ge 2$. Then we could find two distinct $\eta, \bar{\eta}$ both belonging to $\tilde{\mathcal{E}}_{1}$ and a time $s\in (t-\epsilon, t)$ such that $x:=\gamma(s)\in Q_{\eta}\cap P_{\bar{\eta}}$. Now, by (\ref{prelimfact}), this would imply that 
		\[|x-\eta|\le |x-\bar{\eta}|-\alpha,\]
		and we reach a contradiction through the following chain of inequalities:
		\[a=|\eta|\le |x-\eta|+\mu \le |x-\bar{\eta}|-\alpha + \mu \le |\bar{\eta}|+2\mu -\alpha = a+2\mu - \alpha \le a-\mu<a.\]
		
		\noindent We now show that $\# \tilde{\mathcal{E}}_1 \ge 2$. Suppose by contradiction that $\bar{\mathcal{E}}_{1}=\left\{\eta\right\}$. Then we can build a better competitor $\tilde{\gamma}$ by performing arbitrarily small perturbations of $\gamma$ in the following way. We choose $s\in (t-\epsilon,t)$ as close to $t$ as we want, such that $\gamma(s)\in Q_{\eta}$. Then we modify $\gamma$ only in the interval $[s,t]$, by replacing it with its projection on the closed convex set $P_{\eta}$. Namely, denoting by $\pi_{\eta}$ the projection on $P_{\eta}$, we define
		\[
		\tilde{\gamma}(u)=\begin{cases}
			\gamma(u) &\text{for $u<s$ or $u>t$},\\
			\pi_{\eta}(\gamma(u))\quad &\text{for $u\in [s,t]$}.
		\end{cases}
		\]
		Now, we clearly have $|\dot{\tilde{\gamma}}|\le |\dot{\gamma}|$, by the $1$-Lipschitz property of $\pi_{\eta}$. On the other hand, for what concerns the potential, in the interval $[s,t]$ it holds
		\begin{align*}
			&|\nabla f(\gamma(u))|=|\nabla f(\tilde{\gamma}(u))| \quad \text{if $u\in C_{1}$},\\
			&|\nabla f(\gamma(u))|>|\nabla f(\tilde{\gamma}(u))| \quad \text{if $u\in C_{2}$}.
		\end{align*} 
		The second expression easily follows from the fact that if $s\in C_{2}$, then $|\eta(\gamma(s))|\ge a+3\mu$. From these estimates, the strict monotonicity of the potential shape $h$, and the local minimality of $\gamma$, we obtain that $C_{2}\cap (s,t)$ must be negligible for the one dimensional Lebesgue measure. But this contradicts the fact that $C_{2}\cap (s,t)$ is open and accumulates in $t$. Thus Step 1 is completed.
		
		To make the point, after Step 1 the situation is the following. There must exist
		\begin{itemize}
			\item Two elements $\eta,\bar{\eta} \in \bar{\mathcal{E}}_{1}$, with $\eta \neq \bar{\eta}$;
			\item Two distinct Voronoi cells $V_H$ and $V_{\bar{H}}$, with $H\subseteq Q_{\eta}$ and $\bar{H}\subseteq Q_{\bar{\eta}}$;
			\item A sequence of open intervals $(s_{\ell}, r_{\ell})\subset (t-\epsilon,t)$ accumulating in $t$ and such that, for every $\ell$, the following conditions hold:
			\begin{itemize}
				\item $r_{\ell}<s_{\ell+1}$;
				\item $\gamma(s_{\ell})\in V_{H}$; 
				\item $\gamma(r_{\ell})\in V_{\bar{H}}$;
				\item $(s_{\ell},r_{\ell})\subset C_{2}$.
			\end{itemize}
		\end{itemize} 
		\noindent \textbf{\underline{Step 2}}. (We have $(r_{\ell}-s_{\ell})=O((t-s_{\ell})^2)$).  
		
		\noindent This will be handled once again by comparison with a suitably chosen competitor. We define, for every $\ell$, the competitor $\gamma_{\ell}$, obtained by modifying $\gamma$ only in the interval $[s_{\ell},t]$, where $\gamma$ is replaced with its projection on the segment $[\gamma(s_{\ell}), 0]$. That is to say, calling $\pi_{\ell}$ the projection on $[\gamma(s_{\ell}), 0]$, we set
		\[
		\gamma_{\ell}(u)=
		\begin{cases}
			\gamma(u)  &\text{for $u<s_{\ell}$ or $u>t$},\\
			\pi_{\ell}(\gamma(u))\quad &\text{for $u\in [s_{\ell},t]$}.
		\end{cases}
		\]  
		Then $|\dot{\gamma}_{\ell}|\le |\dot{\gamma}|$ as before. Regarding the potential part, we have the following estimates, for $u\in (s_{\ell},t)$: 
		\begin{align*}
			&|\nabla f(\gamma(u))|^2 \ge |\eta(\gamma(u))|^2-2|\eta(\gamma(u))||\gamma(u)|-|\gamma(u)|^2\ge |\eta(\gamma(u))|^2-L(2S+r)(t-u),\\
			&|\nabla f(\gamma_{\ell}(u))|^2 \le |\eta(\gamma_{\ell}(u))|^2+2|\eta(\gamma_{\ell}(u))||\gamma_{\ell}(u)|+ |\gamma_{\ell}(u)|^2 \le |\eta(\gamma_{\ell}(u))|^2 +L(2S+r)(t-u),
		\end{align*} 
		where we set $S:=\underset{\eta \in \tilde{\mathcal{E}}}{\max}|\eta|$. We know that
		\begin{align*}
			&|\eta(\gamma(u))|^2 \ge a^2=|\eta(\gamma_{\ell}(u))|^2 \quad \text{for $u\in (s_{\ell},t)$},\\
			&|\eta(\gamma(u))|^2 \ge a^2+9\mu^2 \ge |\eta(\gamma_{\ell}(u))|^2 +9\mu^2 \quad \text{for $u\in (s_{\ell},r_{\ell})$}.
		\end{align*}
		\noindent Thus, after renaming the constant $c_0 := L(2S+r)$, we obtain, for sufficiently large $\ell$:
		\begin{align*}
			0 &\ge \mathcal{F}(\gamma)-\mathcal{F}(\gamma_{\ell})\\
			&\ge \int_{s_{\ell}}^{r_{\ell}}\left\{h(a^2 + 9\mu^{2}-c_{0}(t-u))-h(a^{2}+c_{0}(t-u))\right\}+\int_{r_{\ell}}^{t}\left\{h(a^2 -c_{0}(t-u))-h(a^{2}+c_{0}(t-u))\right\}\\
			&\ge c_{1}(r_{\ell}-s_{\ell})-c_{2}(t-s_{\ell})^{2}.
		\end{align*}
		Here $c_{1}:=h(a^{2}+8\mu^{2})-h(a^{2}+\mu^{2})>0$ and $\ell$ is large enough so that $c_{0}(t-s_{\ell})\le \mu^{2}$. The constant $c_{2}$ is instead defined as 
		\[c_{2}:=c_{0} \Lip\left(h; [a^{2}-\mu^{2}, a^{2}+\mu^{2}]\right)\] 
		By defining $c_{3}:=c_{2}/c_{1}$, we eventually get
		\[(r_{\ell}-s_{\ell})\le c_{3}(t-s_{\ell})^2\quad \text{for $\ell$ large enough}.\]
		
		Notice that $\overline{V_{H}}$ and $\overline{V_{\bar{H}}}$ are two polyhedra whose intersection contains $0$. Possibly replacing them with their intersection with a large $d$-dimensional cube, we can assume that they are polytopes. Then we can apply Lemma \ref{distpol} to deduce the existence of a constant $M>0$ and a sequence of points $x_{\ell}\in \overline{V_{H}}\cap \overline{V_{\bar{H}}}$ such that 
		\[|x_{\ell} - \gamma(s_{\ell})|\le M|\gamma(r_{\ell})-\gamma(s_{\ell})|\le MLc_{3}(t-s_{\ell})^{2}=c_{4}(t-s_{\ell})^2.\]
		\noindent \textbf{\underline{Step 3}}. (A slight deviation of $\gamma$ through $\overline{V_{H}}\cap \overline{V_{\bar{H}}}$ reduces the action, thus contradicting its local minimality).
		
		\noindent We call $\epsilon_{\ell} := c_{4}(t-s_{\ell})$ and assume that $\ell$ is large enough so that $\epsilon_{\ell}\in (0,1)$. We crucially consider the following competitor:
		\[
		\delta_{\ell}(u)=\begin{cases}
			\gamma(u) &\text{for $u<s_{\ell}$ or $u>t$},\\
			\gamma(s_{\ell})+ \frac{x_{\ell}-\gamma(s_{\ell})}{\epsilon_{\ell}(t-s_{\ell})}(u-s_{\ell}) &\text{for $u\in [s_{\ell},s_{\ell}+\epsilon_{\ell}(t-s_{\ell}))$},\\
			x_{\ell}+\frac{-x_{\ell}}{(1-\epsilon_{\ell})(t-s_{\ell})}(u-s_{\ell}-\epsilon_{\ell}(t-s_{\ell}))\quad &\text{for $u\in [s_{\ell}+\epsilon_{\ell}(t-s_{\ell}),t]$}.
		\end{cases}
		\] 
		Notice that in the interval $[s_{\ell}, t]$, the curve $\delta_{\ell}$ is simply a piecewise linear modification of $\gamma$, going from $\gamma(s_{\ell})$ to $x_{\ell}$ in time $\epsilon_{\ell}(t-s_{\ell})$, and then from $x_{\ell}$ to $0$ in the remaining time. We will see that $\delta_{\ell}$ has strictly less action than $\gamma$ for $\ell$ large enough, thus reaching the desired contradiction. We first want to be sure that $\overline{V_{H}}\cap \overline{V_{\bar{H}}}$ is a very low potential zone, so that we can lower the action of $\gamma$ by a slight deviation through it. This can be seen as follows. For sure $\eta$ and $\bar{\eta}$ both belong to $\partial f(x_{\ell})$, thus also $\frac{\eta +\bar{\eta}}{2} \in \partial f(x_{\ell})$. But then
		\[|\eta(x_{\ell})|\le |\eta(x_{\ell})-x_{\ell}|+|x_{\ell}|\le \left|\frac{\eta + \bar{\eta}}{2}-x_{\ell}\right|+|x_{\ell}|\le \left|\frac{\eta + \bar{\eta}}{2}\right|+2|x_{\ell}|=\left(a^2-\left|\frac{\eta-\bar{\eta}}{2}\right|^2\right)^{\frac{1}{2}}+2|x_{\ell}|,\]
		where we used in the very last equality that $|\eta|=|\bar{\eta}|=a$. Therefore, for $\ell$ large enough, we can assume that $x_{\ell} \in B_{r}$, and $|\eta(x_{\ell})|<a$. The last one in particular implies that
		\[|\eta(x)|\le a-3\mu \quad \text{for every $x\in [x_{\ell},0]$ and $\ell$ large enough}.\]
		We can also estimate
		\[|x_{\ell}|^2\le (|\gamma(s_{\ell})|+|x_{\ell}-\gamma(s_{\ell})|)^2\le |\gamma(s_{\ell})|^2+2L\epsilon_{\ell}(t-s_{\ell})^2+\epsilon_{\ell}^2(t-s_{\ell})^2.\] 
		Let us then compare the action of $\delta_{\ell}$ with the one of $\gamma$. We start from the kinetic part:
		\begin{align*}
			&|\dot{\delta}_{\ell}(u)|^2=\left(\frac{|x_{\ell}-\gamma(s_{\ell})|}{\epsilon_{\ell}(t-s_{\ell})}\right)^2\le 1\quad \text{for $u\in [s_{\ell},s_{\ell}+\epsilon_{\ell}(t-s_{\ell}))$},\\
			&|\dot{\delta}_{\ell}(u)|^2=\left(\frac{|x_{\ell}|}{(1-\epsilon_{\ell})(t-s_{\ell})}\right)^2\le \frac{|\gamma(s_{\ell})|^2}{(1-\epsilon_{\ell})^{2}(t-s_{\ell})^2}+\frac{2L\epsilon_{\ell}}{(1-\epsilon_{\ell})^2}+\frac{\epsilon_{\ell}^2}{(1-\epsilon_{\ell})^2}\quad \text{for $u\in [s_{\ell}+\epsilon_{\ell}(t-s_{\ell}),t)$}.
		\end{align*}
		Whence, integrating:
		\begin{align*}
			&\int_{s_{\ell}}^{t}|\dot{\gamma}|^2\ge \frac{|\gamma(s_{\ell})|^2}{t-s_{\ell}},\\
			&\int_{s_{\ell}}^{t}|\dot{\delta}_{\ell}|^2 \le \epsilon_{\ell}(t-s_{\ell})+\frac{|\gamma(s_{\ell})|^2}{(1-\epsilon_{\ell})(t-s_{\ell})}+\frac{2L\epsilon_{\ell}(t-s_{\ell})}{(1-\epsilon_{\ell})}+\frac{\epsilon_{\ell}^{2}(t-s_{\ell})}{(1-\epsilon_{\ell})}=\frac{|\gamma(s_{\ell})|^2}{(1-\epsilon_{\ell})(t-s_l)}+O(\epsilon_{\ell}^{2}).
		\end{align*}
		On the other hand, for the potential part we have
		\begin{align*}
			&|\nabla f(\gamma(u))|^2\ge a^2-c_{0}(t-u)\quad \text{for $u\in [s_{\ell},t)$},\\
			&|\nabla f(\delta_{\ell}(u))|^2\le a^{2}+c_{0}(t-u)\quad \text{for $u\in [s_{\ell},s_{\ell}+\epsilon_{\ell}(t-s_{\ell}))$},\\
			&|\nabla f(\delta_{\ell}(u))|^2\le a^{2}-9\mu^2 + c_{0}(t-u)\quad \text{for $u\in [s_{\ell}+\epsilon_{\ell}(t-s_{\ell}),t)$}.
		\end{align*}
		Integration yields
		\begin{align*}
			&\int_{s_{\ell}}^{t}h(|\nabla f(\gamma)|^2) \ge h(a^{2})(t-s_{\ell})-c_{5}(t-s_{\ell})^2,\\
			&\int_{s_{\ell}}^{t}h(|\nabla f(\delta_{\ell})|^2) \le h(a^{2})(t-s_{\ell})+c_{5}(t-s_{\ell})^2 - c_{6}(1-\epsilon_{\ell})(t-s_{\ell}).
		\end{align*}
		Here $\ell$ is choosen large enough so that $c_{0}(t-s_{\ell})\le \mu^2$. Moreover, we set 
		\[c_{5}:= \frac{c_{0}}{2}\Lip\left(h;[a^{2}-\mu,a^{2}+\mu]\right),\quad  c_{6}:=h(a^{2})-h(a^{2}-8\mu^{2})>0.\]
		Finally, collecting all the estimates together, we obtain
		\begin{align*}
			\mathcal{F}(\gamma)-\mathcal{F}(\delta_{\ell})&\ge \frac{|\gamma(s_{\ell})|^2}{t-s_{\ell}}\left(\frac{-\epsilon_{\ell}}{(1-\epsilon_{\ell})}\right) +c_{6}(1-\epsilon_{\ell})(t-s_{\ell})+O(\epsilon_{\ell}^{2})\\
			&\ge \Bigg(-\frac{L^2}{c_{4}(1-\epsilon_{\ell})}\epsilon_{\ell}+\frac{c_{6}}{c_{4}}(1-\epsilon_{\ell})+O(\epsilon_{\ell})\Bigg)\epsilon_{\ell}.
		\end{align*}
		Now the contradiction comes from the fact that the right term is strictly positive for $\ell$ large enough. This concludes the proof.
	\end{proof}

	\bibliographystyle{plain}
%	\bibliography{MAG}

\end{document}